\numberwithin{equation}{section} 
\newtheorem{theorem}{Theorem}[section]
\newtheorem{lemma}[theorem]{Lemma}
\newtheorem{corollary}[theorem]{Corollary}
\newtheorem{definition}[theorem]{Definition}
\newtheorem{assumption}[theorem]{Assumption}
\numberwithin{equation}{section}
\providecommand{\supp}{\support}
\providecommand{skp}[2]{{\langle{#1},{#2}\rangle}}
\providecommand{\dx}{\,\mathrm{d}x}
\providecommand{\dy}{\,\mathrm{d}y}
\providecommand{\dz}{\,\mathrm{d}z}
\providecommand{\ds}{\,\mathrm{d}s}
\providecommand{\dt}{\,\mathrm{d}t}
\newcounter{formel}
\begin{document}
  \thispagestyle{empty}
\begin{titlepage}
	\ThisTileWallPaper{\paperwidth}{\paperheight}{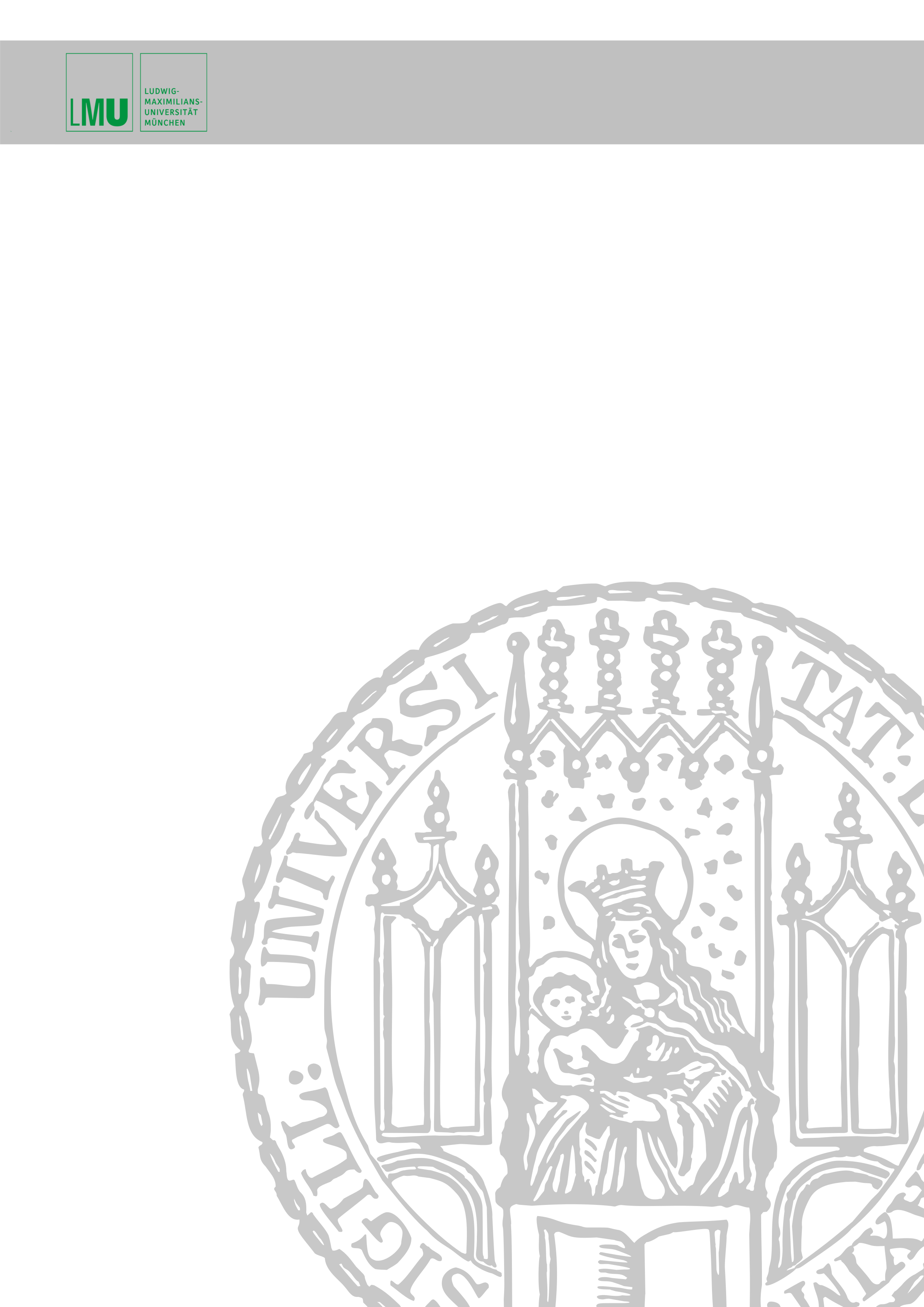}
	\center{	
		\huge{\textbf{The local boundedness of gradients of weak solutions to elliptic and parabolic $\phi$-Laplacian systems}}\\
		\vspace{0.5cm}
		\normalsize{Toni Scharle}\\
		\vspace{1cm}
		\Large{A thesis submitted for the degree of\\
		Master of Science}\\
		\vspace{0.5cm}
		\normalsize{supervised by Prof. Dr. Lars Diening}\\
		\vspace{0.7cm}
		\large{May 2015}\\
		}
		\vspace{\fill}
\end{titlepage}

\newpage

\textbf{Abstract}\\
	In this thesis, a unified approach to prove the boundedness of gradients of solutions to degenerate and singular elliptic and parabolic $\phi$-Laplacian systems is presented. At first, a Cacciopoli-type energy inequality with an additional function $f$ which can  be chosen freely is proven. Then, Di Giorgi's method is applied using level sets  which will lead to $L^\infty$-estimates on the gradient of the weak solution $\nabla \textbf{u}$.
\newpage

\textbf{Acknowledgements}\\
At first I thank my supervisor Prof. Lars Dienig for his support and guidance.\\
I also thank Sebastian Schwarzacher for introducing me to this topic and his support throughout the process of writing of this thesis.\\
Last but not least I am grateful to my parents for their moral and financial support and to my friends and flatmates for making the time of my studies in Munich a truly great time.

\newpage

\tableofcontents
\newpage

\section{Introduction}

In 1900 David Hilbert gave his famous talk \textit{''mathematical problems``}\footnote{"Mathematische Probleme", see \cite{hilbert1900mathematische},translation by the author}  where he described 25 at this moment unsolved problems whose solutions would ''bring an advancement to science`` \footnote{''von deren Behandlung eine F\"orderung der Wissenschaft sich erwarten l\"asst'' see \cite{hilbert1900mathematische}, translation by the author}. \\
The 19th problem reads:\\
\textbf{``Are solutions to regular variational problems always necessarily analytic?''}\\

One subclass of the variational problems Hilbert called regular are those with a given N-function (see section \ref{secNfunctions}) $\phi$ and a domain $\Omega$ where we want to find a function $u\in W^{1,\phi}(\Omega)$ (that means  $\int_\Omega\phi(|\nabla u|)<\infty$)  such that the functional
\begin{equation*}
	\int_\Omega \phi(|\nabla u|)  
\end{equation*}
is minimized under certain boundary conditions.\\
This leads to the elliptic Euler-Lagrange equation (defining $v:=|\nabla u|$)
\begin{equation*}
	\Delta_\phi u:=\text{div}\left(\frac{\phi'(v)}{v}\nabla u\right)=0
\end{equation*}
The best known special case of this is $\phi(t)=t^p$ for $p>1$ where we get the $p$-Laplacian equation:
\begin{equation*}
	\Delta_p u:=\text{div}\left(v^{p-2}\nabla u\right)=0
\end{equation*}

We are now interested in local minimizers of those functionals. This means we are looking for a function $u$ with
$$ \int_{\supp\zeta}\phi(|\nabla u|)\leq\int_{\supp\zeta}\phi(|\nabla u+\nabla\zeta|)$$
for all $\zeta\in C_0^1(\Omega)$. This leads to
$$\int_\omega \frac{\phi'(v)}{v}\nabla u \cdot \nabla \zeta=0 $$
for all $\zeta\in W^{1,\phi}_0(\omega)$ with $\omega\Subset\Omega$.\\
Ennio de Giorgi proved in 1957 (\cite{de1957sulla}) the boundedness of solutions of linear elliptic equations with a truncation method that does not rely on the linearity of the problem and could be easily adopted to prove H\"older continuity of the gradients of those solutions. Independently, Nash got similar results for linear elliptic and parabolic equations in \cite{nash1958continuity} and later Moser proved Harnack estimates for those equations  in \cite{MR0159139}.\\
The boundedness in cases which behave like the $p$-Laplacian equation was given by Uhlenbeck in 1976 in \cite{uhlenbeck1977regularity} in a context of differential forms for $p>2$. The $1<p<2$ case was solved by Acerbi and Fisco in \cite{acerbi1989regularity}. Evans  proved in \cite{evans1982new} qualitative $L^\infty$ bounds by mollification for $p>2$.\\
Marcellini and Papi proved  an estimate on the gradient of solutions to elliptic $\phi$-laplacian systems in \cite{marcellini2006nonlinear}

$$\left(\sup_B v\right)^{2-\beta n}\lesssim \dashint_{2B}\phi(v)+1 $$
where $\beta$ is a $\phi$-dependent constant between $\frac 1 n$ and $\frac 2n$. The restrictions on $\phi$ are so weak that some linear and exponential growth cases are included.\\
Diening, Stroffolini and Verde proved in 2009 (\cite{diening2009everywhere}) under the assumption \ref{mainassumption} which we will also impose on $\phi$ the bound 
$$\sup_B \phi(v)\leq\dashint_{2B} \phi(v)$$ 
which we will get in theorem \ref{statphifinal}. This was further generalized (by substituting assumption \ref{mainassumption} the weaker assumption $c\phi'(t)\leq\phi''(t)t\leq C(1+t)^{\frac \omega 2}\phi'(t)$ for some real $\omega>0$) by Breit, Stroffolini and Verde in \cite{breit2011general}.\\
To get to this point we will use technical tools we develop in section \ref{secNfunctions} to get an energy inequality in section \ref{secStatEnergy}. We will use this to prove the mentioned $L^\infty$-bound with iterated truncations $\chi_{\{v>\gamma\}}$ in section \ref{secStatBound}.\\

We will also look at the parabolic systems. We call a function  $\textbf{u}\in L_{\text{loc}}^\phi(I\times\Omega, \mathbb{R}^m)\cap C_{\text{loc}}(I,L_{\text{loc}}^2(\Omega,\mathbb{R}^m))$ with $v:=|\nabla \textbf{u} |\in L^\phi_{\text{loc}}(I\times\Omega,\mathbb{R})\cap L^2_{\text{loc}}(I\times\Omega,\mathbb{R})$ a local weak solution to $\textbf{u}_t-\Delta_\phi\textbf{u}=0$ on a cylindrical domain $I\times\Omega\subset\mathbb{R}^{1+n}$ if and only if we have for every $\omega\Subset\Omega$ and $[t_1,t_2]\Subset I$

$$\int_{t_1}^{t_2}\int_{\omega} \frac{\phi'(v)}{v} \nabla \textbf{u} \nabla  \boldsymbol{\zeta}\dt\dx=\int_{t_1}^{t_2}\int_{\omega}\textbf{u} \partial_t \boldsymbol{\zeta}\dx\dt-\int_\omega \textbf{u} \boldsymbol{\zeta}\dx\Big\vert_{t_1}^{t_2} $$
for every function $\boldsymbol{\zeta} \in W_{\text{loc}}^{1,2}(I,L_{\text{loc}}^2(\Omega,\mathbb{R}^m))$ with $|\nabla \boldsymbol{\zeta}|\in L_\text{loc}^\phi(I\times\Omega)$.
Equations like this appear for example in the study of non Newtonian fluids and other problems of continuum mechanics. (See \cite{esteban1986equation}.)
For the parabolic $p$-Laplacian systems the most frequently used result is the one obtained by E. DiBenedetto in \cite{dibenedetto1993degenerate} in section VIII.3: If $\textbf{u}$ is a local weak solution to $\textbf{u}_t-\Delta_p\textbf{u}=0$ on a cylinder $I\times\Omega$ he proved, that on a cylinder $Q=J\times B\Subset I\times\Omega$ where $B$ is a ball of radius $R_x$ in $\mathbb{R}^n$ and $J$ an interval of length $R_t=\alpha R_x^2$ and (with $\frac{\nu_r}2=\frac n 2 (p-2)+r$, $r\geq2$):

\begin{align*}
	\sup_Q \frac{v^2}{\alpha}\lesssim &\dashint_{2Q} v^p +\alpha^{\frac{p}{2-p}} \text{ for } p\geq 2\\
	\sup_Q \frac{v^{\frac{\nu_r}{2}}}{\alpha^{\frac{r-p}{2-p}-\frac{n}{2}}}\lesssim&\dashint_{2Q}\frac{v^r}{\alpha^{\frac{p-r}{2-p}}}+\alpha^{\frac{p}{2-p}} \text{ for } p\leq 2
\end{align*}

Although this is an important result, it has a drawback: If the integral on the right hand side tends to zero, the right hand side as a whole is still greater than $\alpha^\frac{p}{2-p}$ and therefore the theorem does not guarantee that if $\norm{v}_p$ and $\norm{v}_2$ go to zero the $L^\infty$ norms also go to zero. The proof itself is not very straightforward and it needs at first a qualitative statement about $v$ being in $L^\infty$ to allow to absorb terms on the left hand side. It starts with the very same Caciopolli-type energy equation we will find in theorem \ref{instatenergy} but uses another function $f$ than we will do. Similar results were obtained earlier by DiBenedetto and Friedman in \cite{dibenedetto1984regularity}.\\
After proving an energy inequality for parabolic $\phi$-Laplacian equation in section \ref{secInstatEnergy} we will get in section \ref{secInstatBound}:

$$\min\left\{\frac {v^{\frac \nu 2}}{\alpha^{\frac{2-n}{n}}},\frac{v^2}{\alpha}\right\}\leq \dashint_{2Q}\frac{v^2}{\alpha}+v^p $$

We see that we do not have to differentiate between the singular and degenerate cases which will allow us to generalize this result to the parabolic $\phi$-Laplacian and whereas DiBenedetto's estimate just provides a constant bound for $v<\alpha^{p-2}$, we just have a switch of exponents. We need $\nu_2>0$ or $p>2-\frac 4 n$ and in this case $r=2$ is the optimal exponent in DiBenedetto's estimate. For larger $r$ there is also an estimate for smaller $p$ provided. Those estimates need a higher integrability for $v$. DiBenedetto's result about the boundedness of gradients of solutions in the case $1<p<2$ was obtained earlier by Choe \cite{choe1991holder}.\\
Acerbi and Mingone proved higher integrability for inhomogeneous $p$-Laplacian systems in \cite{acerbi2007gradient} regaining $\nabla \textbf{u}\in L_{\text{loc}}^q$ if $F\in L^q$ in the inhomogeneity ${\nabla \cdot (|\textbf{F}|^{p-2}\textbf{F})}$.\\

After proving the boundedness of the gradient of parabolic $\phi$-Laplacian systems we could for example apply a result obtained by Liebermann in \cite{lieberman2006holder} where he proved H\"older continuity of gradients of those solutions if there is $L^\infty_{\text{loc}}$ regularity and $\phi$ fulfills assumption \ref{mainassumption}. He proved that, if we have a cylinder $J\times B=:Q\Subset I\times\Omega$ with spacial radius $R_x$, length of $|J|:=R_t=\alpha R_x^2$ and $\alpha=\frac{M}{\phi'(M)}$ and $\sup_Q v\leq M$ we have for a smaller cylinder $Q':=B'\times J'$ with spacial radius $r_x$ and $|J'|=r_t=\alpha r_x^2$ and a positive exponent $\mu$: 

$$\text{osc}_{Q'}|\nabla \textbf{u}|\lesssim M\left(\frac{r_x}{R_x}\right)^\mu $$
This implies H\"older continuity of $\nabla \textbf{u}$. This means, if we have an $\norm{v}_\infty=M_0$ on some cylinder $Q_0$ we have H\"older continuity on all cylinders $Q\subset Q_0$ with $\alpha=\frac{M_0}{\phi'(M_0)}$.

\newpage
\section{N-Functions}\label{secNfunctions}
We use some standard results and definitions from \cite{adams2003sobolev} and \cite{rao1991theory} and start with the definition of an N-Function:
\begin{definition}
	Let $\phi':\mathbb{R}_0^+\rightarrow\mathbb{R}_{0}^+$ be a non-decreasing, left-continuous function with $\phi'(0)=0$, $\phi'(t)>0$ for $t>0$ and $\lim_{t\rightarrow\infty}\phi(t)=\infty$. Then we call the convex function
	$$\phi(t):=\int_0^t \phi'(s) \ds$$
	an N-Function.
\end{definition}
Some common examples are $\phi(t)=t^p$ or $\phi(t)=t \log (t+1)$.\\

Let $\Omega\subset\mathbb{R}^n$ be a domain. The set of measurable functions $\textbf{u}:\Omega\rightarrow \mathbb{R}^m$ with $\int_\Omega \phi(|\textbf{u}|)<\infty$ is called the Orlicz class $L^\phi(\Omega)$. Its span is called Orlicz space  $K^\phi(\Omega)$. On this span we can define the so called Luxemburg norm via
\begin{equation*}
	\norm{\textbf{u}}_\phi=\inf\left\{t>0:\int_\Omega \phi\left(\frac{|\textbf{u}(x)|}{t}\right)\dx \leq 1\right\}
\end{equation*}

\begin{definition}
	For a given N-function we define 
	$$\phi'^{-1}(t)=\inf\{s\geq 0:\phi'(s)>t\} $$
	the complementary N-function via
	$$\phi^\ast(t)=\int_0^t \left(\phi'\right)^{-1}(s)\ds $$
\end{definition}
\noindent It is easy to see that if $\phi$ is strictly increasing, $\phi'^{-1}$ is the true inverse function of $\phi'$.\\
The main reason for this definition is Young's inequality:

$$st\leq \phi(s)+\phi^\ast(t) $$ 
This result is standard and can be found in any textbook about Orlicz spaces, for example \cite{rao1991theory}.\\
With our definition of the Luxemburg norm we also get a H\"older type inequality:

$$\int_\Omega \textbf{f} \textbf{g}\leq 2\norm{\textbf{f}}_\phi\norm{\textbf{g}}_{\phi^\ast} $$

\begin{definition}
 The N-Function $\phi$ is said to fulfill the $\Delta_2$-condition if and only if we have a constant $c$ independent of $t$ such that
 $$\phi(2t)\leq c \phi(t) $$
\end{definition}
As $\phi$ is strictly increasing we can find a constant for every $a>0$ such that $\phi(at)\leq c\phi(t)$ uniformly in $t$. This also implies that the Orlicz-class $L^\phi(\Omega)$ is a vector space and we therefore have $L^\phi(\Omega)=K^\phi(\Omega)$. We will denote the smallest constant $c$ fulfilling $\phi(2t)\leq c\phi(t)$ uniformly in $t$ by $\Delta_2(\phi)$ and for a family of N-Functions $\phi_s$ we will denote $\Delta_2(\{\phi_s\}):=\sup_s\{\Delta_2(\phi_s)\}$.\\ 
If $\Delta_2(\phi)<\infty$ we get
\begin{equation}
	\phi(t)\sim t \phi'(t)
\end{equation}
because of $\frac{\phi(t)}{t}=\frac 1 t\int_0^t\phi'(s)\ds\leq\phi'(t)$ and $\frac{\phi(t)}{t}\geq\frac{\phi(2t)}{t\Delta_2(\phi)}=\frac 1 {t\Delta_2(\phi)} \int_0^{t}\phi'(s)\ds+\frac 1 {t\Delta_2(\phi)} \int_{ t }^{2t}\phi'(s)\ds \geq \frac 1 {\Delta_2(\phi)}\phi'(t)$.\\
If we have $\Delta_2(\phi^\ast)<\infty$, we get 
\begin{align*}
	\phi^\ast(t)\sim t\left(\phi^\ast\right)'(t)=t\left(\phi'\right)^{-1}(t)
\end{align*}
and therefore after setting $t=\phi'(s)$:
\begin{equation}\label{stern}
	\phi^\ast\left(\phi'(s)\right)\sim\phi'(s)s\sim\phi(s)
\end{equation}
If we have $\Delta_2(\{\phi,\phi^\ast\})<\infty$ we also get from Young's inequality, that for every $\epsilon>0$ exists a $c_\epsilon>0$ such that
$$st\leq \epsilon\phi(s)+c_\epsilon\phi^\ast(t) $$

In this thesis we will usually impose a stronger condition than the $\Delta_2$-condition on $\phi$:
\begin{assumption}\label{mainassumption}
	\begin{equation}
		\phi'(t)\sim \phi''(t) t
	\end{equation}
\end{assumption}
We remark that this implies that $\phi$ fulfills the $\Delta_2$-condition.

\begin{definition}
	For a given N-function $\phi$ we define the following functions for $\lambda, t\in\mathbb{R}^+_0$ and $\textbf{Q}\in\mathbb{R}^{n\times m}$:
	\begin{align*}
		\phi_\lambda'(t)&:=\frac{\phi'(\lambda+t)}{\lambda+t}t\\
		\psi'(t)&:=\sqrt{\phi'(t)t}\\
		\textbf{A}(\textbf{Q})&:=\frac{\phi'(|\textbf{Q}|)}{|\textbf{Q}|}\textbf{Q}\\
		\textbf{V}(\textbf{Q})&:=\frac{\psi'(|\textbf{Q}|)}{|\textbf{Q}|}\textbf{Q}
	\end{align*}		
\end{definition}
We will now prove some useful estimates on those quantities.
\begin{theorem}\label{phiestimates}
	With the Definitions as above and $\phi$ with $\Delta_2(\{\phi,\phi^\ast\})<\infty$ fulfilling assumption \ref{mainassumption} we have for all $\textbf{P},\textbf{Q},\textbf{R}\in \mathbb{R}^{n\times m}$:
	\begin{enumerate}
		\item $\partial_{ij}A_{kl}(\textbf{P})=\frac{\phi'(|\textbf{P}|)}{|\textbf{P}|}\left(\tilde{\delta}_{ik}\tilde{\delta}_{jl}-\frac{P_{ij}P_{kl}}{|\textbf{P}|^2}\right)+\phi''(|\textbf{P}|)\frac{P_{ij}P_{kl}}{|\textbf{P}|^2}$ for all $\textbf{P}\in\mathbb{R}^{n\times m}$ where $\tilde{\delta}_{ji}$ is the Kronecker Delta.
		\item $|\textbf{A}(\textbf{P})-\textbf{A}(\textbf{Q})|\lesssim\phi''(|\textbf{P}|+|\textbf{Q}|)|\textbf{P}-\textbf{Q}|$
		\item $\phi''(|\textbf{P}|+|\textbf{Q}|)|\textbf{P}-\textbf{Q}|\sim \phi'_{|\textbf{P}|}(|\textbf{P}-\textbf{Q}|)$
		\item $|\textbf{P}-\textbf{Q}|^2\phi''(|\textbf{P}|+|\textbf{Q}|)\sim \phi_{|\textbf{P}|}(|\textbf{P}-\textbf{Q}|)\sim|\textbf{V}(\textbf{P})-\textbf{V}(\textbf{Q})|^2\sim (\textbf{A}(\textbf{P})-\textbf{A}(\textbf{Q}))(\textbf{P}-\textbf{Q})$
		\item $\phi'_{|\textbf{P}|}(|\textbf{P}-\textbf{Q}|)\lesssim\phi'_{|\textbf{R}|}(|\textbf{P}-\textbf{R}|)+\phi'_{|\textbf{R}|}(|\textbf{Q}-\textbf{R}|)$
	\end{enumerate}
\end{theorem}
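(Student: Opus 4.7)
For (a), the plan is a direct differentiation: writing $A_{kl}(\textbf{P}) = \frac{\phi'(|\textbf{P}|)}{|\textbf{P}|} P_{kl}$, I would apply the product and chain rules together with $\partial_{ij}|\textbf{P}| = P_{ij}/|\textbf{P}|$ and collect the terms to obtain exactly the stated formula. This step is purely mechanical.

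For (b) and the $(\textbf{A}(\textbf{P})-\textbf{A}(\textbf{Q})):(\textbf{P}-\textbf{Q})$-equivalence in (d), my plan is to apply the fundamental theorem of calculus along the segment $\textbf{P}_t := \textbf{Q}+t(\textbf{P}-\textbf{Q})$, so that $\textbf{A}(\textbf{P})-\textbf{A}(\textbf{Q}) = \int_0^1 D\textbf{A}(\textbf{P}_t)(\textbf{P}-\textbf{Q})\,dt$. From (a) and Assumption \ref{mainassumption}, both tensor pieces in $D\textbf{A}(\textbf{R})$ are non-negative with coefficients $\phi'(|\textbf{R}|)/|\textbf{R}|$ and $\phi''(|\textbf{R}|)$ which are equivalent; consequently the operator norm and the smallest eigenvalue of the quadratic form $D\textbf{A}(\textbf{R})$ are both comparable to $\phi''(|\textbf{R}|)$. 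The main technical obstacle is then the averaging estimate $\int_0^1 \phi''(|\textbf{P}_t|)\,dt \sim \phi''(|\textbf{P}|+|\textbf{Q}|)$. I would prove it by splitting the integration at the minimizer $t^\star$ of $t\mapsto |\textbf{P}_t|$ and using the identity $\phi''(s)s^2 \sim \phi(s)$ (which follows from Assumption \ref{mainassumption} combined with $\phi(t)\sim t\phi'(t)$ proved earlier) to estimate each half; both $\Delta_2(\phi)$ and $\Delta_2(\phi^\ast)$ are needed, the former to control the large-argument side and the latter to handle the potential singularity of $\phi''$ near $0$.

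For (c), the definition unfolds as $\phi'_{|\textbf{P}|}(|\textbf{P}-\textbf{Q}|) = \frac{\phi'(|\textbf{P}|+|\textbf{P}-\textbf{Q}|)}{|\textbf{P}|+|\textbf{P}-\textbf{Q}|}|\textbf{P}-\textbf{Q}| \sim \phi''(|\textbf{P}|+|\textbf{P}-\textbf{Q}|)|\textbf{P}-\textbf{Q}|$ by Assumption \ref{mainassumption}, and since $|\textbf{P}|+|\textbf{P}-\textbf{Q}| \sim |\textbf{P}|+|\textbf{Q}|$ (upper bound from the triangle inequality, lower bound from $\max(|\textbf{P}|,|\textbf{Q}|)\ge \tfrac{1}{2}(|\textbf{P}|+|\textbf{Q}|)$), the two-sided $\Delta_2$-behaviour that $\phi''$ inherits from $\phi$ and $\phi^\ast$ finishes the claim. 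For the first $\sim$ in (d), I combine (c) with the analog $\phi_\lambda(t) \sim t\phi'_\lambda(t)$, proved exactly as $\phi(t) \sim t\phi'(t)$ once one notes that $\phi_\lambda$ inherits $\Delta_2$. The $|\textbf{V}(\textbf{P})-\textbf{V}(\textbf{Q})|^2$-equivalence follows after verifying that $\psi$ itself satisfies Assumption \ref{mainassumption}: differentiating $\psi'(t)^2 = \phi'(t)t$ gives $2\psi'\psi'' = \phi''t+\phi' \sim \phi'$, so $\psi''(t)t \sim \psi'(t)$. Applying the (b)/(d) strategy to $\textbf{V}$ in place of $\textbf{A}$ yields $|\textbf{V}(\textbf{P})-\textbf{V}(\textbf{Q})|^2 \sim \psi''(|\textbf{P}|+|\textbf{Q}|)^2 |\textbf{P}-\textbf{Q}|^2$, and the identification $\psi''(s)^2 \sim (\psi'(s)/s)^2 = \phi'(s)/s \sim \phi''(s)$ links this back with the other quantities.

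For (e), the plan is to reduce via (c) to an inequality between $\phi''$-based expressions. Without loss of generality $|\textbf{P}-\textbf{R}| \ge |\textbf{Q}-\textbf{R}|$, so the triangle inequality provides $|\textbf{P}-\textbf{Q}| \le 2|\textbf{P}-\textbf{R}|$ and $|\textbf{P}|+|\textbf{Q}| \le 2|\textbf{R}|+|\textbf{P}-\textbf{R}|+|\textbf{Q}-\textbf{R}| \le 2(|\textbf{R}|+|\textbf{P}-\textbf{R}|)$. The double-sided $\Delta_2$-property $\phi''(2s)\sim \phi''(s)$ (which relies on both $\Delta_2(\phi)$ and $\Delta_2(\phi^\ast)$) then yields $\phi''(|\textbf{P}|+|\textbf{Q}|) \lesssim \phi''(|\textbf{R}|+|\textbf{P}-\textbf{R}|)$, and multiplying by $|\textbf{P}-\textbf{R}|$ and invoking (c) produces the first summand on the right of (e); no new ideas beyond those already deployed are required.
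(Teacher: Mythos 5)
Your plan reproduces the paper's approach for (a), (c), the tensor-algebra part of (d), and the $\psi$-verification, so those parts I will not dwell on. Two points deserve comment.

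First, the key estimate
\begin{equation*}
  \int_0^1 \phi''\bigl(|[\textbf{P},\textbf{Q}]_s|\bigr)\,ds \sim \phi''(|\textbf{P}|+|\textbf{Q}|),
\end{equation*}
which you also correctly identify as the technical heart of (b) and (d), is proved in the paper (Lemma \ref{convexintegral}) by an indirect route: one invokes a result of Kokilashvili--Krbec that $\Delta_2(\phi^\ast)<\infty$ guarantees the existence of $\theta\in(0,1)$ and an N-function $\rho$ with $\phi^\theta\sim\rho$ and $\rho'(t)t\sim\rho(t)$, and then the convexity integral reduces to the easy power-law case $\int_0^1|[\textbf{P},\textbf{Q}]_s|^\beta\,ds\sim(|\textbf{P}|+|\textbf{Q}|)^\beta$. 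You instead propose a direct case analysis, splitting at the minimizer $t^\star$ of $t\mapsto|[\textbf{P},\textbf{Q}]_t|$ and using $\phi''(s)s^2\sim\phi(s)$. This can be made to work and is arguably more elementary (closer to the original Acerbi--Fusco treatment of $t^p$) since it avoids the Kokilashvili--Krbec structural fact; but as written it is too thin. After the change of variables near $t^\star$ the natural output is $\phi'(|\textbf{P}|+|\textbf{Q}|)/|\textbf{P}-\textbf{Q}|$ rather than $\phi''(|\textbf{P}|+|\textbf{Q}|)\sim\phi'(|\textbf{P}|+|\textbf{Q}|)/(|\textbf{P}|+|\textbf{Q}|)$, and these agree only when $|\textbf{P}-\textbf{Q}|\gtrsim|\textbf{P}|+|\textbf{Q}|$; when $|\textbf{P}-\textbf{Q}|$ is small the segment stays away from the origin and one argues separately. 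You should make this dichotomy explicit if you go this route.

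Second, there is a genuine error in your argument for (e). You reduce via (c) to $\phi''$-expressions and claim that $|\textbf{P}|+|\textbf{Q}|\le 2(|\textbf{R}|+|\textbf{P}-\textbf{R}|)$ together with the doubling property $\phi''(2s)\sim\phi''(s)$ yields $\phi''(|\textbf{P}|+|\textbf{Q}|)\lesssim\phi''(|\textbf{R}|+|\textbf{P}-\textbf{R}|)$. Doubling does not upgrade a one-sided size comparison into a one-sided $\phi''$-comparison: if $\phi''$ is decreasing (e.g.\ $\phi(t)=t^p$ with $1<p<2$), then $a\le 2b$ gives the \emph{opposite} inequality $\phi''(a)\gtrsim\phi''(b)$ whenever $a\ll b$. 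Concretely, take $\phi(t)=t^{3/2}$, $\textbf{P}=1$, $\textbf{Q}=3$, $\textbf{R}=100$: then $|\textbf{P}|+|\textbf{Q}|=4$ but $|\textbf{R}|+|\textbf{P}-\textbf{R}|=199$, and $\phi''(4)\approx 0.375$ while $\phi''(199)\approx 0.053$; the ratio can be made arbitrarily large. The conclusion (e) still holds here because the loss in the $\phi''$-factor is more than compensated by the gain in the multiplicative factor $|\textbf{P}-\textbf{R}|$, but your chain of deductions produces the wrong direction at the intermediate step. The paper avoids this by never separating the $\phi''$-factor from its companion: it uses monotonicity of $\phi'_{|\textbf{P}|}$ to pass from $\phi'_{|\textbf{P}|}(|\textbf{P}-\textbf{Q}|)$ to $\phi'_{|\textbf{P}|}(2|\textbf{P}-\textbf{R}|)\sim\phi'_{|\textbf{P}|}(|\textbf{P}-\textbf{R}|)$, and only then exchanges the shift $|\textbf{P}|\leftrightarrow|\textbf{R}|$ using the genuinely two-sided comparability $|\textbf{P}|+|\textbf{P}-\textbf{R}|\sim|\textbf{R}|+|\textbf{P}-\textbf{R}|$. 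You should restructure your proof of (e) along those lines; the correct quantity to track is $\phi'_\lambda(c)$, not $\phi''(\lambda)$ and $c$ separately.
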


\begin{proof}
	\begin{enumerate}
		\item We use $\partial_{ij}P_{kl}=\tilde{\delta}_{ik}\tilde{\delta}_{jl}$ and $\partial_{ij}|\textbf{P}|=\frac{P_{ij}}{|\textbf{P}|}$
		$$\partial_{ij}\left(\frac{\phi'(|\textbf{P}|)}{|\textbf{P}|}P_{kl}\right)= \frac{\phi'(|\textbf{P}|)}{|\textbf{P}|}\tilde{\delta}_{ik}\tilde{\delta}_{jl}+\frac{\phi''(|\textbf{P}|)}{|\textbf{P}|}\frac{P_{ij}}{|\textbf{P}|}P_{kl}-\frac{\phi'(|\textbf{P}|)}{|P|^2}\frac{P_{ij}}{|\textbf{P}|}P_{kl}$$

		\item Define the convex combination $[\textbf{P},\textbf{Q}]_s:=(s\textbf{P}+(1-s)\textbf{Q})$ and  estimate
		\begin{align*}
			 |\textbf{A}(\textbf{P})-\textbf{A}(\textbf{Q})|&=\left\vert\int_0^1 (\nabla \textbf{A})([\textbf{P},\textbf{Q}]_s)(\textbf{P}-\textbf{Q})\ds \right\vert \\
			&\lesssim\int_0^1\frac{\phi'(|[\textbf{P},\textbf{Q}]_s|)}{|[\textbf{P},\textbf{Q}]_s|}\ds|\textbf{P}-\textbf{Q}|\\			&\lesssim\frac{\phi'(|\textbf{P}|+|\textbf{Q}|)}{|\textbf{P}|+|\textbf{Q}|}|\textbf{P}-\textbf{Q}|\\
			&\lesssim\phi''(|\textbf{P}|+|\textbf{Q}|)|\textbf{P}-\textbf{Q}|
		\end{align*}
			The inequality $\int_0^1\frac{\phi'(|[\textbf{P},\textbf{Q}]_s|)}{|[\textbf{P},\textbf{Q}]_s|}\ds\lesssim\frac{\phi'(|\textbf{P}|+|\textbf{Q}|)}{|\textbf{P}|+|\textbf{Q}|}$ is proven in the appendix in lemma \ref{convexintegral}.
		\item We have
		\begin{align*}
			&\phi''(|\textbf{P}|+|\textbf{Q}|)|\textbf{P}-\textbf{Q}|\sim\frac{\phi'(|\textbf{P}|+|\textbf{Q}|)}{|\textbf{P}|+|\textbf{Q}|}|\textbf{P}-\textbf{Q}|\\
			\sim &\frac{\phi'(|\textbf{P}|+|\textbf{P}-\textbf{Q}|)}{|\textbf{P}|+|\textbf{P}-\textbf{Q}|}|\textbf{P}-\textbf{Q}|\sim\phi'_{|\textbf{P}|}(|\textbf{P}-\textbf{Q}|)
		\end{align*}
		where we used the assumption \ref{mainassumption} on $\phi$, the $\Delta_2$-condition and the fact that $|\textbf{P}|+|\textbf{Q}|\sim|\textbf{P}|+|\textbf{P}-\textbf{Q}|$ via ${|\textbf{P}|+|\textbf{Q}|}{=|\textbf{P}|+|\textbf{Q}-\textbf{P}+\textbf{P}|}\leq2 |\textbf{P}|+|\textbf{Q}-\textbf{P}|$ and $|\textbf{P}|+|\textbf{P}-\textbf{Q}|\leq 2|\textbf{P}|+|\textbf{Q}|$.\\
		\item The first similarity follows directly from point (c) and $\phi'(t)t\sim\phi(t)$\\
		For the second similarity we first note that the N-function $\psi$ fulfills assumption \ref{mainassumption} and that we have $\psi''(t)\sim\sqrt{\phi''(t)}$. (Both facts are proven in the appendix in lemma \ref{psi}.) This means we can replace $\phi$ by $\psi$ and $\textbf{A}$ by $\textbf{V}$ in the proof of part (b) and get
		
		$$|\textbf{V}(\textbf{P})-\bf{\textbf{V}}(\textbf{Q})|^2\sim|\textbf{P}-\textbf{Q}|^2\left(\psi''(|\textbf{P}|+|\textbf{Q}|)\right)^2\sim|\textbf{P}-\textbf{Q}|^2\phi''(|\textbf{P}|+|\textbf{Q}|) $$
		
		For the third similarity we use the the compatibility of Frobenius-Norm with Matrix multiplication and point (b) to get:
		\begin{align*}
			|(\textbf{A}(\textbf{P})-\textbf{A}(\textbf{Q}))(\textbf{P}-\textbf{Q})|\leq&|\textbf{A}(\textbf{P})-\textbf{A}(\textbf{Q})|\,|\textbf{P}-\textbf{Q}|\\
			\lesssim&\phi''(|\textbf{P}|+|\textbf{Q}|)|\textbf{P}-\textbf{Q}|^2
		\end{align*}
		
For the other direction we first note that we get for every $\textbf{P},\textbf{B}\in\mathbb{R}^{n\times m}$:
\begin{align*}
	B_{ij}&\left(\partial_{ij}A_{kl}\right)(P)B_{kl}=\frac{\phi'(|\textbf{P}|)}{|\textbf{P}|}\left(|\textbf{B}|^2-\frac{|\textbf{P}\cdot \textbf{B}|^2}{|\textbf{P}|^2}\right)+\phi''(|\textbf{P}|)\frac{|\textbf{P}\cdot \textbf{B}|^2}{|\textbf{P}|^2}\\
	\geq& c\phi''(|\textbf{P}|)\left(|\textbf{B}|^2-\frac{|\textbf{P}\cdot \textbf{B}|^2}{|P|^2}\right)+\phi''(|\textbf{P}|)\frac{|\textbf{\textbf{P}}\cdot \textbf{B}|^2}{|\textbf{P}|^2}\\
	=&(c-\epsilon)\phi''(|\textbf{P}|)\left(|\textbf{B}|^2-\frac{|\textbf{P}\cdot \textbf{B}|^2}{|\textbf{P}|^2}\right)+\epsilon\phi''(|\textbf{P}|)|\textbf{B}|^2\\
	&+(1-\epsilon)\phi''(|\textbf{P}|)\frac{|\textbf{P}\cdot \textbf{B}|^2}{|\textbf{P}|^2}\\
	\geq&\epsilon\phi''(|\textbf{P}|)|\textbf{B}|^2
\end{align*}
where we used point (a) and took $c\in\mathbb{R}^+$ such that $\frac{\phi'(t)}{t}\geq c\phi''(t)$ and $0<\epsilon\leq\min\{1,c\}$.\\
We then estimate $	(\textbf{A}(\textbf{P})-\textbf{A}(\textbf{Q}))(\textbf{P}-\textbf{Q})$ using \ref{convexintegral} and the fact that $\phi$ fulfills assumption \ref{mainassumption}:
\begin{align*}
	(\textbf{A}(\textbf{P})-\textbf{A}(\textbf{Q}))(\textbf{P}-\textbf{Q})=&\int_0^1(\nabla \textbf{A})([\textbf{P},\textbf{S}]_s)(\textbf{P}-\textbf{Q})(\textbf{P}-\textbf{Q})\ds\\
	\gtrsim&\int_0^1\phi''(|[\textbf{P},\textbf{S}]_s)\ds|\textbf{P}-\textbf{Q}|^2\\
	\sim&\phi''(|\textbf{P}|+|\textbf{S}|)|\textbf{P}-\textbf{Q}|^2
\end{align*}

		\item Let us at first assume that $|\textbf{Q}-\textbf{R}|\leq|\textbf{P}-\textbf{R}| $ and therefore $|\textbf{P}-\textbf{Q}|\leq |\textbf{P}-\textbf{R}+\textbf{R}-\textbf{Q}|\leq|\textbf{P}-\textbf{R}|+|\textbf{Q}-\textbf{R}|\leq 2|\textbf{P}-\textbf{R}|$. We also recall that $\Delta_2(\phi_\lambda)$ is bound uniformly in $\lambda$ as proven in lemma \ref{shiftedDelta2} and we therefore get $\phi'_\lambda(2s)\sim\phi'_\lambda(t)$ uniformly in $t$ and $\lambda$. Then we have
		\begin{align*}
			\phi_{|\textbf{P}|}'(|\textbf{P}-\textbf{Q}|)&\leq\phi_{|\textbf{P}|}'(2|\textbf{P}-\textbf{R}|)\\
			&\sim\phi_{|\textbf{P}|}'(|\textbf{P}-\textbf{R}|)\\
			&=\frac{\phi'(|\textbf{P}-\textbf{R}|+|\textbf{P}|)}{|\textbf{P}-\textbf{R}|+|\textbf{P}|}|\textbf{P}-\textbf{R}|\\
			&\sim\frac{\phi'(|\textbf{P}-\textbf{R}|+|\textbf{R}|)}{|\textbf{P}-\textbf{R}|+|\textbf{R}|}|\textbf{P}-\textbf{R}|\\
			&=\phi'_{|\textbf{R}|}(|\textbf{P}-\textbf{R}|)\\
			&\leq \phi'_{|\textbf{R}|}(|\textbf{P}-\textbf{R}|)+\phi'_{|\textbf{R}|}(|\textbf{Q}-\textbf{R}|)
		\end{align*}
		where we used $|\textbf{P}|+|\textbf{P}-\textbf{Q}|=|\textbf{P}-\textbf{Q}+\textbf{Q}|+|\textbf{P}-\textbf{Q}|<2(|\textbf{Q}|+|\textbf{P}-\textbf{Q}|)$ and therefore $|\textbf{P}|+|\textbf{P}-\textbf{Q}|\sim|\textbf{Q}|+|\textbf{P}-\textbf{Q}| $. As we have $\frac{\phi'(|\textbf{P}-\textbf{Q}|+|\textbf{P}|)}{|\textbf{P}-\textbf{Q}|+|\textbf{P}|}|\textbf{P}-\textbf{Q}|\sim\frac{\phi'(|\textbf{P}-\textbf{Q}|+|\textbf{Q}|)}{|\textbf{P}-\textbf{Q}|+|\textbf{Q}|}|\textbf{P}-\textbf{Q}|$ like in the 4th step we can interchange the roles of $|\textbf{P}|$ and $|\textbf{Q}|$.
	\end{enumerate}
\end{proof}

\newpage
\section{Energy estimates}
\subsection{The elliptic case}\label{secStatEnergy}
The main result of this section is the following theorem.
\begin{theorem}[Energy estimate for the elliptic case]\label{statenergy}
 Let $\phi$ be an N-function with $\Delta_2(\{\phi,\phi^\ast\})<\infty$ satisfying the assumption \ref{mainassumption} and let $\textbf{u}\in W^{1,\phi}_{loc}(\Omega,\mathbb{R}^m)$ be a local weak solution to 
 $$\Delta_\phi \textbf{u}=0 $$
 on a domain $\Omega\subset\mathbb{R}^n$ and let $f:\mathbb{R}^+_0\rightarrow\mathbb{R}$ be a non-decreasing, non-negative, bounded, piecewise continuously differentiable function which is constant for large arguments. Define $\textbf{V}(\textbf{Q})=\frac{\sqrt{\phi'(|\textbf{Q}|)}}{|\textbf{Q}|}\textbf{Q}$ as above and denote $v=|\nabla \textbf{u}|$ and let $B\Subset \Omega $ be a ball of radius $R$ and $\eta$ a $C^\infty_0(B)$ function with $0\leq\eta\leq1$.\\
 Then we get for some $q>2$:
 \begin{equation}
  \dashint_B |\nabla \textbf{V}(\nabla \textbf{u})|^2\eta^qf(v)\lesssim\dashint_B\phi(v)|\nabla\eta|^2f(v)
 \end{equation}
\end{theorem}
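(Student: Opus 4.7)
The plan is to differentiate the weak equation in an arbitrary direction $e_\alpha$ and then test with $\zeta_k = \eta^q f(v) \partial_\alpha u_k$, summed over $\alpha$. Since a priori $\nabla \textbf{u}$ lies only in $L^\phi$, I would first perform this argument at the level of Nirenberg difference quotients $\Delta^h_\alpha$, obtaining uniform-in-$h$ bounds that yield $\textbf{V}(\nabla \textbf{u}) \in W^{1,2}_{\mathrm{loc}}$ a posteriori, and then pass to the limit $h \to 0$; the computation below is the formal limit. Writing the differentiated weak equation as
\[
\sum_{l} \int (\partial_{ij}A_{kl})(\nabla \textbf{u})\, \partial_\alpha \partial_j u_i\, \partial_l \zeta_k = 0
\]
and expanding $\partial_l[\eta^q f(v)\partial_\alpha u_k]$ into its three contributions (from $\nabla \eta$, from $f'(v)\nabla v$, and from $\partial_l \partial_\alpha u_k$), summation over $\alpha, k, i, j, l$ produces an identity $\mathrm{(I)} + \mathrm{(II)} + \mathrm{(III)} = 0$, where $\mathrm{(I)}$ is the main coercive term, $\mathrm{(II)}$ the $\nabla \eta$ error and $\mathrm{(III)}$ the $f'(v)$ term.

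The main term $\mathrm{(I)} = \sum_\alpha \int \eta^q f(v)(\partial_{ij}A_{kl})(\nabla \textbf{u})\,\partial_\alpha \partial_j u_i\,\partial_\alpha \partial_l u_k$ is controlled from below by the pointwise coercivity proved inside theorem \ref{phiestimates}(d), namely $(\partial_{ij}A_{kl})(\textbf{P})\, B_{ij}B_{kl} \gtrsim \phi''(|\textbf{P}|)|\textbf{B}|^2$. Applied with $\textbf{B} = \partial_\alpha \nabla \textbf{u}$ and summed over $\alpha$, this gives $\mathrm{(I)} \gtrsim \int \eta^q f(v)\phi''(v)|\nabla^2 \textbf{u}|^2$, which by a further application of theorem \ref{phiestimates}(d) (yielding the pointwise equivalence $|\nabla \textbf{V}(\nabla \textbf{u})|^2 \sim \phi''(v)|\nabla^2 \textbf{u}|^2$) is equivalent to $\int \eta^q f(v)|\nabla \textbf{V}(\nabla \textbf{u})|^2$. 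The error term $\mathrm{(II)}$ is estimated via the upper bound $|\partial^2 \textbf{A}(\textbf{P})|\lesssim \phi''(|\textbf{P}|)$ (which follows from theorem \ref{phiestimates}(a) together with assumption \ref{mainassumption}), the Cauchy--Schwarz inequality, and Young's inequality with a small parameter $\epsilon$: it is bounded by $\epsilon \int \eta^q f(v)\phi''(v)|\nabla^2 \textbf{u}|^2 + C_\epsilon \int \eta^{q-2}|\nabla \eta|^2 f(v)\phi''(v)v^2$. Since $\phi''(v)v^2 \sim \phi(v)$ (from assumption \ref{mainassumption} combined with $\phi'(t)t\sim \phi(t)$) and $\eta \leq 1$ with $q \geq 2$, this delivers precisely the desired right-hand side, and the $\epsilon$-piece is absorbed into $\mathrm{(I)}$.

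The main obstacle is the $f'(v)$ contribution $\mathrm{(III)}$, which in the vectorial setting is not manifestly signed: unlike the scalar $p$-Laplacian case, the factor $\partial_\alpha u_k$ does not collapse cleanly against $\partial_\alpha \partial_j u_i$. I would handle it by substituting the explicit formula for $\partial_{ij}A_{kl}$ from theorem \ref{phiestimates}(a) and then using the identity $v\partial_l v = \sum_{\alpha,m}\partial_\alpha u_m\,\partial_l \partial_\alpha u_m$ to collapse the interior sums. A direct computation reduces the integrand of $\mathrm{(III)}$ to
\[
2\eta^q f'(v)\left[\phi'(v)\,v\,|\nabla v|^2 + \Bigl(\phi''(v) - \tfrac{\phi'(v)}{v}\Bigr)\|(\nabla \textbf{u})\nabla v\|^2\right],
\]
where $(\nabla \textbf{u})\nabla v$ is the matrix-vector product. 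Since $\|(\nabla \textbf{u})\nabla v\|^2 \leq v^2|\nabla v|^2$ by Cauchy--Schwarz, the bracket is bounded below by $v^2\phi''(v)|\nabla v|^2 \geq 0$ whichever sign $\phi''(v) - \phi'(v)/v$ takes, and $f' \geq 0$ since $f$ is non-decreasing. Hence $\mathrm{(III)} \geq 0$ and may be dropped from the identity $\mathrm{(I)} + \mathrm{(II)} + \mathrm{(III)} = 0$, leaving $\mathrm{(I)} \leq |\mathrm{(II)}|$, which combined with the bounds above produces the claimed energy inequality.
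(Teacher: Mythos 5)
Your decomposition into a coercive main term, a $\nabla\eta$ error term, and an $f'$ term with favorable sign is precisely the paper's decomposition (the paper labels them $\mathrm{II}$, $\mathrm{III}$, $\mathrm{I}$ respectively), and the crucial observation that the $f'$ term is non-negative via the explicit Hessian formula of theorem \ref{phiestimates}(a) together with Cauchy--Schwarz is identical. The only differences are technical choices in the details: you present the argument at the formal limit of the difference quotients (which the paper also does when computing $\lim\mathrm{I}$, explicitly noting it would need $\nabla\textbf{u}\in W^{2,1}_{\text{loc}}$ or a shifted-$\phi_\lambda$ regularisation to justify), and you bound the $\nabla\eta$ error with a scalar Cauchy--Schwarz plus quadratic Young's inequality in the factors $\eta^{q/2}\sqrt{f(v)\phi''(v)}\,|\nabla^2\textbf{u}|$ and $\eta^{q/2-1}|\nabla\eta|\sqrt{f(v)\phi''(v)}\,v$, which is somewhat more elementary than the paper's shifted-$N$-function Young's inequality but lands on the same right-hand side after $\phi''(v)v^2\sim\phi(v)$.
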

Before we prove this we restrict the choice of $f$. 
\begin{lemma}\label{statenergylemma}
 The assertion of theorem \ref{statenergy} holds with the additional assumption $f\in C^1$ with $f'(t)\geq 0$ and $f'(t)=0$ for $t$ large enough.
\end{lemma}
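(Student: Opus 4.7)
The plan is to differentiate the weak equation in every spatial direction and then test with a vector field whose main part is a directional derivative of $\textbf{u}$. Replacing the test function in $\int \textbf{A}(\nabla \textbf{u}):\nabla\boldsymbol{\xi}=0$ by $\partial_k\boldsymbol{\xi}$ and integrating by parts (justified rigorously through difference quotients, exploiting that assumption \ref{mainassumption} ensures $\textbf{u}\in W^{2,2}_{\mathrm{loc}}$) produces $\int \partial_k\textbf{A}(\nabla \textbf{u}):\nabla\boldsymbol{\xi}=0$ for every $k$. I would then pick $\boldsymbol{\xi}=\eta^{q}f(v)\partial_k \textbf{u}$, sum over $k$, and split $\nabla\boldsymbol{\xi}$ by the product rule to obtain the identity $T_1+T_2+T_3=0$ with
\begin{align*}
    T_1 &= \int \eta^{q}f(v)\sum_{k}(\nabla \textbf{A})(\nabla \textbf{u})[\partial_k\nabla \textbf{u},\partial_k\nabla \textbf{u}],\\
    T_2 &= q\int \eta^{q-1}f(v)\sum_{k}(\nabla \textbf{A})(\nabla \textbf{u})[\partial_k\nabla \textbf{u},\nabla\eta\otimes\partial_k \textbf{u}],\\
    T_3 &= \int \eta^{q}f'(v)\sum_{k}(\nabla \textbf{A})(\nabla \textbf{u})[\partial_k\nabla \textbf{u},\nabla v\otimes\partial_k \textbf{u}].
\end{align*}

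The first term $T_1$ is coercive: the ellipticity estimate inside the proof of theorem \ref{phiestimates}(d) gives the pointwise bound $(\nabla \textbf{A})(\textbf{P})[\textbf{B},\textbf{B}]\geq c\,\phi''(|\textbf{P}|)|\textbf{B}|^{2}$, and together with $\psi''(t)\sim\sqrt{\phi''(t)}$ (lemma \ref{psi}) and the chain rule applied to $\textbf{V}$ this provides $T_1\gtrsim \int \eta^{q}f(v)|\nabla \textbf{V}(\nabla \textbf{u})|^{2}$. The cross term $T_2$ is the standard Cacciopoli error: theorem \ref{phiestimates}(a) together with assumption \ref{mainassumption} yields $|(\nabla \textbf{A})(\textbf{P})|\lesssim \phi''(|\textbf{P}|)$, and Cauchy--Schwarz in $k$ gives $|T_2|\lesssim \int \eta^{q-1}f(v)\phi''(v)|\nabla^{2}\textbf{u}|\,v\,|\nabla\eta|$. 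Young's inequality with a small $\delta$, the relation $\phi''(v)v^{2}\sim\phi(v)$, and $\eta^{q-2}\leq 1$ (any $q\geq 2$ suffices, so one may fix some $q>2$) then yield $|T_2|\leq \delta T_1+C_\delta\int \phi(v)f(v)|\nabla\eta|^{2}$.

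The key step, and the main obstacle, is to show that $T_3\geq 0$ so that it can simply be discarded from the identity. Using the explicit expression for $\partial_{ij}A_{kl}$ from theorem \ref{phiestimates}(a), the integrand of $T_3$ equals $\eta^{q}f'(v)$ times
\begin{equation*}
    \alpha\sum_{k}\partial_k\nabla \textbf{u}:(\nabla v\otimes\partial_k \textbf{u})+\frac{\beta}{v^{2}}\sum_{k}(\nabla \textbf{u}:\partial_k\nabla \textbf{u})\,(\nabla \textbf{u}:(\nabla v\otimes\partial_k \textbf{u})),
\end{equation*}
with $\alpha=\phi'(v)/v$ and $\beta=\phi''(v)-\alpha$. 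Using the algebraic identities $\sum_{k,l}\partial_i\partial_k u_l\,\partial_k u_l=v\partial_i v$ and $\sum_{n,m}\partial_n u_m\,\partial_n\partial_k u_m=v\partial_k v$ this reduces to $\alpha v|\nabla v|^{2}+\tfrac{\beta}{v}|(\nabla \textbf{u})^{T}\nabla v|^{2}$. If $\beta\geq 0$ both summands are non-negative; if $\beta<0$, the Cauchy--Schwarz bound $|(\nabla \textbf{u})^{T}\nabla v|^{2}\leq v^{2}|\nabla v|^{2}$ together with $\alpha+\beta=\phi''(v)$ shows that the total is still at least $\phi''(v)v|\nabla v|^{2}\geq 0$. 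Combined with $f'\geq 0$ this gives $T_3\geq 0$. Feeding everything into $T_1+T_2+T_3=0$ yields $T_1\leq |T_2|$, and absorbing $\delta T_1$ followed by division by $|B|$ produces the claimed inequality.
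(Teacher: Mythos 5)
Your proposal is correct and follows essentially the same route as the paper: the paper's terms $\text{I}_j$, $\text{II}_j$, $\text{III}_j$ are exactly your $T_3$, $T_1$, $T_2$, and the sign argument for the $f'(v)$ term (discarding it after a Cauchy--Schwarz reduction to $\phi''(v)v|\nabla v|^2\geq 0$) is precisely how the paper handles its term $\text{I}$. The only difference is presentational: the paper runs the entire argument at the level of difference quotients $\delta_{j,h}$, using the shifted N-functions $\phi_\lambda$ and Young's inequality for complementary N-functions to estimate the cross term, and only then passes to the limit $h\to0$ (explicitly assuming the higher regularity needed for that limit); you instead differentiate the equation formally and estimate the cross term with a plain pointwise Young's inequality plus $\phi''(v)v^2\sim\phi(v)$, deferring to difference quotients for rigor. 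Since the paper itself concedes that the needed $\nabla\textbf{u}\in W^{2,1}_{\mathrm{loc}}$ regularity is being assumed for clarity, the two versions are on the same footing.
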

\begin{proof}
 We denote $(\tau_{j,h} \textbf{g})(x):=\textbf{g}(x+he_j)-\textbf{g}(x)$, ${(\delta_{j,h} \textbf{g})(x)=\frac 1 h (\tau_{j,h}\textbf{g})(x)}$ and $\delta_h \textbf{g}:=\sum_{j=1}^n{(\delta_{j,h}\textbf{g})e_j}$ and take a $C^\infty_0$ function $\eta$ with $\supp\, \eta \subset B$ and $0\leq\eta\leq 1$. We use the test function ${\zeta:=\delta_{j,-h}(f(|\delta_h \textbf{u}|))\delta_{j,h}\textbf{u}\,\eta^q}$ where we chose $q>2$ such that $\phi(\eta^{q-1}t)\leq\eta^q\phi(t)$ which is possible because of lemma \ref{rausziehen} and we note that $q$ only depends on $\phi$ and not on $\eta$. We get
 \begin{align}
  0=&\langle \textbf{A}(\nabla \textbf{u}),\nabla (\delta_{j,-h}(f(|\delta_h \textbf{u}|)\delta_{j,h}\textbf{u}\,\eta^q))\rangle=	\langle\delta_{j,h}\textbf{A}(\nabla \textbf{u}),\nabla(f(|\delta_h \textbf{u}|)\delta_{j,h}\textbf{u}\,\eta^q)\nonumber\\
	=&\langle \delta_{j,h}\textbf{A}(\nabla \textbf{u}),f'(|\delta_h \textbf{u}|)\nabla|\delta_h \textbf{u}|\delta_{j,h}\textbf{u}\,\eta^q\rangle+\langle \delta_{j,h}\textbf{A}(\nabla \textbf{u}),f(|	\delta_h \textbf{u}|)\delta_{j,h}\nabla \textbf{u}\rangle\nonumber\\
	&+\langle \delta_{j,h}\textbf{A}(\nabla \textbf{u}),f(|\delta_h \textbf{u}|)\delta_{j,h}\textbf{u}\, q\eta^{q-1}\nabla \eta\rangle\nonumber\\
	=:&\text{I}_j+\text{II}_j+\text{III}_j\label{all}
\end{align}
We will at first look at $\text{I}_j$ in \ref{all}. We note  that $|\delta_{j,h} \textbf{u}|f'(|\delta_h \textbf{u}|)\leq|\delta_h \textbf{u}|f'(|\delta_h \textbf{u}|)$ is bounded uniformly in $h$ because of $f'(t)=0$ for large $t$. For the integrand of $\text{I}_j$ this gives
\begin{align}
	&|\delta_{j,h}\textbf{A}(\nabla \textbf{u}) f'(|\delta_h \textbf{u}|)\nabla|\delta_h \textbf{u}|\delta_{j,h}\textbf{u}\eta^q|\nonumber\\
	\leq& |\delta_{j,h}\textbf{A}(\nabla \textbf{u})|\, |\nabla|\delta_h \textbf{u}||\,|f'(|\delta_h \textbf{u}|)\delta_{j,h}\textbf{u}|\nonumber\\
	\lesssim &\frac 1 {h^2}|\tau_{j,h}\textbf{A}(\nabla \textbf{u})|\,|\tau_h\nabla \textbf{u}|\label{Iestimate1}
\end{align}
We now use \ref{phiestimates} (b) and (c)
\begin{align}
	|(\tau_{j,h}\textbf{A})(x)|&=|\textbf{A}((\nabla \textbf{u})(x+h))-\textbf{A}((\nabla \textbf{u})(x))|\nonumber\\
	&\lesssim\phi''(|(\nabla \textbf{u})(x+h)|+|(\nabla \textbf{u})(x)|)|(\tau_{j,h}\nabla \textbf{u})(x)|\nonumber\\
	&\sim\phi'_{|\nabla \textbf{u}|}(|(\tau_{j,h}\nabla \textbf{u})(x)|)\label{tauAestimate}
\end{align}
Using this we return to \ref{Iestimate1} and denote $\max_{j=1,2...,n}|\tau_{j,h}\nabla \textbf{u}|=|\tau_{j_0,h}\nabla \textbf{u}|$ and note that for $n<\infty$ all $p$-norms of $\mathbb{R}^n$ including the supremum norm are equivalent and estimate using the fact that $\phi'_{|\nabla \textbf{u}|}$ is increasing and \ref{phiestimates} (d):
\begin{align}
	\frac 1 {h^2}|(\tau_{j,h}\textbf{A})(x)|\,|\tau_h\nabla \textbf{u}|\sim &\frac 1 {h^2}\phi'_{|\nabla \textbf{u}|}(|(\tau_{j,h}\nabla \textbf{u})(x)|)\,|\tau_h\nabla \textbf{u}|\nonumber\\
	\lesssim &\frac 1 {h^2}\phi'_{|\nabla \textbf{u}|}(|(\tau_{j_0,h}\nabla \textbf{u})(x)|)\,|\tau_{h,j_0}\nabla \textbf{u}|\nonumber\\
	\sim& \frac 1 {h^2}\phi_{|\nabla \textbf{u}|}(|(\tau_{j_0,h}\nabla \textbf{u})(x)|)\nonumber\\
	\sim& \frac 1 {h^2}|\tau_{j_0,h}\textbf{V}(\nabla \textbf{u})(x)|^2\nonumber\\
	\sim& |\delta_h \textbf{V}(\nabla \textbf{u})(x)|^2
\end{align}
As $h\rightarrow 0$, this goes to $|\nabla \textbf{V}(\nabla \textbf{u})|^2$ in $L^2(B)$ since $\textbf{V}(\nabla \textbf{u})\in W^{1,2}_{\text{loc}}(\Omega)$ as proven in Theorem \ref{stattheorem1}. This means we can use a generalized version of the theorem of dominated convergence of Lebesgue which says that if $f_n\rightarrow f$ pointwise almost everywhere and $|f_n|<g_n$ for an $L^1$ convergent sequence $g_n$ we have $\int f_n\rightarrow \int f$.\\
We now need $\delta_{k,h} v\rightarrow \partial_k v$, $\delta_{j,h} (A_{ki}(\nabla u))\rightarrow\partial_{lp}A_{ki}(\nabla \textbf{u})\partial_j\partial_l u_p$ and $\delta_{j,h} u_i\rightarrow\partial_j u_i$. This would be implied by $\nabla \textbf{u} \in W_{\text{loc}}^{2,1}(\Omega)$. It would be possible to show this for a shifted N-function $\phi_\lambda$ with $\lambda>0$ and then we'd have to take the limit $\lambda\rightarrow 0$ in the end like in \cite{diening2009everywhere}. For the sake of clarity and simplicity we will just assume this here. This gives (using the Einstein summation convention and writing $\tilde{\delta}_{ij}$ for the Kronecker-Delta and after a summation over $j$):
\begin{align*}
	\text{I}:=&\sum_{j=1}^m\text{I}_j=\int_B\delta_k v \delta_j\left(A_{ki}(\nabla \textbf{u})\right)\delta_j u_i f'(|\delta_h \textbf{u}|)\dx\\
	\rightarrow&\int_B\partial_k v \left(\partial_{lp}A_{ki}\right)(\nabla \textbf{u})\partial_j\partial_l u_p \partial_j u_i f'(v)\dx\\
	=&\int_B\partial_k v \left(\frac{\phi'(v)}{v}\left(\tilde{\delta}_{lk}\tilde{\delta}_{pi}-\frac{\partial_lu_p\partial_ku_i}{v^2}\right)+\phi''(v)\frac{\partial_lu_p\,\partial_ku_i}{v^2}\right)\partial_j\partial_lu_p\,\partial_ju_if'(v)\dx\\
	=&\int_B\frac{\phi'(v)}{v}\left(\partial_l v \,\partial_j\partial_l u_i\,\partial_ju_i-\frac{\partial_k v\,\partial_ku_i\;\partial_j\partial_lu_p\,\partial_lu_p\partial_ku_i}{v^2}\right)f'(v)\dx\\
	&+\int_B\phi''(v)\frac{\partial_k v\,\partial_ku_i\;\partial_j\partial_lu_p\,\partial_lu_p\partial_ku_i}{v^2}f'(v)\dx\\
	=&\int_B\left(\frac{\phi'(v)}{v}\left(|\nabla v|^2-\frac{|\nabla v\cdot\nabla \textbf{u}|^2}{v^2}\right)+\phi''(v)\frac{|\nabla v\cdot\nabla \textbf{u}|^2}{v^2}\right)f'(v)\dx
\end{align*}

Since we have $f'\geq 0$ and $|\nabla v\cdot\nabla \textbf{u}|^2\leq v^2|\nabla v|^2$ because of the Cauchy-Schwartz inequality, we get
\begin{equation}\label{I}
	\lim_{h\rightarrow 0} \text{I}\geq 0
\end{equation}
To estimate $\text{II}_j$ we apply theorem \ref{phiestimates}(d) and get like in \cite{diening2008fractional}:
\begin{align*}
	(\tau_{j,h} &\textbf{A}(\nabla \textbf{u}))(x)\cdot(\tau_{j,h}\nabla \textbf{u})(x)\\
	&=\left(\textbf{A}(\nabla \textbf{u}(x+h))-\textbf{A}((\nabla \textbf{u})(x))\right)\cdot (\tau_{j,h}\nabla u)(x)\\
	&\sim |(\tau_{j,h} \textbf{V}(\nabla \textbf{u}))(x)|^2
\end{align*}
Dividing by $h^2$ gives
\begin{equation*}
	(\delta_{j,h} \textbf{A}(\nabla \textbf{u}))(x)\cdot(\delta_{j,h}\delta \textbf{u})(x)\sim |(\delta_{j,h} \textbf{V}(\nabla \textbf{u}))(x)|^2
\end{equation*}
Using this we get
\begin{equation}\label{II}
	\text{II}_j=\langle\delta_{j,h} \textbf{A}(\nabla \textbf{u}),f(|\delta_h \textbf{u}|)\delta_{j,h} \nabla \textbf{u}\,\eta^q\rangle\sim\dashint_B|\delta_{j,h} \textbf{V}(\nabla \textbf{u})|^2f(|\delta_h \textbf{u}|)\eta^q
\end{equation}

 We use \ref{tauAestimate} to estimate $\text{III}_j$ and note that
\begin{equation*}
	|(\delta_{j,h} \textbf{u})(x)|=\left\vert\dashint_0^h(\partial_j\textbf{u})(x+se_j)\ds\right\vert\leq\dashint_0^h|(\nabla \textbf{u}\circ T_{se_j})(x)|\ds
\end{equation*}
This gives

\begin{align}
	|\text{III}_j|=&|\langle\delta_{j,h}\textbf{A}(\nabla \textbf{u}),f(|\delta_h \textbf{u}|)\delta_{j,h} \textbf{u}\,q\eta^{q-1}\nabla \eta\rangle|\nonumber\\
	\lesssim&\frac 1 {h^2}\dashint_B\dashint_0^h\eta^{q-1}\phi'_{|\nabla \textbf{u}|}(|\tau_{j,h}\nabla \textbf{u}|)|\nabla \textbf{u}\circ T_{se_j}|\,h|\nabla \eta| f(|\delta_h \textbf{u}|)\ds\label{IIIbeg}
\end{align}
We now estimate the integrand using theorem \ref{phiestimates} (e), Young's inequality, equation \ref{stern}, $h|\nabla\eta|\leq 1$ with Lemma \ref{shiftedrausziehen} and theorem \ref{phiestimates} (d):
\begin{align}
	&\eta^{q-1}\phi'_{|\nabla \textbf{u}|}(|\tau_h\nabla \textbf{u}|)|\nabla \textbf{u}\circ T_{se_j}|\,h|\nabla \eta|\nonumber\\
	\lesssim&\eta^{q-1}\left(\phi'_{|\nabla \textbf{u}\circ T_{se_j}|}(|\tau_{j,h-s}\nabla \textbf{u}\circ T_{se_j}|)+\phi'_{|\nabla \textbf{u}\circ T_{se_j}|}(|\tau_{s}\nabla \textbf{u}|)\right)h|\nabla \eta|\,|\nabla \textbf{u}\circ T_{se_j}|\nonumber\\
	\leq&\epsilon\left(\phi_{|\nabla \textbf{u}\circ T_{se_j}|}\right)^{\ast}\left(\eta^{q-1}\phi'_{|\nabla \textbf{u}\circ T_{se_j}|}(|\tau_{j,h-s}\nabla \textbf{u}\circ T_{se_j}|)\right)\nonumber\\
	&+\epsilon\left(\phi_{|\nabla \textbf{u}\circ T_{se_j}|}\right)^{\ast}\left(\eta^{q-1}\phi'_{|\nabla \textbf{u}\circ T_{se_j}|}(|\tau_{s}\nabla \textbf{u}|)\right)\nonumber\\
	&+c_{\epsilon}\phi_{|\nabla \textbf{u}\circ T_{se_j}|}\left(h|\nabla \eta|\,|\nabla \textbf{u}\circ T_{se_j}|\right)\nonumber\\
	\lesssim&\epsilon\eta^q\left(\phi_{|\nabla \textbf{u}\circ T_{se_j}|}\right)^{\ast}\left(\phi'_{|\nabla \textbf{u}\circ T_{se_j}|}(|\tau_{j,h-s}\nabla \textbf{u}\circ T_{se_j}|)\right)\nonumber\\
	&+\epsilon\eta^q\left(\phi_{|\nabla \textbf{u}\circ T_{se_j}|}\right)^{\ast}\left(\phi'_{|\nabla \textbf{u}\circ T_{se_j}|}(|\tau_{s}\nabla \textbf{u}|)\right)\nonumber\\
	&+c_\epsilon h^2|\nabla \eta|^2\phi\left(|\nabla \textbf{u}\circ T_{se_j}|\right)\nonumber\\
	\lesssim& \epsilon \eta^q \phi_{|\nabla \textbf{u}\circ T_{se_j}|}\left(|\tau_{j,h-s}\nabla \textbf{u}\circ T_{se_j}|\right)
	+\epsilon\phi_{|\nabla \textbf{u}\circ T_{se_j}|}\left(|\tau_{s}\nabla \textbf{u}|\right)\nonumber\\
	&+c_\epsilon h^2|\nabla \eta|^2\phi\left(|\nabla \textbf{u}\circ T_{se_j}|\right)\nonumber\\
	\sim&\epsilon\eta^q|\tau_{j,h-s}\textbf{V}(\nabla \textbf{u})\circ T_{se_j}|^2+\epsilon\eta^q|\tau_{j,s} V(\nabla \textbf{u})|^2+c_\epsilon h^2|\nabla \eta|^2\phi\left(|\nabla \textbf{u}\circ T_{se_j}|\right)\label{phiestimate}
\end{align}
Putting this in \ref{IIIbeg} we get
\begin{align}
	|\text{III}_j|=&|\langle\delta_{j,h}\textbf{A}(\nabla \textbf{u}),f(|\delta_h \textbf{u}|)\delta_{j,h} \textbf{u}\;q\eta^{q-1}\nabla \eta\rangle|\nonumber\\
	\lesssim&\frac {\epsilon} {h^2}\dashint_B\dashint_0^h \left\vert\tau_{j,h-s}\textbf{V}(\nabla \textbf{u})\circ T_{se_j}\right\vert^2f(|\delta_h \textbf{u}|)\ds\nonumber\\
	&+\frac {\epsilon} {h^2}\dashint_B\dashint_0^h \left\vert\tau_{j,s} \textbf{V}(\nabla \textbf{u})\right\vert^2f(|\delta_h \textbf{u}|)\ds\nonumber\\
	&+c_\epsilon \dashint_B\dashint_0^h\phi(|\nabla \textbf{u} \circ T_{se_j}|)|\nabla \eta|^2f(|\delta_h \textbf{u}|)\ds\label{III}
\end{align}
Putting \ref{II} and \ref{III} in \ref{all} we get after a summation over $j$
\begin{align}
	\text{I}+\text{I}':=&\text{I}+\dashint_B|\delta_h \textbf{V}(\nabla \textbf{u})|^2 f(|\delta_h \textbf{u}|)\eta^q\nonumber\\
	\lesssim& \epsilon \sum_{j=1}^m\dashint_B\dashint_0^h \left\vert\frac{\tau_{j,h-s}\textbf{V}(\nabla \textbf{u})\circ T_{se_j}}{h}\right\vert^2f(|\delta_h \textbf{u}|)\eta^q\ds\nonumber\\
	&+ \epsilon \sum_{j=1}^m\dashint_B\dashint_0^h \left\vert\frac{\tau_{j,s} \textbf{V}(\nabla u)}{h}\right\vert^2f(|\delta_h \textbf{u}|)\eta^q\ds\nonumber\\
	&+c_\epsilon  \sum_{j=1}^m\dashint_B\dashint_0^h\phi(|\nabla \textbf{u} \circ T_{se_j}|)|\nabla \eta|^2f(|\delta_h \textbf{u}|)\ds\nonumber\\
	&=:\epsilon\sum_{j=1}^m\text{II}'_j+\epsilon\sum_{j=1}^m\text{III}'_j+c_\epsilon\sum_{j=1}^m\text{IV}'_j
	\label{discreteEnergy}
\end{align}

We now want to take the limit $h\rightarrow 0$ in \ref{discreteEnergy} and know from equation \ref{I} that $\lim_{h\rightarrow 0}\text{I}\geq 0$ and note that $\textbf{V}(\nabla \textbf{u})\in W^{1,2}_{\text{loc}}(\Omega)$ as proved in theorem \ref{stattheorem1}. This means we have $\delta \textbf{V}(\nabla \textbf{u})\rightarrow \nabla \textbf{V}(\nabla \textbf{u})$ in $L^2(B)$. Since $\textbf{u}\in W^{1,\phi}_{\text{loc}}(\Omega)$ we also have $\delta_h \textbf{u}\rightarrow \nabla \textbf{u}$ and therefore $f(|\delta_h\textbf{u}|)\rightarrow f(v)$ pointwise almost everywhere for a subsequence and as $\eta\in C^{\infty}_0(B)$ $\eta^q$ is uniformly continuous.\\
For $\text{I}'$ this means (passing to this subsequence)

\begin{align*}
	&\left\vert\dashint_B|\delta_h \textbf{V}(\nabla \textbf{u})|^2 f(|\delta_h \textbf{u}|)\eta^q-\dashint_B|\nabla \textbf{V}(\nabla \textbf{u})|^2 f(v)\eta^q\right\vert\\
	\leq &\dashint_B \left\vert|\delta_h \textbf{V}(\nabla \textbf{u})|^2-|\nabla \textbf{V}(\nabla \textbf{u})|^2\right\vert\, f(|\delta_h \textbf{u}|)\eta^q\\
	&+\dashint_B |\nabla \textbf{V}(\nabla \textbf{u})|^2\,|f(|\delta_h \textbf{u})- f(v)|\eta^q\\
	=:&\text{I}'_1+\text{I}'_2
\end{align*}
Since $f(|\delta_h \textbf{u}|)\eta^q\leq \norm{f}_\infty$ and  $\delta \textbf{V}(\nabla \textbf{u})\rightarrow \nabla \textbf{V}(\nabla \textbf{u})$ in $L^2(B)$ $\text{I}'_1$ tends to zero. For the integrand in $\text{I}'_2$ we have the dominating function $\norm{f}_\infty\,|\nabla \textbf{V}(\nabla \textbf{u})|^2$ and this summand also goes to zero by dominated convergence as  $f(|\delta_\textbf{u}|)\rightarrow f(v)$ pointwise almost everywhere. In total this gives
\begin{equation}\label{IKonvergenz}
	\text{I}'\rightarrow \dashint_B|\nabla \textbf{V}(\nabla \textbf{u})|^2 f(v)\eta^q
\end{equation}

We now look at $\text{IV}'_j$ and use the theorem of Fubini-Tonelli:
\begin{align*}
	&\left\vert\dashint_B\dashint_0^h\phi(|\nabla \textbf{u} \circ T_{se_j}|)|\nabla \eta|^2f(|\delta_h \textbf{u}|)\ds-\dashint_B\dashint_0^h\phi(|\nabla \textbf{u}|)|\nabla \eta|^2f(v)\ds\right\vert\\
	\leq & \dashint_B\dashint_0^h\left\vert\left(\phi(|\nabla \textbf{u} \circ T_{se_j}|)-\phi(|\nabla \textbf{u}| )\right)|\nabla \eta|^2f(|\delta_h \textbf{u}|)\right\vert\ds\\
	+&\dashint_B|\phi(|\nabla \textbf{u} |)|\nabla \eta|^2\left(f(|\delta_h \textbf{u}|)-f(v)\right)|\\
	\lesssim&\norm{f|\nabla\eta|^2}_\infty\dashint_0^h\dashint_B|\phi(|\nabla \textbf{u} \circ T_{se_j}|)-\phi(|\nabla \textbf{u} |)|\ds\\
	&+\dashint_B\phi(|\nabla \textbf{u}| )|f(|\delta_h \textbf{u}|)-f(v)|\,|\nabla\eta|^2\\
	=:&\text{IV}'_{j,1}+\text{IV}'_{j,2}
\end{align*}
 \sloppy To show $\text{IV}'_{j,2}\rightarrow 0$ we use dominated convergence with the dominant $\phi(v)\norm{f|\nabla\eta|^2}_\infty$ and $f(|\delta_h\textbf{u}|)\rightarrow f(v)$ pointwise almost everywhere for a subsequence as above. To estimate $\text{IV}'_{j,1}$ we use the $L^\phi$-continuity of translations and  the third implication in lemma \ref{phiconvergence} and observe that 
$$ g:\; s\mapsto \dashint_B\left\vert\phi(|\nabla \textbf{u} \circ T_{se_j}|)-\phi(|\nabla \textbf{u} |)\right\vert $$
is a continuous function with $g(0)=0$. But with the fundamental theorem of calculus we have 
$$\lim_{h\rightarrow 0}\frac 1 h \int_0^h g(s)\ds=\frac{\text{d}}{\text{d}h}\int_0^h g(s)\ds=g(0) =0$$
and therefore $\text{IV}'_{j,1}\rightarrow 0$ and after choosing a subsequence we get 
\begin{equation}\label{IVKonvergenz}
	\text{IV}'_j\rightarrow \dashint_B\phi(|\nabla \textbf{u}|)|\nabla \eta|^2f(v) 
\end{equation}
We now want to estimate $\text{III}'_j$ (from \ref{discreteEnergy}) and observe using $h>s$:
$$\text{III}'_j=\dashint_B\dashint_0^h \left\vert\frac{\tau_{j,s} \textbf{V}(\nabla \textbf{u})}{h}\right\vert^2f(|\delta_h \textbf{u}|)\eta^q\ds\leq\dashint_0^h \dashint_B\left\vert\delta_{j,s} \textbf{V}(\nabla \textbf{u})\right\vert^2f(|\delta_h \textbf{u}|)\eta^q\ds=:\text{III}''_j $$
We estimate this term:

\begin{align*}
	&\left\vert\dashint_0^h \dashint_B|\delta_{j,s} \textbf{V}(\nabla \textbf{u})|^2f(|\delta_h \textbf{u}|)\eta^q\ds-\dashint_0^h \dashint_B|\,|\partial_j \textbf{V}(\nabla \textbf{u})|^2f(v)\eta^q\ds\right\vert\\
	\leq&\norm{f}_\infty\dashint_0^h \dashint_B\left\vert\delta_{j,s} \textbf{V}(\nabla \textbf{u})|^2-|\partial_j \textbf{V}(\nabla \textbf{u})|^2 \right\vert\ds\\
	&+\dashint_0^h \dashint_B|\partial_j \textbf{V}(\nabla \textbf{u})|^2|f(|\delta_h \textbf{u}|)-f(v)|\eta^q\ds|\\
	=:&\text{III}''_{j,1}+\text{III}''_{j,2}
\end{align*}
We have $\text{III}''_{j,2}\rightarrow 0$ for $h\rightarrow 0$ in a subsequence as we had $\text{IV}'_{j,2}\rightarrow 0$ as the integrand is bounded by $\norm{f}_\infty|\partial_j \textbf{V}(\nabla \textbf{u})|^2\in L^1(B)$ and we can use dominated convergence.\\
To estimate $\text{III}''_{j,1}$ we note that if $w_n\rightarrow w$ in $L^2$ also $\norm{w_n}_{L^2}\rightarrow\norm{w}_{L^2}$ and we get using $\textbf{V}(\nabla \textbf{u})\in W^{1,2}_{\text{loc}}(\Omega)$:
$$s\mapsto \dashint_B (|\delta_{j,s} \textbf{V}(\nabla \textbf{u})|^2-|\partial_j \textbf{V}(\nabla \textbf{u})|^2)$$
is also a continuous function which is $0$ at $s=0$ and using the same arguments we used for $\text{IV}'_j$ we get $\text{III}''_{j,1}\rightarrow 0$ and therefore 
\begin{equation}\label{IIIKonvergenz}
	\text{III}'_j\leq\text{III}''_j\rightarrow \dashint_B|\partial_j \textbf{V}(\nabla \textbf{u})|^2f(v)\eta^q
\end{equation}
For $\text{II}_j'$ in \ref{discreteEnergy} we first use the invariance of the Lebesgue measure under translations. We also chose $h$ small enough that the closure of the ball $B'$ with the same center as $B$ and radius $r+h$ is contained in $\Omega$ which is possible since $B\Subset\Omega$ and get
\begin{align*}
	|B|\text{II}_J'=&|B|\dashint_B\dashint_0^h \left\vert\frac{\tau_{j,h-s}\textbf{V}(\nabla \textbf{u})\circ T_{se_j}}{h}\right\vert^2f(|\delta_h\textbf{u}|)\eta^q\ds\\
	\leq&\dashint_0^h \int_{B'} \left\vert\frac{\tau_{j,h-s}\textbf{V}(\nabla \textbf{u})}{h-s}\right\vert^2\left(\left(\eta^q f(|\delta_h \textbf{u}|)\right)\circ T_{-se_j}\right)\ds\\
	=&\dashint_0^h \int_{B'} \left\vert\frac{\tau_{s}\textbf{V}(\nabla \textbf{u})}{s}\right\vert^2\left(\left(\eta^q f(|\delta_h \textbf{u}|)\right)\circ T_{(s-h)e_j}\right)\ds=:\text{II}_j''
\end{align*}

We then have

\begin{align*}
	&\left\vert\dashint_0^h \int_{B'} |\delta_{s,j}\textbf{V}(\nabla \textbf{u})|^2\left(\left(\eta^q f(|\delta_h \textbf{u}|)\right)\circ T_{(s-h)e_j}\right)-|\partial_j \textbf{V}(\nabla \textbf{u})|^2f(v)\eta^q\ds\right\vert\\
	\leq&\dashint_0^h \int_{B'} \left\vert|\delta_{s,j}\textbf{V}(\nabla \textbf{u})|^2-|\partial_j \textbf{V}(\nabla \textbf{u})|^2\right\vert\,\left(\eta^q f(|\delta_h \textbf{u}|)\right)\circ T_{(s-h)e_j}\ds\\
	&+\dashint_0^h \int_{B'}|\partial_j \textbf{V}(\nabla \textbf{u})|^2\,\left\vert\left(\eta^q f(|\delta_h \textbf{u}|)\right)\circ T_{(s-h)e_j}-\left(\eta^q f(|\delta_h \textbf{u}|)\right)\circ T_{-he_j}\right\vert\ds\\
	&+\dashint_0^h \int_{B'}|\partial_j \textbf{V}(\nabla \textbf{u})|^2\,\left\vert\left(\eta^q f(|\delta_h \textbf{u}|)\right)\circ T_{-he_j}-\eta^q f(|\delta_h \textbf{u}|)\right\vert\ds\\
	&+\dashint_0^h \int_{B'}|\partial_j \textbf{V}(\nabla \textbf{u})|^2\,\left\vert f(|\delta_h \textbf{u}|)- f(v)\right\vert\eta^q\ds\\
	=:&\text{II}''_1+\text{II}_2''+\text{II}_3''+\text{II}_4''
\end{align*}

We have $\text{II}''_1\rightarrow 0$ for the same reasons as $\text{IV}'_{j,1}\rightarrow 0$ and $\text{III}'_{j,1}\rightarrow 0$. The integrands of $\text{II}''_2$ and $\text{II}_3''$ are bounded by the $L^1_{\text{loc}}$-function $\norm{f}_\infty|\partial_j \textbf{V}(\nabla \textbf{u})|^2$ and go to zero for $s\rightarrow 0$ pointwise almost everywhere. This means the integrals over $B'$ go to zero and we can use the fundamental theorem as before. We get $\text{II}_4''\rightarrow 0$ via dominated convergence like $\text{III}''_{j,2}$.\\
This means in the end (using also $\supp \eta\subset B$):

\begin{equation}\label{IIKonvergenz}
	\text{II}_j'\leq \frac{1}{|B|}\text{II}''_j\rightarrow \dashint_B |\partial_j \textbf{V}(\nabla \textbf{u})|^2f(v)\eta^q
\end{equation}

Now we can let $h\rightarrow 0$ in \ref{discreteEnergy} and get using \ref{IKonvergenz}, \ref{IVKonvergenz}, \ref{IIIKonvergenz} and \ref{IIKonvergenz}

\begin{equation}\label{statepsilonenergy}
	\dashint_B |\nabla \textbf{V}(\nabla \textbf{u})|^2\zeta^q f(v)\lesssim 2\epsilon\dashint_B |\nabla \textbf{V}(\nabla \textbf{u})|^2\zeta^q f(v)+c_\epsilon\dashint_B\phi(v)|\nabla\zeta|^2f(v)
\end{equation}
We choose $\epsilon$ small enough that we can absorb the first summand of the right hand side on the left hand side and the proof for $f\in C^1$ is concluded.
\end{proof}

\begin{proof}[Proof of theorem \ref{statenergy}]
For the case of a general non decreasing bounded piecewise differentiable function $f$ approximate it by a sequence of non-decreasing, uniformly bounded $C^1$ functions $f_k$ with $\lim_{k\rightarrow\infty}f_k(x)=f(x)$ for all $x\in\mathbb{R}^+_0$. We use \ref{statenergylemma} and get

$$\dashint_B D_k:=\dashint_B |\nabla \textbf{V}(\nabla \textbf{u})|^2\eta^qf_k(v)\lesssim\dashint_B\phi(v)|\nabla\eta|^2f_k(v)=:\dashint_B E_k$$
As we have $f_k\rightarrow f$ pointwise everywhere, we get $D_k\rightarrow D_\infty$ and $E_k\rightarrow E_\infty$ almost everywhere. As we have $E_k\leq\norm{f}_\infty|\nabla \textbf{V}(\nabla \textbf{u})|^2\eta^q\in L^1(B)$ and $E_k\leq\norm{f}_\infty\phi(v)|\nabla\eta|^2\in L^1(B)$, we can use dominated convergence and get the desired result.
\end{proof}

\begin{corollary}\label{statcor}
 Let $\phi$ be an N-function with $\Delta_2(\{\phi,\phi^\ast\})$ satisfying the assumption \ref{mainassumption} and let $\textbf{u}\in W^{1,\phi}_{\text{loc}}(\Omega,\mathbb{R}^m)$ be a local weak solution to 
 $\Delta_\phi \textbf{u}=0 $ and $G(t):=(\psi'(t)-\psi'(\gamma))_+$ with a non negative real number $\gamma$\\
 Then we have
 \begin{equation}
  \dashint_B |\nabla \left(G(v)\eta^{\frac q 2}\right)|^2\lesssim \dashint_B \phi(v) \chi_{\{v>\gamma\}} |\nabla \eta|^2\\
 \end{equation}
\end{corollary}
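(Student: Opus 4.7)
The plan is to deduce the corollary from Theorem \ref{statenergy} by choosing the specific cutoff $f(t) := \chi_{(\gamma,\infty)}(t)$. This $f$ is non-decreasing, non-negative, bounded by $1$, piecewise $C^1$ (with a single jump at $\gamma$), and constant ($\equiv 1$) for $t>\gamma$, so it satisfies all hypotheses. Applying the theorem with this choice gives
\begin{equation*}
\dashint_B |\nabla\textbf{V}(\nabla\textbf{u})|^2 \eta^q \chi_{\{v>\gamma\}} \lesssim \dashint_B \phi(v)|\nabla\eta|^2 \chi_{\{v>\gamma\}},
\end{equation*}
which will supply the ``diagonal'' term once I relate $\nabla G(v)$ to $\nabla\textbf{V}(\nabla\textbf{u})$.

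Next I expand the gradient using the product rule:
\begin{equation*}
|\nabla(G(v)\eta^{q/2})|^2 \lesssim |\nabla G(v)|^2\,\eta^q + G(v)^2\,\eta^{q-2}|\nabla\eta|^2 \leq |\nabla G(v)|^2\,\eta^q + G(v)^2\,|\nabla\eta|^2,
\end{equation*}
where in the last step I used $0\le\eta\le 1$ and $q>2$. The key observation for the first summand is that $|\textbf{V}(\nabla\textbf{u})| = \psi'(v)$ by the very definition of $\textbf{V}$, so $G(v) = (|\textbf{V}(\nabla\textbf{u})| - \psi'(\gamma))_+$. Since $\textbf{V}(\nabla\textbf{u})\in W^{1,2}_{\text{loc}}$ (invoked in the proof of Lemma \ref{statenergylemma} via Theorem \ref{stattheorem1}), so is its norm, and the Kato-type inequality $|\nabla|\textbf{V}(\nabla\textbf{u})|| \leq |\nabla\textbf{V}(\nabla\textbf{u})|$ together with the chain rule for the positive part yields
\begin{equation*}
|\nabla G(v)| \leq |\nabla\textbf{V}(\nabla\textbf{u})|\,\chi_{\{v>\gamma\}} \qquad\text{a.e.}
\end{equation*}
For the second summand, $G(v)^2 \leq \psi'(v)^2\,\chi_{\{v>\gamma\}} = \phi'(v)v\,\chi_{\{v>\gamma\}} \sim \phi(v)\,\chi_{\{v>\gamma\}}$, where the last equivalence uses the $\Delta_2$-consequence $\phi'(t)t\sim\phi(t)$.

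Combining these two pointwise bounds, integrating, and applying the specialization of Theorem \ref{statenergy} displayed above, I obtain
\begin{equation*}
\dashint_B |\nabla(G(v)\eta^{q/2})|^2 \lesssim \dashint_B |\nabla\textbf{V}(\nabla\textbf{u})|^2\,\eta^q\,\chi_{\{v>\gamma\}} + \dashint_B \phi(v)\,\chi_{\{v>\gamma\}}\,|\nabla\eta|^2 \lesssim \dashint_B \phi(v)\,\chi_{\{v>\gamma\}}\,|\nabla\eta|^2,
\end{equation*}
which is the claim. The only mildly delicate point is justifying the chain rule/Kato inequality for $G(v)=(\psi'(v)-\psi'(\gamma))_+$ at the level set $\{v=\gamma\}$ and at points where $\nabla\textbf{u}$ vanishes; this is handled by working with $|\textbf{V}(\nabla\textbf{u})|$ (which is smoother than $v$ itself in the degenerate regime) and using that $\nabla |\textbf{V}(\nabla\textbf{u})| = 0$ a.e.\ on $\{|\textbf{V}(\nabla\textbf{u})|=\psi'(\gamma)\}$. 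Everything else is a direct substitution.
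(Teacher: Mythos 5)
Your proposal is correct and follows essentially the same route as the paper: specialize Theorem \ref{statenergy} with $f=\chi_{\{t>\gamma\}}$, use the identity $|\textbf{V}(\nabla \textbf{u})|=\psi'(v)$ together with the Kato-type bound $|\nabla|\textbf{V}(\nabla \textbf{u})||\le|\nabla\textbf{V}(\nabla \textbf{u})|$ and the chain rule for the positive part to control $|\nabla G(v)|^2\eta^q$, bound $G(v)^2\lesssim\phi(v)\chi_{\{v>\gamma\}}$ for the off-diagonal term, and assemble via the product rule. The only cosmetic difference is the order of operations (you expand the product first and then bound each piece, while the paper bounds the two pieces and then invokes the product rule at the end), which is immaterial.
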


\begin{proof}
  We use $f(t)=\chi_{\{t>\gamma\}}$. With theorem \ref{statenergy} we get
  $$\dashint_B |\nabla \textbf{V}(\nabla \textbf{u})|^2\eta^q\chi_{\{v>\gamma\}}\lesssim\dashint_B\phi(v)|\nabla\eta|^2\chi_{\{v>\gamma\}}$$
  For the left hand side we use that $|(|\textbf{Q}|)'|=|\frac {\textbf{Q}} {|\textbf{Q}|}|\leq 1$ and $(x_+)'=\chi_{\mathbb{R}^+}(x)$ which are both bounded which means that we can apply the chain rule for sobolev functions and $\chi_{t>\gamma}=\chi_{\{t>\gamma\}}^2$ almost everywhere:
  \begin{align}
    &\dashint_B |\nabla \textbf{V}(\nabla \textbf{u})|^2\eta^q\chi_{\{v>\gamma\}}\geq\dashint_B |\nabla \left(|\textbf{V}(\nabla \textbf{u})|\right)|^2\eta^q\chi_{\{v>\gamma\}}\nonumber\\
		=&\dashint_B \left\vert\nabla \left(\psi'(v)\right)\chi_{\{v>\gamma\}}\eta^{\frac q 2}\right\vert^2
    \geq\dashint_B \left\vert\nabla \left(\psi'(v)-\psi'(\gamma)\right)\chi_{\{v>\gamma\}}\eta^{\frac q 2}\right\vert^2 \nonumber \\=&\dashint_B \left\vert\nabla \left(\left(\psi'(v)-\psi'(\gamma)\right)_+\right)\eta^{\frac q 2}\right\vert^2 \label{cor11}
  \end{align}
\sloppy  As we also have $G^2(v)\leq\psi'(v)^2\chi_{\{v>\gamma\}}\sim\phi(v)\chi_{\{v>\gamma\}}$ and $|\nabla(\eta^{\frac q 2})|=\frac q 2 \eta^{\frac q 2-1}|\nabla\eta|\lesssim|\nabla\eta|$ we get
  \begin{equation}\label{cor12}
    \dashint_B G^2(v)\left\vert\nabla\left(\eta^{\frac q 2}\right)\right\vert^2\lesssim \dashint_B\phi(v)|\nabla\eta|^2\chi_{\{v>\gamma\}}
  \end{equation}
  After adding \ref{cor11} and \ref{cor12} we conclude the proof with the product rule.
 \end{proof}

\newpage

\subsection{The parabolic case}\label{secInstatEnergy}
 \begin{theorem}[Energy estimate for the inelliptic case] \label{instatenergy}
 Let $\phi$ be an N-function with $\Delta_2(\{\phi,\phi^\ast\})<\infty$ satisfying the assumption \ref{mainassumption} and let $\textbf{u}\in L_{\text{loc}}^\phi(J\times \Omega,\mathbb{R}^m)\cap L_{\text{loc}}^2(J\times \Omega,\mathbb{R}^m)$ with $|\nabla \textbf{u}|:=v\in L^\phi_{\text{loc}}(J\times\Omega)\cap L_{\text{loc}}^2(J\times\Omega)$ be a local weak solution to 
 $$\Delta_\phi \textbf{u}=\partial_t\textbf{u}$$
on a cylindrical domain $J\times\Omega\subset\mathbb{R}^{1+n}$ and let $f:\mathbb{R}^+_0\rightarrow\mathbb{R}$ be a non-decreasing, piecewise continuously differentiable, bounded function which is constant for large arguments. Define $\textbf{V}(\textbf{Q})=\frac{\sqrt{\phi'(|\textbf{Q}|)}}{|\textbf{Q}|}\textbf{Q}$ as usual and $H'(t)=tf(t)$ and  let $Q:=I\times B\Subset J\times\Omega$ be a cylinder where $B$ is a ball in $\mathbb{R}^n$ of radius $R_x$ and $I$ an interval of length $R_t=\alpha R_x^2$ and $\eta$ a $C_0^\infty(Q)$ function with $0\leq\eta\leq 1$.\\
 Then we get
 \begin{align}
  &\sup_I\frac 1 \alpha \dashint_B H(v)\eta^q+R_x^2\dashint_Q |\nabla \textbf{V}(\nabla \textbf{u})|^2\eta^qf(v)\nonumber\\
	\lesssim &R_x^2\dashint_Q|\textbf{V}(\nabla \textbf{u})|^2|\nabla\eta|^2f(v)+R_x^2\dashint_Q H(v)\eta^{q-1}\partial_t\zeta
 \end{align}
\end{theorem}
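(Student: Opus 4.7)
The strategy mirrors lemma \ref{statenergylemma}, with the new ingredient being control of the time-derivative term in the parabolic weak formulation. First reduce to $f \in C^1$ with $f' \geq 0$ compactly supported; the general case follows by dominated convergence exactly as theorem \ref{statenergy} was deduced from lemma \ref{statenergylemma}. Insert the test function
\begin{equation*}
\boldsymbol{\zeta} := \sum_{j=1}^n \delta_{j,-h}\bigl(f(|\delta_h \textbf{u}|)\,\delta_{j,h}\textbf{u}\,\eta^q\bigr)\,\chi(t)
\end{equation*}
into the weak form, where $\chi$ is a smooth cutoff localizing to $(t_1,s)\subset I$ (with $s$ to be optimized) and $q>2$ is chosen via lemma \ref{rausziehen} as in the elliptic case. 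The distributional $\partial_t \textbf{u}$ is made rigorous by Steklov averaging in $t$, which I suppress for clarity as in lemma \ref{statenergylemma}.

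The spatial contribution $\int\!\int \langle \textbf{A}(\nabla \textbf{u}),\nabla \boldsymbol{\zeta}\rangle$ decomposes into the three pieces $\text{I}_j,\text{II}_j,\text{III}_j$ of \eqref{all}, each now carrying the factor $\chi(t)$. The elliptic analysis applies verbatim after integrating in $t$: $\lim_{h\to 0}\sum_j \text{I}_j \geq 0$ by monotonicity of $f$ together with Cauchy--Schwarz; $\sum_j \text{II}_j$ converges to $\int_I\!\int_B |\nabla \textbf{V}(\nabla \textbf{u})|^2 \eta^q f(v)\chi$; and $\sum_j \text{III}_j$ is controlled via theorem \ref{phiestimates}, Young's inequality and lemma \ref{convexintegral} by $\epsilon$ copies of $\text{II}_j$ plus $c_\epsilon \int_I\!\int_B \phi(v)|\nabla\eta|^2 f(v)\chi$. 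Absorbing the $\epsilon$-term and invoking $|\textbf{V}(\nabla\textbf{u})|^2 \sim \phi(v)$ produces the second left-hand side term and the first right-hand side term of the claim.

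For the time contribution, finite-difference integration by parts in $x$ gives
\begin{equation*}
\int \partial_t \textbf{u}\cdot\delta_{j,-h}\bigl(f(|\delta_h \textbf{u}|)\,\delta_{j,h}\textbf{u}\,\eta^q\bigr)\dx \;=\; -\int \partial_t(\delta_{j,h}\textbf{u})\cdot f(|\delta_h \textbf{u}|)\,\delta_{j,h}\textbf{u}\,\eta^q\dx.
\end{equation*}
Summing over $j$, the identity $\sum_j \delta_{j,h}\textbf{u}\cdot \partial_t \delta_{j,h}\textbf{u} = |\delta_h\textbf{u}|\,\partial_t|\delta_h\textbf{u}|$ together with $H'(t) = t f(t)$ identifies the right-hand side as $-\int \partial_t H(|\delta_h\textbf{u}|)\eta^q \dx$. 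Multiplying by $\chi(t)$ and integrating in $t$, another integration by parts produces the boundary term $\bigl[\int_B H(|\delta_h\textbf{u}|)\eta^q\chi\dx\bigr]_{t_1}^{t_2}$ and the bulk term $\int_I\!\int_B H(|\delta_h\textbf{u}|)\,\partial_t(\eta^q\chi)$. Passing $h\to 0$ replaces $\delta_h\textbf{u}$ by $\nabla\textbf{u}$ and $H(|\delta_h\textbf{u}|)$ by $H(v)$.

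Choosing $\chi$ as a smooth approximation of $\chi_{(t_1,s]}$ for arbitrary $s\in I$ and optimizing over $s$ converts the boundary contribution at $t=s$ into $\sup_I \int_B H(v)\eta^q\dx$, giving the first term of the left-hand side. The factors $R_x^2$ on the remaining terms emerge from the parabolic scaling $|I| = R_t = \alpha R_x^2$ when translating absolute integrals into cylinder averages $\dashint_Q$. The principal technical obstacle, absent in the elliptic case, is the rigorous handling of the time integration by parts — since $\partial_t \textbf{u}$ is only distributional, Steklov averaging is required — together with the simultaneous coordination of the limit $h\to 0$ and the time cutoff $\chi$ needed to extract the supremum.
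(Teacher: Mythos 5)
Your proposal is correct and follows essentially the same route as the paper's own proof (lemma \ref{instatenergylemma}): the same discrete-difference test function multiplied by a time cutoff, the same decomposition of the spatial term into the three pieces of the elliptic case, the same identification $\sum_j \delta_{j,h}\textbf{u}\cdot\partial_t\delta_{j,h}\textbf{u}=|\delta_h\textbf{u}|\,\partial_t|\delta_h\textbf{u}|$ yielding $\partial_t H(|\delta_h\textbf{u}|)$, and the same optimization over the cutoff endpoint to extract the supremum. The one place you compress more than the paper does is the time regularization: the paper carries out the convolution mollification $\textbf{u}_\sigma=\textbf{u}\ast\xi_\sigma$ explicitly and verifies convergence of each piece $\text{I}_1,\text{I}_2,\text{I}_3$ as $\sigma\to0$ (using $\textbf{A}(\nabla\textbf{u})\in L^{\phi^\ast}$ and the boundedness of $f'(t)t$), whereas you flag Steklov averaging as ``required'' but suppress it; this is genuinely the bulk of the technical work in the parabolic case, so a complete write-up would need to reinstate it.
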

As in the elliptic case, we start with a lemma restricting $f$ to differentiable functions with $f'\geq 0$.
\begin{lemma}\label{instatenergylemma}
 The assertion of theorem \ref{instatenergy} holds with the additional assumption $f\in C^1$ with $f'(t)=0$ for large $t$.
\end{lemma}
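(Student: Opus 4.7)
The plan is to transplant the argument of Lemma \ref{statenergylemma} to the parabolic setting, treating the spatial test-function manipulation exactly as before and doing genuinely new work only for the parabolic term. Since the weak solution has only $C(I, L^2)$-regularity in time, we would first introduce the time Steklov average $\textbf{u}_\tau(t,x) := \tfrac 1 \tau \int_t^{t+\tau}\textbf{u}(s,x)\ds$, which is classically differentiable in $t$. We substitute $\textbf{u}_\tau$ wherever $\textbf{u}$ appears inside the test function and pass to the limits $h \to 0$ and $\tau \to 0$ only at the end.

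For each $t_0 \in I$ we test the (Steklov-averaged) parabolic identity on $[t_1, t_0]$---with $t_1$ chosen below the temporal support of $\eta$---against
$$\boldsymbol{\zeta}^{(j)}(x, t) := \delta_{j,-h}\bigl(f(|\delta_h \textbf{u}_\tau|)\,\delta_{j,h}\textbf{u}_\tau\,\eta^q\bigr)(x,t),$$
using the same $q > 2$ as in Lemma \ref{statenergylemma}, and sum over $j$. The divergence-form part $\int_{t_1}^{t_0}\!\!\int_B \textbf{A}(\nabla\textbf{u}) \cdot \nabla \boldsymbol{\zeta}^{(j)}$ splits verbatim into the three pieces $\mathrm{I}_j$, $\mathrm{II}_j$, $\mathrm{III}_j$ of the elliptic proof: $\mathrm{I}_j$ is the chain-rule term whose $h \to 0$ limit is non-negative and can be discarded, $\mathrm{II}_j$ delivers $\int_Q |\nabla\textbf{V}(\nabla\textbf{u})|^2 \eta^q f(v)$ via Theorem \ref{phiestimates}(d), and $\mathrm{III}_j$ is absorbed by Young's inequality into a small multiple of $\mathrm{II}$ plus a constant multiple of $\int_Q \phi(v)|\nabla\eta|^2 f(v)$, which by $\phi(v)\sim |\textbf{V}(\nabla\textbf{u})|^2$ matches the first right-hand-side term of the claim.

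The genuinely new piece is the parabolic contribution
$$\sum_j\int_{t_1}^{t_0}\!\!\int_B (\partial_t \textbf{u}_\tau)\cdot \boldsymbol{\zeta}^{(j)}\dx\dt.$$
Moving $\delta_{j,-h}$ to the other side via its spatial adjoint $\delta_{j,h}$ and commuting with $\partial_t$, the sum over $j$ collapses nicely to
$$\int_{t_1}^{t_0}\!\!\int_B f(|\delta_h\textbf{u}_\tau|)\,\tfrac{1}{2}\partial_t|\delta_h\textbf{u}_\tau|^2\,\eta^q\dx\dt,$$
and the defining relation $H'(r) = r f(r)$ identifies the integrand as $\partial_t H(|\delta_h\textbf{u}_\tau|)\,\eta^q$. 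The Leibniz rule in time, together with $\eta(t_1,\cdot) = 0$, rewrites this contribution as
$$\int_B H(|\delta_h\textbf{u}_\tau(t_0)|)\,\eta^q(t_0)\dx - q\int_{t_1}^{t_0}\!\!\int_B H(|\delta_h\textbf{u}_\tau|)\,\eta^{q-1}\partial_t\eta\dx\dt,$$
which is exactly the supremum term (when we vary $t_0$) together with the second right-hand-side term of the claim.

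Combining the spatial and parabolic pieces, performing the $\epsilon$-absorption, taking the supremum over $t_0 \in I$, and passing $h, \tau \to 0$ via the convergence machinery of Lemma \ref{statenergylemma}---extended in the obvious way to the extra time variable---produces the claimed inequality up to the dimensional prefactors $R_x^2$ and $1/\alpha$, which simply reflect $|Q| = \alpha R_x^2|B|$. The principal technical obstacle will be the passage $\tau \to 0$: one has to ensure that $\partial_t\textbf{u}_\tau$ pairs correctly with the Steklov-averaged test function, and that $\textbf{V}(\nabla\textbf{u})$ lies in $L^2_{\mathrm{loc}}(I, W^{1,2}_{\mathrm{loc}}(\Omega))$---the parabolic analogue of Theorem \ref{stattheorem1}---so that the difference-quotient limits $|\delta_h\textbf{V}(\nabla\textbf{u})|^2 \to |\nabla\textbf{V}(\nabla\textbf{u})|^2$ go through as in the elliptic case.
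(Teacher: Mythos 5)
Your plan follows essentially the same route as the paper's proof: the paper regularizes in time by convolution with a one-dimensional mollifier $\xi_\sigma$ and inserts a smooth cutoff $\rho(t)\approx\chi_{(t_1,T)}$ into the test function, where you use a Steklov average $\textbf{u}_\tau$ and test directly over $[t_1,t_0]$ — both are standard and interchangeable — and your identification of the parabolic contribution with $\partial_t H(|\delta_h\textbf{u}|)\eta^q$ followed by integration by parts in time is exactly the paper's computation, with $\textbf{V}(\nabla\textbf{u})\in L^2_{\text{loc}}(I,W^{1,2}_{\text{loc}}(\Omega))$ being Theorem \ref{instatheorem1}. The one point to make explicit: as the paper does with $\sigma$, you must pass $\tau\to 0$ \emph{before} invoking the pointwise estimates of Theorem \ref{phiestimates}, since until then the $\textbf{u}_\tau$ inside the test function and the unregularized $\textbf{A}(\nabla\textbf{u})$ in the coefficient do not match and the key cancellation in $\mathrm{II}_j$ is not yet available.
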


\begin{proof}
	As we do not have (weak) differentiability of $\textbf{u}$ or $v$ in $t$, we need to use a standard mollifier $\xi_\sigma(t)$ in one dimension and denote $g_\sigma=g\ast\xi_\sigma$ This is differentiable in time for all $\sigma>0$ and converges to $g(x,t)$ in $L^\phi(Q)$  for $\sigma\rightarrow 0$ if $g\in L^\phi(Q)$. \\
	For the equation this means using the test function $\textbf{g}$:
	\begin{align*}
		&\int_Q\left[ \textbf{A}(\nabla \textbf{u})\right]_\sigma(t,x)\nabla \textbf{g}(t,x)\dz\\
		=&\int_Q\int  \textbf{A}(\nabla \textbf{u})(t-\tau,x)\xi_\sigma(\tau)\nabla \textbf{g}(t,x)\text{d}\tau \dz\\
		=&\int\int_Q  \textbf{A}(\nabla \textbf{u})(t,x)\nabla \textbf{g}(t+\tau,x)\dz\,\xi_\sigma(\tau)\text{d}\tau \\
		=&\int\int_Q \textbf{u}(t,x)\left(\partial_t \textbf{g}\right)(t+\tau,x)\dz\,\xi_\sigma(\tau)\text{d}\tau \\
		=&\int_Q\int \textbf{u}(t-\tau,x)\,\xi_\sigma(\tau)\text{d}\tau \left(\partial_t \textbf{g}\right)(t,x)\dz\\
		=&\int_Q \textbf{u}_\sigma \left(\partial_t \textbf{g}\right)(t,x)\dz
	\end{align*}
	We now use the test function $g(t,x):=\delta_{h,-j}(f(|\delta_{h}\textbf{u}_\sigma|)\delta_{h,j}\textbf{u}_\sigma\rho(t)\eta^q)$ where $\rho(t)$ is a $C^\infty$-approximation of $\chi_{t>t_0}$ and after a summation over $j$ using Einstein's summation convention and recalling $H'(t)=tf(t)$ we get:
	\begin{align*}
		&\dashint_Q\left[ \textbf{A}(\nabla \textbf{u})\right]_\sigma\nabla \delta_{h,-j}(f(|\delta_{h}\textbf{u}_\sigma|)\delta_{h,j}\textbf{u}_\sigma\rho(t)\eta^q)\dz \displaybreak[0]\\
		=&\dashint_Q \textbf{u}_\sigma \left(\partial_t \delta_{h,-j}(f(|\delta_{h}\textbf{u}_\sigma|)\delta_{h,j}\textbf{u}_\sigma\rho(t)\eta^q)\right)\dz\displaybreak[0]\\
		&\dashint_Q\left[ \delta_{h,j}\textbf{A}(\nabla \textbf{u})\right]_\sigma\nabla (f(|\delta_{h}\textbf{u}_\sigma|)\delta_{h,j}\textbf{u}_\sigma\rho(t)\eta^q)\dz \displaybreak[0]\\
		=&-\dashint_Q \partial_t\delta_{h,j}\textbf{u}_\sigma  f(|\delta_{h}\textbf{u}_\sigma|)\delta_{h,j}\textbf{u}_\sigma\rho(t)\eta^q\dz\displaybreak[0]\\
		=&-\dashint_Q f(|\delta_{h}\textbf{u}_\sigma|)|\delta_{h}\textbf{u}_\sigma|\partial_t|\delta_{h}\textbf{u}_\sigma|  \rho(t)\eta^q\dz\\
		=&-\dashint_Q \partial_t H(|\delta_h \textbf{u}_\sigma|)  \rho(t)\eta^q\dz
		=\dashint_Q  H(|\delta_h \textbf{u}_\sigma|)  \partial_t(\rho(t)\eta^q)\dz\displaybreak[0]\\
		=&\dashint_Q  H(|\delta_h \textbf{u}_\sigma|)  \eta^q\partial_t\rho(t)\dz+\dashint_Q  H(|\delta_h \textbf{u}|_\sigma)  \rho(t)\partial_t\eta^q\dz\\
		=&\dashint_Q  H(|\delta_h \textbf{u}_\sigma|)  \rho(t)\partial_t\eta^q\dz-\dashint_Q \rho(t) \partial_t \left(H(|\delta_h \textbf{u}_\sigma|)  \eta^q\right)\dz\\
		\end{align*}
	We now note that $\chi_{t_0,T}\leq 1$ and let $\rho\rightarrow \chi_{t_0,T}$ (as we have smoothed the functions the limits are easily justified by the dominated convergence theorem) and get
	\begin{align}
		\text{I}+\text{II}:=&\dashint_Q\left[ \delta_{h,j}\textbf{A}(\nabla \textbf{u})\right]_\sigma\nabla (f(|\delta_{h}\textbf{u}_\sigma|)\delta_{h,j}\textbf{u}_\sigma\rho(t)\eta^q)\dz+\frac{1} {R_t}\dashint_B  \left(H(|\delta_h \textbf{u}_\sigma|)  \eta^q\right)\dx\big \vert_{t=T}\nonumber \\
		\leq&\dashint_Q  H(|\delta_h \textbf{u}_\sigma|)\partial_t\left(\eta^q\right)\dz:=\text{III}\label{smoothedversion}
	\end{align}
	We now want to take the limit $\sigma\rightarrow 0$. \\
	\begin{align*}
		\text{I}=&\dashint_Q\left[ \delta_{h,j}\textbf{A}(\nabla \textbf{u})\right]_\sigma (\delta_{h,j}\nabla \textbf{u}_\sigma)f(|\delta_{h}\textbf{u}_\sigma|)\rho(t)\eta^q\dz\\
		+&\dashint_Q\left[ \delta_{h,j}\textbf{A}(\nabla \textbf{u})\right]_\sigma\nabla (f(|\delta_{h}\textbf{u}_\sigma|))\delta_{h,j}\textbf{u}_\sigma\rho(t)\eta^q\dz\\
		+&\dashint_Q\left[ \delta_{h,j}\textbf{A}(\nabla \textbf{u})\right]_\sigma \rho(t)\nabla(\eta^q)\delta_{h,j}\textbf{u}_\sigma f(|\delta_{h}\textbf{u}_\sigma|)\dz=:\text{I}_1+\text{I}_2+\text{I}_3
	\end{align*}
	We note that $\textbf{A}(\nabla \textbf{u})\in L^{\phi^\ast}(Q)$ since
	\begin{equation*}
		\phi^\ast\left(\left\vert\frac{\phi'(v)}{v}\nabla \textbf{u}\right\vert\right)=\phi^\ast(\phi'(v))\sim\phi(v)\in L^1_{\text{loc}}(J\times\Omega)
	\end{equation*}
	And as $L^{\phi^\ast}(Q)$ is a vector space because of $\Delta_2(\phi^\ast)<\infty$, we also have  $\delta_{h,j}\textbf{A}(\nabla \textbf{u})\in L^{\phi^\ast}(Q)$ and therefore $\left[\delta_{h,j}\textbf{A}(\nabla \textbf{u})\right]_\sigma\rightarrow \delta_{h,j}\textbf{A}(\nabla \textbf{u})$ in $L^{\phi^\ast}(Q)$.\\
	This means we have for a general $g\in L^\phi(Q)$ (with therefore $g_\sigma\rightarrow g$ in $L^\phi(Q)$ and $||g_\sigma||_{L^\phi(Q)}$ uniformly bounded):
	
	\begin{align*}
		&\left\vert\dashint_Q \left[\delta_{h,j}\textbf{A}(\nabla \textbf{u})\right]_\sigma g_\sigma-\delta_{h,j}\textbf{A}(\nabla \textbf{u})g \dz\right\vert\\
		\leq&\dashint_Q \left\vert\left[\delta_{h,j}\textbf{A}(\nabla \textbf{u})\right]_\sigma-\delta_{h,j}\textbf{A}(\nabla \textbf{u})\right\vert\,|g_\sigma|\dz+\dashint_Q|\delta_{h,j}\textbf{A}(\nabla u)|\,|g_\sigma-g|\dz\\
		\leq&2\norm{\left[\delta_{h,j}\textbf{A}(\nabla \textbf{u})\right]_\sigma-\delta_{h,j}\textbf{A}(\nabla \textbf{u})}_{L^{\phi^\ast}}\norm{g_\sigma}_{L^\phi}+2\norm{\delta_{h,j}\textbf{A}(\nabla \textbf{u})}_{L^{\phi^\ast}}\norm{g_\sigma-g}_{L^\phi}\rightarrow 0
	\end{align*}
	Using $\delta_{h,j}\nabla \textbf{u}\in L^\phi(Q)$ and dominated convergence we get for $\text{I}_1$:
	\begin{align*}
		&\left\vert\dashint_Q\left[ \delta_{h,j}\textbf{A}(\nabla u)\right]_\sigma (\delta_{h,j}\nabla \textbf{u}_\sigma)f(|\delta_{h}\textbf{u}_\sigma|)\rho(t)\eta^q-\delta_{h,j}\textbf{A}(\nabla \textbf{u}) \delta_{h,j}\nabla \textbf{u} f(|\delta_{h}\textbf{u}|)\rho(t)\eta^q\dz\right\vert\\
		\leq&\norm{f(|\delta_{h}\textbf{u}_\sigma|)\rho(t)\eta^q}_\infty\dashint_Q|\left[ \delta_{h,j}\textbf{A}(\nabla \textbf{u})\right]_\sigma (\delta_{h,j}\nabla \textbf{u}_\sigma)-\delta_{h,j}\textbf{A}(\nabla \textbf{u}) \delta_{h,j}\nabla \textbf{u}|\dz\\
		+&\norm{\rho(t)\eta^q}_\infty\dashint_Q |\delta_{h,j}\textbf{A}(\nabla \textbf{u}) \delta_{h,j}\nabla \textbf{u}|\, |f(|\delta_{h}\textbf{u}_\sigma|)-f(|\delta_{h}\textbf{u}|)|\dz\rightarrow 0
	\end{align*}
	For $\text{I}_2$ we can use the chain rule since $f$ is globally Lipschitz and differentiable:
	\begin{equation*}
		\text{I}_2=\dashint_Q\left[ \delta_{h,j}\textbf{A}(\nabla \textbf{u})\right]_\sigma f'(|\delta_{h}\textbf{u}_\sigma|)\frac{\delta_{h,k}\textbf{u}_\sigma \nabla\delta_{h,k}\textbf{u}_\sigma}{|\delta_{h}\textbf{u}_\sigma|} \delta_{h,j}\textbf{u}_\sigma\rho(t)\eta^q
	\end{equation*}

	We now see that $f'(|\delta_h \textbf{u}_\sigma|)\delta_{h,j}\textbf{u}_\sigma$ is bounded uniformly in $\sigma$ as $f'(t)t$ is bounded and therefore $\norm{f'(|\delta_{h}\textbf{u}_\sigma|)\frac{\delta_{k,h}\textbf{u}_\sigma\delta_{h,j}\textbf{u}_\sigma}{|\delta_{h}\textbf{u}_\sigma|}}_\infty$ is uniformly bounded in $\sigma$. Using this, $\delta_{k,h}\nabla \textbf{u}\in L^\phi(Q)$ and dominated convergece we get
	\begin{align*}
		&\Big\vert\dashint_Q\left[ \delta_{h,j}\textbf{A}(\nabla \textbf{u})\right]_\sigma \delta_{k,h}\nabla \textbf{u}_\sigma f'(|\delta_{h}\textbf{u}_\sigma|)\frac{\delta_{k,h}\textbf{u}_\sigma\delta_{h,j}\textbf{u}_\sigma}{|\delta_{h}\textbf{u}_\sigma|}\rho(t)\eta^q\\
		-& \delta_{h,j}\textbf{A}(\nabla \textbf{u}) \delta_{k,h}\nabla \textbf{u} f'(|\delta_{h}\textbf{u}|)\frac{\delta_{k,h}\textbf{u}\delta_{h,j}\textbf{u}}{|\delta_{h}\textbf{u}|}\rho(t)\eta^q\dz\Big\vert\\
		\leq&\left\vert\left\vert f'(|\delta_{h}\textbf{u}_\sigma|)\frac{\delta_{k,h}\textbf{u}_\sigma\delta_{h,j}\textbf{u}_\sigma}{|\delta_{h}\textbf{u}_\sigma|}\rho(t)\eta^q\right\vert\right\vert_\infty\dashint_Q|\left[ \delta_{h,j}\textbf{A}(\nabla \textbf{u})\right]_\sigma \delta_{k,h}\nabla \textbf{u}_\sigma-\delta_{h,j}\textbf{A}(\nabla \textbf{u}) \delta_{k,h}\nabla \textbf{u}|\dz\\
		&+\dashint_Q \delta_{h,j}\textbf{A}(\nabla u) \delta_{k,h}\nabla \textbf{u} \left\vert f'(|\delta_{h}\textbf{u}_\sigma|)\frac{\delta_{k,h}\textbf{u}_\sigma\delta_{h,j}\textbf{u}_\sigma}{|\delta_{h}\textbf{u}_\sigma|}-f'(|\delta_{h}\textbf{u}|)\frac{\delta_{k,h}\textbf{u}\delta_{h,j}\textbf{u}}{|\delta_{k,h}\textbf{u}|}\right\vert\rho(t)\eta^q\dz\rightarrow 0
	\end{align*}
	Treating $\text{I}_3$ works the same way as treating $\text{I}_1$ using that $\norm{f(|\delta_{h}\textbf{u}_\sigma|)\rho(t)\nabla\zeta}_\infty$ is uniformly bounded in $\sigma$ and $\delta_{h,j}\textbf{u}\in L^\phi(Q)$:
	\begin{align*}
		&\left\vert\dashint_Q\left(\left[ \delta_{h,j}\textbf{A}(\nabla \textbf{u})\right]_\sigma\delta_{h,j}\textbf{u}_\sigma f(|\delta_{h}\textbf{u}_\sigma|)\rho(t)\nabla(\eta^q) -\delta_{h,j}\textbf{A}(\nabla \textbf{u})\delta_{h,j}\textbf{u} f(|\delta_{h}\textbf{u}|)\rho(t)\nabla(\eta^q) \right)\dz\right\vert\\
		\leq&\norm{ f(|\delta_{h}\textbf{u}_\sigma|)\rho(t)\nabla(\eta^q)}_\infty\dashint_Q |\left[ \delta_{h,j}\textbf{A}(\nabla \textbf{u})\right]_\sigma\delta_{h,j}\textbf{u}_\sigma-\delta_{h,j}\textbf{A}(\nabla \textbf{u})\delta_{h,j}\textbf{u}|\dz\\
		&+\dashint_Q |\delta_{h,j}\textbf{A}(\nabla \textbf{u})\delta_{h,j}\textbf{u}| \,| f(|\delta_{h}\textbf{u}_\sigma|)- f(|\delta_{h}\textbf{u}|)|\left\vert\rho(t)\nabla(\eta^q)\right\vert\dz\rightarrow 0
	\end{align*}
	
	We now want to estimate $\text{II}$ and $\text{III}$ in equation \ref{smoothedversion}. For this reason we first note for $b>a$:
	\begin{equation}\label{HAbschaetzung}
		|H(b)-H(a)|=\dashint_a^b s f(s) \ds\leq \norm{f}_\infty\dashint_a^b s\ds =\frac{\norm{f}_\infty}{2}\left(b^2-a^2\right)
	\end{equation}
	and since $\nabla\textbf{u}\in L^2(Q,\mathbb{R}^m)$ we have $|\delta_h \textbf{u}_\sigma|\rightarrow |\delta_h \textbf{u}|$ in $L^2(Q)$ and get taking the limit $\sigma\rightarrow 0$:
	\begin{align*}
		\text{II}&\rightarrow\frac 1 {R_t} \dashint_B  \left(H(|\delta_h \textbf{u}|)  \eta^q\right)\dx\big \vert_{t=T}\\
		\text{III}&\rightarrow\dashint_Q  H(|\delta_h \textbf{u}|)\partial_t\left(\eta^q\right)\dz
	\end{align*}
	This means we can take the limit $\sigma\rightarrow 0$ and the supremum over all $T\in I$ in equation \ref{smoothedversion} and get
	
	\begin{align}
		\text{I'}+\text{II'}:=&\dashint_Q\delta_{h,j}\textbf{A}(\nabla \textbf{u})\nabla (f(|\delta_{h}\textbf{u}_\sigma|)\delta_{h,j}\textbf{u}\rho(t)\eta^q)\dz\nonumber\\
		+& \frac 1 {R_t}\sup_I\dashint_B  \left(H(|\delta_h \textbf{u}|)  \eta^q\right)\dx\leq\dashint_Q  H(|\delta_h \textbf{u}|)\partial_t\left(\eta^q\right)\dz=:\text{III'}\label{instatdiscretenergy}
	\end{align}
	We now want take the limit $h\rightarrow 0$. Since $\textbf{V}(\nabla \textbf{u})\in L_{\text{loc}}^2(J, W_{\text{loc}}^{1,2}(\Omega))$ (see Theorem \ref{instatheorem1}) we can proceed as in the elliptic case (lemma \ref{statenergylemma}) for term $\text{I'}$. For II' and III' we note that $\textbf{u}\in L_{\text{loc}}^2(J,W_{\text{loc}}^{1,2}(\Omega))$ and therefore $|\delta_h \textbf{u}|\rightarrow v$ in $L^2(Q)$ as $h\rightarrow 0$. Using equation \ref{HAbschaetzung} we get
	\begin{align*}
		\text{II'}&\rightarrow \sup_I\frac 1 {R_t}\dashint_B  H(v)  \eta^q\dx\\
		\text{III'}&\rightarrow \dashint_Q  H(v)\partial_t\left(\eta^q\right)\dz
	\end{align*}
	This means we can take the limit $h\rightarrow0$ in equation \ref{instatdiscretenergy} and multiply by $R_x^2$ to get
	\begin{align*}
		&\sup_I\frac 1 {\alpha}\dashint_B H(v)\eta^q+R_x^2\dashint_Q |\nabla \textbf{V}(\nabla \textbf{u})|^2\eta^q f(v)\\
		\lesssim &R_x^2\dashint_Q|\textbf{V}(\nabla \textbf{u})|^2|\nabla\eta|^2f(v)+R_x^2\dashint_Q H(v)\eta^{q-1}\partial_t\eta
	\end{align*}
\end{proof}
\begin{proof}[Proof of theorem \ref{instatenergy}]
	As in the proof of theorem \ref{statenergy}, we approximate $f$ by a sequence of uniformly bounded, non-decreasing $C^1$ functions $f_k$ with $\lim_{k\rightarrow\infty}f_k(x)=f(x)$ for all $x\in\mathbb{R}^+_0$. As the $f_k$ are uniformly bounded $C^1$ functions we can apply lemma \ref{instatenergylemma} and get with $H_k(t):=\int_0^t sf_k(s)\ds$
	\begin{align}
		\sup_I\dashint_B A_k+&\dashint_Q B_k:=\sup_I\frac 1 {\alpha} \dashint_B H_k(v)\eta^q+R_x^2\dashint_Q |\nabla \textbf{V}(\nabla \textbf{u})|^2\eta^qf_k(v)\nonumber\\
		\lesssim&R_x^2\dashint_Q|\textbf{V}(\nabla \textbf{u})|^2|\nabla\eta|^2f_k(v)+R_x^2\dashint_Q H_k(v)\eta^{q-1}\partial_t\eta=:\dashint_Q C_k+\dashint_Q D_k\label{instatenergyepsilon}
	\end{align}
	We have $\norm{f_k}_\infty\leq M$. As in the proof of theorem \ref{statenergy} $B_k$ is bounded by the $L^1$-function $M |\nabla \textbf{V}(\nabla \textbf{u})|^2\eta^q$ and $C_k$ is bounded by $M|\textbf{V}(\nabla \textbf{u})|^2|\nabla\eta|^2 \in L^1(Q)$.\\ 
	For the other terms we note that $H_k(t)=\int_0^t sf_k(s)\ds\leq M s^2$.This means we have $A_k\leq M\,v^2\eta^q\in L^1(Q)$ and $D_k\leq M v^2\eta^{q-1}\partial_t\eta\in L^1(Q)$. This means we can take the limit $k\rightarrow \infty$ and use dominated convergence to conclude the proof.
\end{proof}

\begin{corollary}\label{instatcor}
	Let $\phi$, $\textbf{u}$ and $v$ be as defined above and denote $G(t):=(\psi(t)-\psi(\gamma))_+$ and ${H(t)=(v^2-\gamma^2)_+}$ with a non-negative real number $\gamma$.\\
	Then we get
	\begin{align}
		&\sup_I\frac 1 \alpha \dashint_B H(v)\eta^q+R_x^2\dashint_Q |\nabla \left(G(v)\eta^{\frac q 2}\right)|^2\nonumber\\
		\lesssim &R_x^2\dashint_Q\phi(v)\nabla\eta|^2\chi_{\{v>\gamma\}}+R_x^2\dashint_Q H(v)\eta^{q-1}\partial_t\eta\label{instatcorineq}
	\end{align}
\end{corollary}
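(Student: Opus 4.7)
The plan is to mimic exactly the argument used in Corollary \ref{statcor}, now using the parabolic energy inequality from Theorem \ref{instatenergy} in place of the elliptic one. First, I would set $f(t) = \chi_{\{t > \gamma\}}$. This is non-negative, non-decreasing, bounded, piecewise continuously differentiable and constant for large arguments, so it is an admissible test function for Theorem \ref{instatenergy}. The resulting $H$ with $H'(t) = t f(t) = t\chi_{\{t>\gamma\}}$ is $H(t) = \tfrac{1}{2}(t^2 - \gamma^2)_+$, which agrees with the statement up to a constant absorbed in $\lesssim$.

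Applying Theorem \ref{instatenergy} directly gives
\begin{equation*}
\sup_I \frac{1}{\alpha}\dashint_B H(v)\eta^q + R_x^2 \dashint_Q |\nabla \textbf{V}(\nabla \textbf{u})|^2 \eta^q \chi_{\{v>\gamma\}} \lesssim R_x^2 \dashint_Q |\textbf{V}(\nabla \textbf{u})|^2 |\nabla \eta|^2 \chi_{\{v>\gamma\}} + R_x^2 \dashint_Q H(v)\eta^{q-1}\partial_t \eta.
\end{equation*}
On the right-hand side I would use the identity $|\textbf{V}(\textbf{Q})|^2 = \psi'(|\textbf{Q}|)^2 = \phi'(|\textbf{Q}|)|\textbf{Q}| \sim \phi(|\textbf{Q}|)$, which turns the first right-hand term into the desired $\phi(v)|\nabla\eta|^2 \chi_{\{v>\gamma\}}$ form.

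For the left-hand side I would follow the same chain-rule step used in Corollary \ref{statcor}: since $|(|\textbf{Q}|)'| \leq 1$ we have $|\nabla \textbf{V}(\nabla \textbf{u})|^2 \geq |\nabla |\textbf{V}(\nabla \textbf{u})||^2 = |\nabla \psi'(v)|^2$, and the truncation identity $(\psi'(v) - \psi'(\gamma))_+$ has gradient equal to $\chi_{\{v>\gamma\}}\nabla \psi'(v)$ a.e., so the integrand dominates $|\nabla G(v)|^2 \eta^q$ (using the elliptic-corollary interpretation of $G$). Combining with the pointwise bound $G(v)^2 \lesssim \phi(v)\chi_{\{v>\gamma\}}$ and $|\nabla \eta^{q/2}| \lesssim |\nabla \eta|$, the product rule gives
\begin{equation*}
|\nabla(G(v)\eta^{q/2})|^2 \lesssim |\nabla G(v)|^2 \eta^q + G(v)^2 |\nabla \eta|^2 \lesssim |\nabla \textbf{V}(\nabla \textbf{u})|^2 \eta^q \chi_{\{v>\gamma\}} + \phi(v)|\nabla \eta|^2 \chi_{\{v>\gamma\}},
\end{equation*}
so after integration in $Q$ the left-hand $|\nabla(G(v)\eta^{q/2})|^2$ term is controlled by the first two terms already present in the parabolic energy bound, and the surviving right-hand side is exactly $\phi(v)|\nabla\eta|^2 \chi_{\{v>\gamma\}}$ plus the time-derivative term $H(v)\eta^{q-1}\partial_t\eta$, which matches the corollary.

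I don't expect any genuine obstacle here: the entire content is already delivered by Theorem \ref{instatenergy}, and the remaining algebra is routine. The only care needed is to apply the chain rule for Sobolev functions to pass from $|\nabla \textbf{V}(\nabla \textbf{u})|^2$ to $|\nabla G(v)|^2$ exactly as in the elliptic corollary, and to recognize that the constant $\tfrac{1}{2}$ appearing between $H'(t) = t\chi_{\{t>\gamma\}}$ and the stated $H(t) = (v^2 - \gamma^2)_+$ is absorbed in $\lesssim$.
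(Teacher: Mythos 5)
Your proposal is correct and follows the paper's own proof almost verbatim: take $f(t)=\chi_{\{t>\gamma\}}$ in Theorem \ref{instatenergy}, use $|\textbf{V}(\nabla\textbf{u})|^2=\psi'(v)^2\sim\phi(v)$ on the right-hand side, and reuse the chain-rule/product-rule argument from Corollary \ref{statcor} on the gradient term. You also correctly read $G$ as $(\psi'(v)-\psi'(\gamma))_+$ (as in the elliptic corollary), which is what the argument in Lemma \ref{energy2} actually requires; the ``$\psi$'' in the statement of Corollary \ref{instatcor} is a typographical slip.
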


\begin{proof}
	We use $f(t)=\chi_{\{t>\gamma\}}$. This leads to $H(t)=\int_\gamma^t s\ds_+=(t^2-\gamma^2)_+$ as claimed. To get $\dashint_Q |\nabla V(\nabla u)|^2\chi_{\{v>\gamma\}}\eta^q\gtrsim\dashint_Q |\nabla \left(G(v)\eta^{\frac q 2}\right)|^2$ we proceed like in the proof of corollary \ref{statcor}. Putting this in the result of theorem \ref{instatenergy} concludes the proof.
\end{proof}

\newpage

\section{De-Giorgi-Techinque}
\subsection{Preliminary Lemmas}
At first we proof two important lemmas.
\begin{lemma}(Fast geometric convergence)\label{converg}
  Let $\alpha>0$,$C>0$ and  $b>1$ be real numbers and $a_k$ a sequence with the properties
  $$ a_{k+1}\leq C b^k a_k^{1+\alpha} $$  
  $$ a_0\leq C^{-\frac{1}{\alpha}}b^{-\frac{1}{\alpha^2}} $$
  Then we have $a_k\leq C^{-\frac{1}{\alpha}} b^{-\frac{1+k\alpha}{\alpha^2}}\rightarrow 0$
\end{lemma}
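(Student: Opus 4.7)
The plan is to prove the stated bound by induction on $k$. The base case $k=0$ is exactly the hypothesis $a_0 \leq C^{-1/\alpha} b^{-1/\alpha^2}$, so no work is needed there. For the inductive step, assume the bound $a_k \leq C^{-1/\alpha} b^{-(1+k\alpha)/\alpha^2}$ and substitute it into the recursion $a_{k+1}\leq C b^k a_k^{1+\alpha}$ to obtain
\begin{equation*}
a_{k+1} \leq C\, b^k \,\bigl( C^{-1/\alpha}\, b^{-(1+k\alpha)/\alpha^2} \bigr)^{1+\alpha}
= C^{1-(1+\alpha)/\alpha}\, b^{\,k - (1+\alpha)(1+k\alpha)/\alpha^2}.
\end{equation*}
The $C$-exponent simplifies to $-1/\alpha$, and the key algebraic check is that the $b$-exponent becomes exactly $-(1+(k+1)\alpha)/\alpha^2$. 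Expanding $(1+\alpha)(1+k\alpha) = 1 + \alpha + k\alpha + k\alpha^2$ and collecting over $\alpha^2$ gives
\begin{equation*}
k - \frac{(1+\alpha)(1+k\alpha)}{\alpha^2}
= \frac{k\alpha^2 - 1 - \alpha - k\alpha - k\alpha^2}{\alpha^2}
= -\,\frac{1+(k+1)\alpha}{\alpha^2},
\end{equation*}
closing the induction.

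Finally, since $b>1$ and the exponent $-(1+k\alpha)/\alpha^2 \to -\infty$ as $k \to \infty$, the right-hand side tends to zero, which gives $a_k \to 0$. There is no real obstacle in this proof; the entire content is the exponent bookkeeping, which has been designed so that the base hypothesis $a_0 \leq C^{-1/\alpha}b^{-1/\alpha^2}$ is precisely the threshold required for the induction to propagate. I would simply present the induction cleanly and highlight the exponent computation as the one line doing the work.
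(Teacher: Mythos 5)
Your proof is correct and follows essentially the same induction as the paper, with the same exponent bookkeeping for both $C$ and $b$. Nothing further is needed.
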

\begin{proof}
 We use induction:\\
 The base case $k=0$ follows directly from the second property.\\
 The induction step is straightforward: Let $a_k\leq C^{-\frac{1}{\alpha}} b^{-\frac{1+k\alpha}{\alpha^2}}$ for some $k$, then we get
  \begin{align*}
   a_{k+1}&\leq C b^k a_k^{1+\alpha}\leq C b^k \left(C^{-\frac{1}{\alpha}} b^{-\frac{1+k\alpha}{\alpha^2}}\right)^{1+\alpha}\\ 
    &\leq C b^k C^{-1-\frac 1 \alpha} b^{-\frac{1+(k+1)\alpha}{\alpha^2}-k}=C^{-\frac{1}{\alpha}} b^{-\frac{1+(k+1)\alpha}{\alpha^2}}
  \end{align*}

\end{proof}

From this we get an easy
\begin{corollary}\label{cor}
 Let $\alpha>0$, $C>0$, $b>1$ and $\gamma$ be real numbers and $a_k$ a sequence with
 $$a_{k+1}\leq C b^k a_k\left(\frac {a_k}{\gamma}\right)^\alpha $$
 Then we have $a_k\rightarrow 0$ if $\gamma=a_0 C^\frac 1 \alpha b^\frac 1 {\alpha^2}$
\end{corollary}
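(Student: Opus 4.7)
The plan is to reduce this directly to Lemma \ref{converg} by absorbing the factor $\gamma^{-\alpha}$ into the constant. Rewrite the recursion as
\begin{equation*}
a_{k+1}\leq \tilde{C}\, b^k\, a_k^{1+\alpha}, \qquad \tilde{C}:=\frac{C}{\gamma^\alpha},
\end{equation*}
which is precisely the hypothesis of Lemma \ref{converg} with constant $\tilde{C}$ in place of $C$.

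Next I would check that the initial-value condition required by the lemma, namely $a_0\leq \tilde{C}^{-1/\alpha} b^{-1/\alpha^2}$, follows from the assumption on $\gamma$. Substituting the definition of $\tilde{C}$ gives
\begin{equation*}
\tilde{C}^{-1/\alpha} b^{-1/\alpha^2}=\gamma\, C^{-1/\alpha}\, b^{-1/\alpha^2},
\end{equation*}
so the condition $a_0\leq \tilde{C}^{-1/\alpha}b^{-1/\alpha^2}$ is equivalent to $\gamma\geq a_0 C^{1/\alpha} b^{1/\alpha^2}$, which is exactly the (equality case of the) hypothesis.

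Having verified both hypotheses, Lemma \ref{converg} applied to the sequence $a_k$ with constant $\tilde{C}$ yields
\begin{equation*}
a_k\leq \tilde{C}^{-1/\alpha} b^{-(1+k\alpha)/\alpha^2}\longrightarrow 0 \quad\text{as } k\to\infty,
\end{equation*}
which is the desired conclusion. There is no real obstacle here; the only thing to be careful about is keeping track of the algebra in the exponents when translating the condition $a_0\leq \tilde{C}^{-1/\alpha}b^{-1/\alpha^2}$ back into a statement about $\gamma$.
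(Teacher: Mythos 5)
Your proof is correct and amounts to the same reduction the paper uses: the paper applies Lemma \ref{converg} to the rescaled sequence $a_k/\gamma$ with the constant $C$ unchanged, while you keep the sequence and instead rescale the constant to $\tilde{C}=C/\gamma^\alpha$; these are the same change of variables, and both lead to verifying that the hypothesis on $\gamma$ forces $a_0 \leq \tilde{C}^{-1/\alpha}b^{-1/\alpha^2}$ (equivalently $a_0/\gamma \leq C^{-1/\alpha}b^{-1/\alpha^2}$) with equality.
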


\begin{proof}
 Use Lemma \ref{converg} on the sequence $\frac{a_k}{\gamma}$.
\end{proof}

\begin{lemma}\label{Dif}
 Let $h\in C^1(\mathbb{R}^+_0)$ be an increasing function with $h(0)=0$, $h(2t)\leq d h(t)$ and $h'(t)\sim \frac{h(t)}{t}$ and let $c\in\mathbb{R}^+$ be a constant and define $c_k=c\left(1-2^{-k}\right)$.\\
 Then we have for $v>c_{k+1}$
 $$h(v)\lesssim 2^{k+1}\left(h(v)-h(c_k)\right)_+ $$
 and the constant only depends on $h$.
\end{lemma}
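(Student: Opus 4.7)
The plan is a clean dichotomy, according to whether $h(c_k)$ is small or large compared to $h(v)$.

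If $h(c_k) \le \tfrac12 h(v)$, then $h(v)-h(c_k) \ge \tfrac12 h(v) \ge 2^{-(k+1)} h(v)$, so the claim is immediate with constant $2$. This trivially covers the case $k=0$ (where $c_0=0$ so $h(c_0)=0$), and we are left with the complementary regime.

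So suppose $h(c_k) > \tfrac12 h(v)$. The first step is to observe that in this regime $v$ is automatically comparable to $c$: the hypothesis $h'(t)\sim h(t)/t$ supplies a constant $a>0$ with $(\log h)'(t) \ge a/t$, which integrates on $[c_k,v]$ to
\begin{equation*}
\frac{h(v)}{h(c_k)} \;\ge\; \left(\frac{v}{c_k}\right)^{a}.
\end{equation*}
Combined with $h(v)<2h(c_k)\le 2h(c)$ this forces $v \le Mc$ for a constant $M=2^{1/a}$ depending only on $h$.

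The second step is a mean-value estimate on the short interval $(c_k,v)$. Since $c_{k+1}-c_k = c\cdot 2^{-(k+1)}$, the assumption $v>c_{k+1}$ gives $v-c_k > c\cdot 2^{-(k+1)}$. Using monotonicity of $h$ together with $h'(t)\gtrsim h(t)/t$,
\begin{equation*}
h(v)-h(c_k) \;=\; \int_{c_k}^v h'(t)\,dt \;\gtrsim\; \int_{c_k}^v \frac{h(t)}{t}\,dt \;\ge\; \frac{h(c_k)}{v}(v-c_k) \;\gtrsim\; \frac{h(v)}{v}\cdot\frac{c}{2^{k+1}} \;\gtrsim\; \frac{h(v)}{2^{k+1}},
\end{equation*}
where the last inequality uses $v\le Mc$ from the first step and $h(c_k)>\tfrac12 h(v)$. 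Rearranging yields $h(v) \lesssim 2^{k+1}(h(v)-h(c_k))$ with a constant depending only on $h$.

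The one substantive point is extracting the bound $v\lesssim c$ in the hard regime; the rest is a one-line mean value computation, with the factor $2^{-(k+1)}$ emerging naturally from $v-c_k > c/2^{k+1}$. Note that the condition $h(2t)\le d h(t)$ is in fact a consequence of $h'(t)\sim h(t)/t$, so only the latter hypothesis is really used.
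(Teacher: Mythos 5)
Your proof is correct, but it takes a genuinely different route from the paper's.

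The paper argues purely algebraically at first: writing $h(v) = (h(v)-h(c_k)) + h(c_k)$, multiplying and dividing the second term by $h(c_{k+1})-h(c_k)$, and then using $h(c_{k+1})-h(c_k)\le h(v)-h(c_k)$ to obtain $h(v) \le \frac{h(c_{k+1})}{h(c_{k+1})-h(c_k)}\,(h(v)-h(c_k))_+$. This cleverly isolates a prefactor that involves only $c_k,c_{k+1}$, both comparable to $c$, so no control on $v$ relative to $c$ is ever needed. The prefactor is then bounded by $d\,2^{k+1}$ via the mean value theorem on $(c_k,c_{k+1})$, using $h'(t)\sim h(t)/t$ and the doubling constant $d$. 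Your proof instead opens with a dichotomy on whether $h(c_k)$ is small or large relative to $h(v)$. The easy half is trivial (and subsumes $k=0$). The hard half requires the additional — and quite nice — observation that once $h(c_k)>\tfrac12 h(v)$, integrating the lower bound on $(\log h)'$ forces $v\lesssim c$, after which a direct integral estimate on $(c_k,v)$ closes the argument. So: the paper sidesteps the comparison between $v$ and $c$ by an algebraic preconditioning step and then a one-point MVT; you need that comparison, extract it from the hypothesis, and pay for it with the dichotomy. Your version buys a bit of transparency about where the factor $2^{k+1}$ comes from ($v-c_k>c\,2^{-(k+1)}$), and correctly notes as a bonus that the hypothesis $h(2t)\le d\,h(t)$ is redundant, being implied by $h'(t)\sim h(t)/t$; the paper's version is shorter because it never has to relate $v$ to $c$.
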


\begin{proof}
  We calculate:\\
  \begin{align*}
   h(v)&=h(v)-h(c_k)+h(c_k)\\
   &=h(v)-h(c_k)+\frac{h(c_k)}{h(c_{k+1})-h(c_k)}\left(h(c_{k+1})-h(c_k)\right)\\
   &\leq(h(v)-h(c_k))\frac{h(c_{k+1})}{h(c_{k+1})-h(c_k)}\\
   &\leq\frac{h(c_{k+1})}{h(c_{k+1})-h(c_k)}(h(v)-h(c_k))_+
  \end{align*}
  If we have $k=0$, we have $h(c_0)=0$ and the therefore $\frac{h(c_{k+1})}{h(c_{k+1})-h(c_k)}=1$.\\
  For the case $k\geq 1$ we use the intermediate value theorem of differential calculus and for some $t\in(c_k,c_{k+1})$ (implying $\frac c 2 \leq t \leq c$) we get
  \begin{align*}
		\frac{h(c_{k+1})}{h(c_{k+1})-h(c_k)}&=\frac{h(c_{k+1})}{h'(t)\left(c_{k+1}-c_k\right)}\\
		&\sim \frac{h(c_{k+1})t}{h(t)\left(c\left(2^{-k}-2^{-k-1}\right)\right)}\\
		&\lesssim \frac{h(c)}{h\left(\frac c 2\right)}2^{k+1}\\
		&\leq d 2^{k+1}
  \end{align*}
  
\end{proof}
\newpage

\subsection{The elliptic case}\label{secStatBound}
We will start directly with the main theorem of this section
\begin{theorem}\label{statphifinal}
 Let $\phi$ be an N-function with $\Delta_2(\{\phi,\phi^\ast\})<\infty$ which satisfies assumption \ref{mainassumption}, let $\textbf{u}\in W^{1,\phi}_{\text{loc}}(\Omega,\mathbb{R}^m)$ be a local weak solution to $\Delta_\phi \textbf{u} =0$ on a domain $\Omega\subset \mathbb{R}^n$ and $B\subset \Omega$ a ball of radius $R$ with $2B\Subset \Omega$. Furthermore, we denote $v:=|\nabla \textbf{u}|$.\\
 Then we have 
 \begin{equation*}
  \sup_B\phi(v)\lesssim\dashint_{2B}\phi(v)
 \end{equation*}
\end{theorem}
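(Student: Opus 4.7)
The plan is to execute a De Giorgi iteration, feeding the energy estimate from Corollary \ref{statcor} into Sobolev and closing with Corollary \ref{cor}. Set $\beta:=\dashint_{2B}\phi(v)$, and choose a target level $c>0$ to be fixed at the end so that $\phi(c)\sim\beta$. Introduce the shrinking balls $B_k$ of radius $R_k:=R(1+2^{-k})$ (so $B_0=2B$, $B_k\searrow B$), cutoffs $\eta_k\in C_0^\infty(B_k)$ with $\eta_k\equiv 1$ on $B_{k+1}$ and $|\nabla\eta_k|\lesssim 2^k/R$, and the increasing truncation levels $c_k:=c(1-2^{-k})$ from Lemma \ref{Dif}. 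The iterated quantity will be
$$a_k:=\dashint_{B_k}\phi(v)\chi_{\{v>c_k\}},\qquad a_0=\dashint_{2B}\phi(v)=\beta.$$
I aim for a recursion $a_{k+1}\leq C\,b^k\,a_k(a_k/\phi(c))^{2/n}$, whence Corollary \ref{cor} with $\alpha=2/n$ and $\gamma=\phi(c)$ forces $a_k\to 0$ provided $\phi(c)$ is a sufficiently large multiple of $\beta$. From $a_k\to 0$ together with $c_k\nearrow c$ and $B_k\searrow B$, one reads off $v\leq c$ a.e.\ on $B$, hence $\phi(v)\leq\phi(c)\lesssim\dashint_{2B}\phi(v)$.

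To extract the recursion, apply Corollary \ref{statcor} on $B_k$ with cutoff $\eta_k$ at level $c_k$ to obtain
$$\dashint_{B_k}|\nabla(G_k(v)\eta_k^{q/2})|^2\lesssim\frac{2^{2k}}{R^2}\,a_k,\qquad G_k(t):=(\psi'(t)-\psi'(c_k))_+,$$
and then invoke the Sobolev embedding (for $n\geq 3$; $n=1,2$ are routine variants) to get
$$\left(\dashint_{B_k}(G_k(v)\eta_k^{q/2})^{2^*}\right)^{2/2^*}\lesssim 2^{2k}\,a_k,\qquad 2^*=\tfrac{2n}{n-2}.$$
The key upgrade from $G_k^2$ back to $\phi(v)$ on $\{v>c_{k+1}\}$ comes from Lemma \ref{Dif} applied to $h=\psi'$: since $\psi$ also obeys Assumption \ref{mainassumption} (as noted in the proof of Theorem \ref{phiestimates}), $\psi'$ is $C^1$, $\Delta_2$, and satisfies $(\psi')'(t)\sim\psi'(t)/t$, so Lemma \ref{Dif} gives $\psi'(v)\lesssim 2^{k+1}G_k(v)$ for $v>c_{k+1}$; squaring and recalling $\phi(t)\sim t\phi'(t)=\psi'(t)^2$ yields $\phi(v)\lesssim 2^{2k}G_k(v)^2$ on that set.

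Since $\eta_k\equiv 1$ on $B_{k+1}$, an application of Hölder with exponents $(n/(n-2), n/2)$, together with the Chebyshev-type bound
$$\frac{|\{v>c_{k+1}\}\cap B_{k+1}|}{|B_{k+1}|}\leq\frac{1}{\phi(c_{k+1})}\dashint_{B_{k+1}}\phi(v)\chi_{\{v>c_k\}}\lesssim\frac{a_k}{\phi(c)}$$
(where $\phi(c_{k+1})\sim\phi(c)$ uniformly in $k$ because $c_{k+1}\geq c/2$ and $\phi$ is $\Delta_2$), then assembles
$$a_{k+1}\lesssim 2^{2k}\cdot 2^{2k}\,a_k\cdot\left(\frac{a_k}{\phi(c)}\right)^{2/n}=C\,16^k\,a_k\left(\frac{a_k}{\phi(c)}\right)^{2/n},$$
which is exactly the form required by Corollary \ref{cor}.

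The main obstacle I anticipate is the bookkeeping: keeping track of three competing powers of $2^k$ (from the cutoff, from Lemma \ref{Dif}, and from the Sobolev–Hölder combination) and ensuring that the threshold in Corollary \ref{cor} translates into a bound $\phi(c)\lesssim\beta$ rather than something weaker that would fail to close. Verifying that $\psi'$ really satisfies the hypotheses of Lemma \ref{Dif}, and that the $\Delta_2$ constant suffices to absorb $\phi(c_{k+1})^{-1}$ by $\phi(c)^{-1}$ uniformly in $k$, are the small technical points to double-check; once these are in place, the iteration closes and the theorem follows.
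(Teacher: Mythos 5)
Your proposal is correct and follows essentially the same path as the paper's proof: a De Giorgi iteration fed by the energy estimate of Corollary \ref{statcor}, Sobolev embedding, the comparison $\phi^{1/2}(v)\lesssim 2^k G_k(v)$ on $\{v>c_{k+1}\}$ from Lemma \ref{Dif}, a Chebyshev bound on the superlevel sets, and Corollary \ref{cor} to close. The only differences are cosmetic (the paper keeps the cutoff $\zeta_k^q$ inside its iterated quantity $W_k$, while you integrate against the raw indicator $\chi_{B_k}$), and the side checks you flag---that $\psi'$ satisfies the hypotheses of Lemma \ref{Dif}, and that $\phi(c_{k+1})\sim\phi(c)$ uniformly in $k$ by the $\Delta_2$-condition---are precisely the verifications the paper makes.
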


\begin{proof}
	We define
	\begin{align*}
		B_k:&=B(1+2^{-k})\\
		\zeta_k&\in C_0^\infty \text{ with }\\
		\chi_{B_k}&\leq\zeta_k\leq\chi_{B_{k+1}}\\
		|\nabla \zeta_k|&\lesssim \frac{2^k}{R}\\
		\gamma_k:&=\gamma_\infty(1-2^{-k})
	\end{align*}
	where $\gamma_\infty\in\mathbb{R}^+$ is a constant to be chosen later.\\
	In the end we want to use Corollary \ref{cor} on the sequence $W_k:=\norm{\phi(v)\chi_{\{v>\gamma_{k}\}}\zeta_{k}^q}_1$ where $q\geq 2$ is chosen such that $\phi(\zeta_k^{q-1} t)\leq \zeta_k^q\phi(t)$ for all $k\in \mathbb{N}$. We estimate:
	\begin{align*}
		W_{k+1}=\norm{\phi(v)\chi_{\{v>\gamma_{k+1}\}}\zeta_{k+1}^q}_1&\leq\norm{\phi(v)\chi_{\{v>\gamma_{k+1}\}}\zeta_{k+1}^q}_{\frac{n}{n-2}}\norm{\chi_{\{v>\gamma_{k+1}\}}\chi_{\supp \zeta_{k+1}}}_\frac 2 n\\
		&\leq \norm{\phi^\frac 1 2 (v)\chi_{\{v>\gamma_{k+1}\}}\zeta_{k+1}^{\frac q 2}}_{\frac{2n}{n-2}}^2\norm{\chi_{\{v>\gamma_{k+1}\}}\chi_{\supp \zeta_{k+1}}}_\frac 2 n
	\end{align*}
	\sloppy We now observe that with $\psi'(t)=\sqrt{\phi'(t)t}\sim\phi^{\frac 1 2}$ the assumptions of lemma \ref{Dif} are fulfilled because of $\Delta_2(\phi)<\infty$ and we get ${\phi^\frac 1 2(t)}\lesssim {2^{k+1}(\psi'(t)-\psi'(\gamma_k))_+}{=:2^{k+1}G_k(t)}$ like in corollary \ref{statcor} for $v>\gamma_{k+1}$. We use this and Sobolev's inequality where we note that the Sobolev constant is proportional to $R^2$:
	\begin{align*}
		& \norm{\phi^\frac 1 2 (v)\chi_{\{v>\gamma_{k+1}\}}\zeta_{k+1}^{\frac q 2}}_{\frac{2n}{n-2}}^2\norm{\chi_{\{v>\gamma_{k+1}\}}\chi_{\supp \zeta_{k+1}}}_\frac 2 n\\
		\lesssim &2^{2k+2}\norm{G_k(v)\zeta_{k+1}^{\frac q 2}}_{\frac{2n}{n-2}}^2\norm{\chi_{\{v>\gamma_{k+1}\}}\chi_{\supp \zeta_{k+1}}}_\frac 2 n\\
		\lesssim &2^{2k+2}R^2\norm{\nabla\left(G_k(v)\zeta_{k+1}^{\frac q 2}\right)}_2^2\norm{\chi_{\{v>\gamma_{k+1}\}}\chi_{\supp \zeta_{k+1}}}^\frac 2 n_1
	\end{align*}
	Now we can apply corollary \ref{statcor} on the first factor. For the second factor we see that using $\chi_{\{v>\gamma_{k+1}\}}^a=\chi_{\{v>\gamma_{k+1}\}}$ and $\zeta_k\equiv1$ on $\supp \zeta_{k+1}$ we get:
	\begin{align*}
		&\norm{\phi(v)\chi_{\{v>\gamma_{k}\}}\zeta_{k}^q}_a\geq\norm{\phi(v)\chi_{\{v>\gamma_{k+1}\}}\zeta_{k}^q}_a\\
		\geq&\phi(\gamma_{k+1})\norm{\chi_{\{v>\gamma_{k+1}\}}\zeta_{k}^q}_a
		\geq\phi(\gamma_{k+1})\norm{\chi_{\{v>\gamma_{k+1}\}}\chi_{\supp\zeta_{k+1}}}_a
	\end{align*}
	Putting this in our estimate gives
	\begin{align*}
		&2^{2k+2}R^2\norm{\nabla\left(G_k(v)\zeta_{k+1}^{\frac q 2}\right)}_2^2\norm{\chi_{\{v>\gamma_{k+1}\}}\chi_{\supp \zeta_{k+1}}}^\frac 2 n_1\\
		\lesssim &2^{2k+2}R^2\norm{\phi(v)\chi_{v>\gamma_k}|\nabla\zeta_{k+1}|^2}_1\left(\frac{\norm{\phi(v)\chi_{\{v>\gamma_{k}\}}\zeta_{k}^q}_1}{\phi(\gamma_{k+1})}\right)^\frac 2 n
	\end{align*}
	We now observe that $\gamma_{k+1}=\gamma_\infty\left(1-2^{-(k+1)}\right)\geq\frac{\gamma_\infty}{2}$ and therefore $\phi(\gamma_{k+1})\geq\phi\left(\frac{\gamma_\infty}{2}\right)\geq\Delta_2(\phi)\phi(\gamma_\infty)$ and using $|\nabla \zeta|^2\leq 2^{2k}R^{-2}\chi_{\supp\zeta_{k+1}}\leq2^{2k}R^{-2}\zeta_k^q$ we get
	\begin{align*}
		& 2^{2k+2}R^2\norm{\phi(v)\chi_{v>\gamma_k}|\nabla\zeta_{k+1}|^2}_1\left(\frac{\norm{\phi(v)\chi_{\{v>\gamma_{k}\}}\zeta_{k}^q}_1}{\phi(\gamma_{k+1})}\right)^\frac 2 n\\
		\lesssim &2^{4k}\norm{\phi(v)\chi_{v>\gamma_k}\zeta^q_{k}}_1\left(\frac{\norm{\phi(v)\chi_{\{v>\gamma_{k}\}}\zeta_{k}^q}_1}{\phi(\gamma_{\infty})}\right)^\frac 2 n = 2^{4k}W_k\left(\frac{W_k}{\phi\left(\gamma_\infty\right)}\right)^{\frac{2}{n}}
	\end{align*}
	In total we have $W_{k+1}\lesssim2^{4k}W_k\left(\frac{W_k}{\phi\left(\gamma_\infty\right)}\right)^{\frac{2}{n}}$ and can apply corollary \ref{cor} on $W_k$. This means we have $W_k\rightarrow 0$ if $\phi(\gamma_\infty)\sim W_0$ but this gives $\chi_{v>\gamma_\infty}=0$ and therefore $\phi(v)\leq\phi(\gamma_\infty)$ on $\supp\zeta_\infty=B$. So in the end we get on $B$:
	\begin{equation*}
		\phi(v)<\phi\left(\gamma_\infty\right)\sim a_0=\dashint_{2B}\phi(v)\chi_{v>0}\zeta_0^2\leq\dashint_{2B}\phi(v)
	\end{equation*}
\end{proof}
\newpage
\subsection{The parabolic case}\label{secInstatBound}
At first we define for a sequence of $C_0^\infty$-functions $\zeta_k$ the norm
$$\norm{f}_{L^s\left(L^r\right)(k)}:=\norm{\norm{f}_{L^s\left(\zeta_k^q\dx \right)}}_{L^r(\dt)}=\left(\dashint\left(\dashint f^r\zeta_k^q\dx\right)^{\frac s r}\dt\right)^{\frac 1 s} $$
and based on this 
\begin{align*}
	Y_k&:=\norm{\phi(v)\chi_{\{v>\gamma_{k}\}}}_{L^1\left(L^1\right)(k)} \\
	Z_k&:=\norm{v^2\chi_{\{v>\gamma_{k}\}}}_{L^1\left(L^1\right)(k)} \\
	W_k&:= Y_k+\frac 1 \alpha Z_k
\end{align*}

\begin{lemma}\label{energy2}
	Let $\textbf{u}\in L^\phi_{\text{loc}}(J\times\Omega,\mathbb{R}^m)\cap C_\text{loc}(I,L^2_{\text{loc}}(\Omega,\mathbb{R}^m))$ with $v:=|\nabla \textbf{u}|\in L^\phi_{\text{loc}}(J\times\Omega)\cap L^2_{\text{loc}}(\Omega)$ be a local weak solution to $\partial_t\textbf{u}-\Delta_\phi \textbf{u}=0$ on a cylindrical domain $J\times\Omega\subset\mathbb{R}^{1+n}$ and let $Q=I\times B \subset \mathbb{R}^{1+n}$ be a cylinder in space-time with Radius $R_x$ in space and height $R_t$ in time with $R_t=\alpha R_x^2$. Let the sequences $\zeta_k\in C_0^\infty\left(\mathbb{R}^{1+n}\right)$ and $\gamma_k\in\mathbb{R}^+$ have the following properties:
	\begin{align*}
		Q_k&=2\left(1+2^{-k}\right)Q=:I_k\times B_k\\
		\chi_{Q_k}&\leq\zeta_k\leq\chi_{Q_{k+1}}\\
		\left\vert\nabla \left(\zeta^{\frac{n-2}{n}}_k\right)\right\vert&\lesssim R_x^{-1}2^k\\
		\left\vert\partial_t \left(\zeta^{\frac{n-2}{n}}_k\right)\right\vert&\lesssim R_t^{-1}2^k\\
		\gamma_k&=\gamma_\infty\left(1-2^{-k}\right)
	\end{align*}
	Then we have
	\begin{align}
		\norm{v^2\chi_{\{v>\gamma_{k+1}\}}}_{L^\infty\left(L^1\right)(k+1)}&\lesssim 2^{3k} \alpha W_k \label{instatabsch1}\\
		\norm{\phi(v)\chi_{\{v>\gamma_{k+1}\}}}_{L^1\left(L^{\frac{n}{n-2}}\right)(k+1)}&\lesssim 2^{3k} W_k\label{instatabsch2}
	\end{align}
\end{lemma}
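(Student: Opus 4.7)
The plan is to apply Corollary \ref{instatcor} with truncation level $\gamma=\gamma_k$ and a smooth cutoff $\eta$ chosen so that $\eta\equiv 1$ on $\supp\zeta_{k+1}$, $\supp\eta\subset\{\zeta_k=1\}$, $|\nabla\eta|\lesssim 2^k/R_x$, $|\partial_t\eta|\lesssim 2^k/R_t$, and additionally $\eta^{qn/(n-2)}\geq\zeta_{k+1}^q$. Taking $\eta\sim\zeta_{k+1}^{(n-2)/n}$ (appropriately smoothed) meets all these requirements, which is precisely why the hypotheses are stated in terms of $|\nabla\zeta_k^{(n-2)/n}|$ rather than $|\nabla\zeta_k|$. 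Lemma \ref{Dif} will then convert the truncated quantities $H(v)=(v^2-\gamma_k^2)_+$ and $G(v)=(\psi'(v)-\psi'(\gamma_k))_+$ back to $v^2$ and $\phi(v)$ on the level set $\{v>\gamma_{k+1}\}$.

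Using $R_t=\alpha R_x^2$, $H(v)\leq v^2\chi_{\{v>\gamma_k\}}$, $\eta^{q-1}\leq 1$, and $\chi_{\supp\eta}\leq\zeta_k^q$, the right-hand side of the energy inequality is bounded by
\begin{equation*}
R_x^2\dashint_Q\phi(v)|\nabla\eta|^2\chi_{\{v>\gamma_k\}}+R_x^2\dashint_Q H(v)\eta^{q-1}|\partial_t\eta|\lesssim 2^{2k}Y_k+\frac{2^k}{\alpha}Z_k\lesssim 2^{2k}W_k.
\end{equation*}
For \ref{instatabsch1}, the first term on the left-hand side of Corollary \ref{instatcor} dominates $\alpha^{-1}\sup_I\dashint_B H(v)\zeta_{k+1}^q$. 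Lemma \ref{Dif} applied to $h(t)=t^2$ gives $v^2\chi_{\{v>\gamma_{k+1}\}}\lesssim 2^k H(v)$, so multiplying the above display by $2^k\alpha$ produces \ref{instatabsch1} with the claimed factor $2^{3k}\alpha W_k$.

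For \ref{instatabsch2}, I would exploit the second left-hand term $R_x^2\dashint_Q|\nabla(G(v)\eta^{q/2})|^2$ together with the averaged Sobolev embedding applied slice by slice in time, $(\dashint_B f^{2n/(n-2)})^{(n-2)/n}\lesssim R_x^2\dashint_B|\nabla f|^2$ with $f=G(v)\eta^{q/2}$. Integrating in time and using $\eta^{qn/(n-2)}\geq\zeta_{k+1}^q$ gives
\begin{equation*}
\dashint_I\Big(\dashint_B G(v)^{\frac{2n}{n-2}}\zeta_{k+1}^q\,dx\Big)^{\frac{n-2}{n}}dt\lesssim 2^{2k}W_k.
\end{equation*}
Finally, Lemma \ref{Dif} applied to $h=\psi'$ (whose hypotheses are inherited from $\psi$, see Lemma \ref{psi}) gives $\psi'(v)\chi_{\{v>\gamma_{k+1}\}}\lesssim 2^k G(v)$, hence $\phi(v)\chi\sim\psi'(v)^2\chi\lesssim 2^{2k}G(v)^2$; inserting this pointwise bound into the $L^{n/(n-2)}$ norm yields the claimed estimate (up to a harmless polynomial factor in $2^k$, as the De Giorgi iteration in the sequel is insensitive to such factors).

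The main obstacle is the bookkeeping of cutoff powers: three distinct exponents enter the argument, namely $\zeta_k^q$ in the norm definitions, $\eta^q$ in the $H$-term of the energy inequality, and $\eta^{q/2}$ inside the Sobolev gradient. The constraint $\eta^{qn/(n-2)}\geq\zeta_{k+1}^q$ dictates $\eta\sim\zeta_{k+1}^{(n-2)/n}$, and verifying that this choice retains the required gradient bounds (the naive derivative of $\zeta^{(n-2)/n}$ introduces a factor $\zeta^{-2/n}$ which could blow up near $\partial\supp\zeta$) is exactly what the hypothesis $|\nabla\zeta_k^{(n-2)/n}|\lesssim 2^k R_x^{-1}$ is engineered to handle.
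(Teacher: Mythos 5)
Your proposal is correct and follows essentially the same route as the paper: apply Corollary \ref{instatcor} with $\eta=\zeta_{k+1}^{(n-2)/n}$ and truncation level $\gamma_k$, bound the right-hand side by $2^{2k}W_k$ using the cutoff gradient hypotheses, then use Lemma \ref{Dif} (with $h(t)=t^2$ for \ref{instatabsch1} and with $h\sim\phi^{1/2}\sim\psi'$ for \ref{instatabsch2}, the latter combined with the $R_x$-scaled Sobolev inequality). Your closing remark on the exponent bookkeeping correctly identifies why the hypotheses are phrased in terms of $\zeta_k^{(n-2)/n}$, which is exactly the role this normalization plays in the paper's proof.
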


\begin{proof}
	We recall the energy inequality \ref{instatcorineq} from corollary \ref{instatcor} with $\eta=\left(\zeta^{\frac{n-2}{n}}_{k+1}\right)$:
	\begin{align}
		&\sup_I\frac 1 \alpha \dashint_B H_k(v)\zeta_{k+1}^{q\frac{n}{n-2}}\dx+R_x^2\dashint_Q\left\vert\nabla\left(G\zeta_{k+1}^{\frac q 2 \frac{n}{n-2}}\right)\right\vert^2\dz\nonumber\\
		\lesssim&R_x^2\dashint_Q \phi(v)\left\vert\nabla\left(\zeta_{k+1}^{\frac{n}{n-2}}\right)\right\vert^2\chi_{\{v>\gamma_{k+1}\}}\dz+R_x^2\dashint_Q H(v)\zeta_{k+1}^{(q-1)\frac{n}{n-2}}\partial_t\left(\zeta^{\frac{n}{n-2}}\right)\dz\label{energyzeta}
	\end{align}
	
	At first we estimate the terms on the right hand side of \ref{energyzeta} and note that $\zeta_k\equiv 1$ on $\supp \zeta_{k+1}$:
	
	\begin{align*}
		R_x^2\dashint_Q \phi(v) \chi_{v>\gamma_k}\left\vert\nabla\left(\zeta_{k+1}^{\frac{n-2}{n}}\right)\right\vert^2\dz&\lesssim2^{2k}\dashint_Q \phi(v) \chi_{v>\gamma_k}\chi_{\supp\chi_{k+1}}\dz\\
		&\leq2^{2k}\dashint_Q \phi(v) \chi_{v>\gamma_k}\zeta_k^q\dz\\
		&=2^{2k}Y_k
	\end{align*}

	\begin{align*}
		R_x^2\dashint_Q H_k(v)\left(\zeta^{\frac{n-2}{n}}_{k+1}\right)^{q-1}\left\vert\partial_t\left(\zeta^{\frac{n-2}{n}}_{k+1}\right)\right\vert\dz&\lesssim\frac{2^{k+1}R_x^2}{R_t}\dashint_Q v^2 \chi_{v>\gamma_k}\chi_{\supp\chi_{k+1}}\dz \\
		&\lesssim\frac{2^{k}}{\alpha}\dashint_Q v^2 \chi_{v>\gamma_k}\zeta_k^q\dz\\
		&=\frac{2^{k}}{\alpha}Z_k\leq \frac{2^{2k}}{\alpha}Z_k
	\end{align*}	
	Putting this in \ref{energyzeta} gives
	\begin{equation}\label{instatenergyright}
		\sup_I\frac 1 \alpha \dashint_B H_k(v)\zeta_{k+1}^{q\frac{n}{n-2}}\dx+R_x^2\dashint_Q\left\vert\nabla\left(G\zeta_{k+1}^{\frac q 2 \frac{n}{n-2}}\right)\right\vert^2\dz\lesssim 2^{2k}W_k
	\end{equation}

	To prove \ref{instatabsch1} we use lemma \ref{Dif} with $h(t)=t^2$ to get $v^2\lesssim 2^k H_k(v)$ for $v>\gamma_{k+1}$ and we see that $\zeta\leq\zeta^{\frac {n-2} n}$ as $0\leq\zeta\leq 1$. Putting this in \ref{instatenergyright} gives
	\begin{align*}
		\norm{v^2\chi_{\{v>\gamma_{k+1}\}}}_{L^\infty\left(L^1\right)(k+1)}&=\alpha\sup_{I}\frac{1}{\alpha}\dashint_B v^2\chi_{\{v>\gamma_{k+1}\}}\zeta_{k+1}^q\dx\\
		&\lesssim \alpha2^k\sup_{I}\frac{1}{\alpha}\dashint_B H_k(v)\left(\zeta^{\frac{n-2}{n}}_{k+1}\right)^q\dx\\
		&\lesssim \alpha2^{3k}W_k
	\end{align*}
	
	For inequality \ref{instatabsch2} we set $h(t)=\phi(t)^{\frac{1}{2}}$ in lemma \ref{Dif} and get $\phi(t)^{\frac 1 2}\lesssim 2^k G_k(t)$ for $t>\gamma_{k+1}$ like in the elliptic case. We also use Sobolev's inequality and note that its constant is proportional to $R_x^2$.
	\begin{align*}
		\norm{\phi(v)\chi_{\{v>\gamma_{k+1}\}}}_{L^1\left(L^{\frac{n}{n-2}}\right)(k+1)}&=\norm{\norm{\phi(v)\chi_{\{v>\gamma_{k+1}\}}\zeta_{k+1}^{q\frac{n-2}{n}}}_{L^{\frac{n}{n-2}}(\dx)}}_{L^1(\dt)}\\
		&=\norm{\norm{\phi(v)^{\frac 1 2}\chi_{\{v>\gamma_{k+1}\}}\zeta_{k+1}^{\frac q 2\frac{n-2}{n}}}_{L^{\frac{2n}{n-2}}(\dx)}^2}_{L^1(\dt)}\\
		&\lesssim 2^k\norm{\norm{G_k(v)\zeta_{k+1}^{\frac q 2 \frac{n-2}{n}}}_{L^{\frac{2n}{n-2}}(\dx)}^2}_{L^1(\dt)}\\
		&\lesssim 2^kR_x^2\norm{\norm{\nabla\left(G_k(v)\zeta^{\frac q 2\frac{n-2}{n}}\right)}_{L^2(\dx)}^2}_{L^1(\dt)}\\
		&=2^k R_x^2\dashint \left\vert\nabla\left(G_k(v)\zeta^{\frac q 2\frac{n-2}{n}}\right)\right\vert^2\dz\\
		&\lesssim 2^{3k}W_k
	\end{align*}
\end{proof}
We will now specialize to the case $\phi(t)=t^p$. To find the optimal upper bound in the parabolic $p$-Laplacian case we want to use all the information we get from the lemma we have just proved. With the weak type estimate
\begin{equation}\label{weaktype}
	\norm{v\chi_{\{v>\gamma_{k+1}\}}}_{L^r(L^q)(k+1)}>\gamma_{k+1}\norm{\chi_{\{v>\gamma_{k+1}\}}}_{L^r(L^q)(k+1)}
\end{equation}
we get
\begin{equation}\label{pinfo}
	\begin{aligned}
	\norm{v\chi_{\{v>\gamma_{k+1}\}}}_{L^p\left(L^{p\frac{n}{n-2}}\right)(k+1)}&\lesssim 2^{\frac{3k}{p}}W_k^{\frac 1 p}\\
	\norm{v\chi_{\{v>\gamma_{k+1}\}}}_{L^\infty\left(L^2\right)(k+1)}&\lesssim \alpha^{\frac 1 2} 2^{\frac{3k}{2}}W_k^{\frac 1 2}\\
	\norm{\chi_{\{v>\gamma_{k+1}\}}}_{L^p\left(L^p\right)(k+1)}\leq\ \frac{\norm{v\chi_{\{v>\gamma_{k+1}\}}}_{L^p\left(L^p\right)(k+1)}}{\gamma_{k+1}}&\lesssim 2^{\frac{3k}{p}}\frac{W_k^{\frac 1 p}}{\gamma_\infty}\\
	\norm{\chi_{\{v>\gamma_{k+1}\}}}_{L^\infty\left(L^2\right)(k+1)}\leq\ \frac{\norm{v\chi_{\{v>\gamma_{k+1}\}}}_{L^\infty\left(L^2\right)(k+1)}}{\gamma_{k+1}}&\lesssim 2^{\frac{3k}{2}}\frac{\alpha^{\frac 1 2} W_k^{\frac 1 2}}{\gamma_\infty}\\
	\norm{\chi_{\{v>\gamma_{k+1}\}}}_{L^\infty\left(L^2\right)(k+1)}&\leq 1
	\end{aligned}
\end{equation}
As in the elliptic case we want to apply corollary \ref{cor} on $W_k$. To get to the point where this is possible we use at first H\"older's inequality and then use the interpolation of Bochner-Lebesgue-spaces (cf lemma \ref{interpol} in the appendix) in both factors between the spaces where we have information about the norms.\\

We start by estimating $Y$. For simplicity we drop the $2^k$-factors for now.
\begin{align*}
	Y_{k+1}^{\frac 1p}=&\norm{v\chi_{\{v>\gamma_{k+1}\}}}_{L^p(L^p)(k+1)}\\
	\leq& \norm{v\chi_{\{v>\gamma_{k+1}\}}}_{L^r(L^s)(k+1)}\norm{\chi_{\{v>\gamma_{k+1}\}}}_{L^{r'}(L^{s'})(k+1)}\nonumber\\
	\leq &\norm{v\chi_{\{v>\gamma_{k+1}\}}}_{L^p\left(L^{p \frac{n}{n-2}}\right)(k+1)}^\theta \norm{v\chi_{\{v>\gamma_{k+1}\}}}_{L^\infty(L^2)(k+1)}^{1-\theta}\nonumber\\
	&\norm{\chi_{\{v>\gamma_{k+1}\}}}_{L^p\left(L^{p \frac{n}{n-2}}\right)(k+1)}^{\alpha_1}\norm{\chi_{\{v>\gamma_{k+1}\}}}_{L^\infty(L^2)(k+1)}^{\alpha_2}\norm{\chi_{\{v>\gamma_{k+1}\}}}_{L^\infty(L^\infty)(k+1)}^{\alpha_3}\nonumber\\
	\lesssim& \frac{W_k^{\frac{\theta}{p}+\frac{1-\theta}{2}+\frac{\alpha_1}{p}+\frac{\alpha_2}{2}}\alpha^{\frac{1-\theta+\alpha_2}{2}}}{\gamma_\infty^{\alpha_1+\alpha_2}}\nonumber
\end{align*}
This can be rearranged to
	\begin{equation}
	Y_{k+1}\lesssim W_k \left(\frac{W_k \alpha^{\frac p 2 \frac{1-\theta+\alpha_2}{p(\frac{\theta}{p}+\frac{1-\theta}{2}+\frac{\alpha_1}{p}+\frac{\alpha_2}{2})-1}}}{\gamma_\infty^\frac{p(\alpha_1+\alpha_2)}{p(\frac{\theta}{p}+\frac{1-\theta}{2}+\frac{\alpha_1}{p}+\frac{\alpha_2}{2})-1}}\right)^{p(\frac{\theta}{p}+\frac{1-\theta}{2}+\frac{\alpha_1}{p}+\frac{\alpha_2}{2})-1}\label{YAbsch}
\end{equation}
To fix the parameters we get the equations
\begin{equation}\label{YBed}
	\begin{aligned}
		\frac 1 p &=\frac{1}{r}+\frac{1}{r'}=\frac{1}{s}+\frac{1}{s'}\\
		\frac{1}{r}&=\frac{\theta}{p}\\
		\frac{1}{s}&=\frac{\theta}{p\frac{n}{n-2}}+\frac{1-\theta}{2}\\
		\frac{1}{r'}&=\frac{\alpha_1}{p}\\
		\frac{1}{s'}&=\frac{\alpha_1}{p\frac{n}{n-2}}+\frac{\alpha_2}{2}\\
		1&=\alpha_1+\alpha_2+\alpha_3\\
	\end{aligned}
\end{equation}
From this we get

\begin{align}
	\alpha_1&=1-\theta\nonumber\\
	\alpha_2&=\frac{n p(\theta-1)+4}{n p}\label{Yalpha} \\
	\alpha_3&=\frac{np-4}{pn}\nonumber
\end{align} 
and we are free to choose $\theta\in (0,1)$ as long as we ensure that the $\alpha_i$ are non-negative. For $\alpha_1$ this is always the case. To get $\alpha_2\geq0$ we just have to choose $\theta$ large enough to have $\frac{np-4}{np}<\theta$. As $\alpha_3$ is not dependent on $\theta$, we have to deal with the restriction $np\geq4$ in another way. This will be done later. For now we just note that because of $n\geq2$, we do not have problems for $p\geq2$. We put \ref{Yalpha} in \ref{YAbsch} and get:
\begin{equation}\label{YAbschfinal}
	Y_{k+1}\lesssim W_k \left(\frac{W_k\alpha}{\gamma_\infty^2}\right)^\frac 2n
\end{equation}

We will now do the same for $Z$:
\begin{align*}
  Z_{k+1}^{\frac 1 2}=&\norm{v\chi_{\{v>\gamma_{k+1}\}}}_{L^2(L^2)(k+1)}\leq \norm{v\chi_{\{v>\gamma_{k+1}\}}}_{L^r(L^s)(k+1)}\norm{\chi_{\{v>\gamma_{k+1}\}}}_{L^{r'}(L^{s'})(k+1)}\\
	\leq &\norm{v\chi_{\{v>\gamma_{k+1}\}}}_{L^p(L^{p \frac{n}{n-2}})(k+1)}^\theta \norm{v\chi_{\{v>\gamma_{k+1}\}}}_{L^\infty(L^2)(k+1)}^{1-\theta}\\
	&\norm{\chi_{\{v>\gamma_{k+1}\}}}_{L^p(L^{p \frac{n}{n-2}})(k+1)}^{\alpha_1}\norm{\chi_{\{v>\gamma_{k+1}\}}}_{L^\infty(L^2)(k+1)}^{\alpha_2}\norm{\chi_{\{v>\gamma_{k+1}\}}}_{L^\infty(L^\infty)(k+1)}^{\alpha_3}\\
    \leq& \frac{W_k^{\frac{\theta}{p}+\frac{1-\theta}{2}+\frac{\alpha_1}{p}+\frac{\alpha_2}{2}}\alpha^{\frac{1-\theta+\alpha_2}{2}}}{\gamma_\infty^{\alpha_1+\alpha_2}}
\end{align*}
This can be rearranged to
\begin{equation}\label{ZAbsch}
	Z_{k+1}\lesssim W_k \left(\frac{W_k \alpha^{\frac{1-\Theta+\alpha_2}{2\left(\frac{\theta}{p}+\frac{1-\theta}{2}+\frac{\alpha_1}{p}+\frac{\alpha_2}{2}\right)-1}}}{\gamma_\infty^{\frac{2(\alpha_1+\alpha_2)}{{2\left(\frac{\theta}{p}+\frac{1-\theta}{2}+\frac{\alpha_1}{p}+\frac{\alpha_2}{2}\right)-1}}}}\right)^{2\left(\frac{\theta}{p}+\frac{1-\theta}{2}+\frac{\alpha_1}{p}+\frac{\alpha_2}{2}\right)-1}
\end{equation}
We can substitute $p$ by $2$ in the first equation of \ref{YBed} and get
\begin{equation}\label{ZAbschalpha}
	\begin{aligned}
		\alpha_1&=\frac 1 2 p-\theta\\
		\alpha_2&=\frac{n (\theta-1)+2}{n}\\
		\alpha_3&=\frac{n(4-p)-4}{2n}
	\end{aligned}
\end{equation}
One more time we are allowed to choose $\Theta$ freely between $0$ and $1$ if we ensure that the $\alpha_i$ are non-negative. For this to be possible for $\alpha_1$ and $\alpha_2$ we need a $\Theta$ with $\frac 1 2 p\geq\Theta\geq 1-\frac 2 n$. This is only possible for $p\geq 2-\frac 4 n$. $\alpha_3$ is independent of $\Theta$ and we need $n(4-p)-4\geq 0$ which means $p\leq 2(2-\frac 2 n)$.\\
In this case we put \ref{ZAbschalpha} in \ref{ZAbsch} and get using $\nu_2:=\frac n 2 (p-2) +4$:
\begin{equation}\label{ZAbschfinal}
 Z_{k+1}\lesssim W_k\left(\frac{W_k\alpha}{\gamma_\infty^{\frac{\nu_2}{2}}}\right)^\frac 2 n 
\end{equation}

To rule out most of the restrictions on $p$ we first note that for $n\geq 2$ the requirement $p\leq 2(2-\frac 2 n)$ can only be a problem for $p\geq 2$. We recall that we did not have problems in this case with our estimate of $Y$. So we set $\frac 1 2 =\frac 1 p+\frac 1 q$ and use H\"older, $\chi_{\{v>\gamma_{k+1}\}}(x)\in\{0,1\}$, the weak type estimate \ref{weaktype} and \ref{YAbschfinal}:

\begin{align*}
		Z_{k+1}=&\norm{v\chi_{\{v>\gamma_{k+1}\}}}_{L^2\left(L^2\right)(k+1)}^2\leq\norm{v\chi_{\{v>\gamma_{k+1}\}}}_{L^p\left(L^p\right)(k+1)}^2\norm{\chi_{\{v>\gamma_{k+1}\}}}_{L^q\left(L^q\right)(k+1)}^2\\
		=&\norm{v\chi_{\{v>\gamma_{k+1}\}}}_{L^p\left(L^p\right)(k+1)}^2\norm{\chi_{\{v>\gamma_{k+1}\}}}_{L^p\left(L^p\right)(k+1)}^\frac {2p} {q}\lesssim\frac{\norm{v\chi_{\{v>\gamma_{k+1}\}}}_{L^p\left(L^p\right)(k+1)}^p}{\gamma_\infty^{\frac{2p}{q}}}\\
		=&\frac{Y_{k+1}}{\gamma_\infty^{p-2}}\leq W_k\left(\frac{W_k\alpha}{\gamma^\frac{\nu_2}{2}}\right)^\frac 2n
\end{align*}
This shows that \ref{ZAbschfinal} is true for all $p\geq 2-\frac 4 n$.\\
In an analogous way we are now also able to get rid of the restriction $np\geq4$ in the estimate of $Y$ as we see that this is only a problem for $p\leq 2$. We set $\frac 1 p =\frac 1 2+\frac 1 q$ and use H\"older's inequality, $\chi_{\{v>\gamma_{k+1}\}}(x)\in\{0,1\}$, the weak type estimate \ref{weaktype} and \ref{ZAbschfinal} to get
\begin{align*}
		Y_{k+1}=&\norm{v\chi_{\{v>\gamma_{k+1}\}}}_{L^p\left(L^p\right)(k+1)}^p\leq\norm{v\chi_{\{v>\gamma_{k+1}\}}}_{L^2\left(L^2\right)(k+1)}^p\norm{\chi_{\{v>\gamma_{k+1}\}}}_{L^q\left(L^q\right)(k+1)}^p\\
		=&\norm{v\chi_{\{v>\gamma_{k+1}\}}}_{L^2\left(L^2\right)(k+1)}^p\norm{\chi_{\{v>\gamma_{k+1}\}}}_{L^2\left(L^2\right)(k+1)}^\frac {2p} {q}\lesssim\frac{\norm{v\chi_{\{v>\gamma_{k+1}\}}}_{L^2\left(L^2\right)(k+1)}^2}{\gamma_\infty^{\frac{2p}{q}}}\\
		=&\frac{Y_{k+1}}{\gamma_\infty^{2-p}}\leq W_k\left(\frac{W_k\alpha}{\gamma^2}\right)^\frac 2n
\end{align*}
This means \ref{YAbschfinal} is valid for all $p>1$. If we now add \ref{YAbschfinal} and $\frac{1}{\alpha}$ times \ref{ZAbschfinal} we get the estimate for $W$:

\begin{equation}
	W_{k+1}\lesssim W_k\left(\min\left\{\frac{W_k\alpha}{\gamma_\infty^2},\frac{W_k\alpha^{\frac{2-n}{n}}}{\gamma_\infty^{\frac{\nu_2}{2}}}\right\}\right)^{\frac{2}{n}}
\end{equation}
We see that this is independent of $\Theta$ and we still have the assumption $p>2-\frac 4 n$. Assuming this a priori leads to an easier proof of those estimates (and therefore estimates on $v$ via corollary \ref{cor}).
\begin{theorem}
	Let $p>2-\frac 4 n$ and $\textbf{u}\in L^p_{loc}(J,W^{1,p}_{loc}(\Omega,\mathbb{R}^m))\cap C_{\text{loc}}(J,L^2_{loc}(\Omega,\mathbb{R}^m))$ with $v:=|\nabla \textbf{u}|\in L^2_{\text{loc}}(J\times\Omega)$ be a local weak solution to the parabolic $p$-Laplacian equation $\partial_t \textbf{u}-\Delta_p \textbf{u}=0$ on a cylindrical Domain $J\times\Omega\subset \mathbb{R}^{1+n}$. Denote $\nu_2=n(p-2)+4$. For a cylinder $Q=I\times B$ with $2Q\Subset J\times\Omega$ and $R_t=\alpha R_x^2$ as before we have
	$$\min\left\{\frac {v^{\frac {\nu_2} 2}}{\alpha^{\frac{2-n}{n}}},\frac{v^2}{\alpha}\right\}\leq \dashint_{2Q}\frac{v^2}{\alpha}+v^p $$
\end{theorem}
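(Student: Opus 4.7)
The plan is to apply the geometric convergence corollary \ref{cor} to the sequence $W_k$, exploiting the recursion
\begin{equation*}
W_{k+1} \lesssim b^k\, W_k \cdot \min\left\{\tfrac{W_k\, \alpha}{\gamma_\infty^{2}},\ \tfrac{W_k\, \alpha^{(2-n)/n}}{\gamma_\infty^{\nu_2/2}}\right\}^{2/n}
\end{equation*}
that has just been derived, where $b>1$ is a constant depending only on $n$ and $p$. Recall that $W_k = Y_k + \alpha^{-1}Z_k$ is built from level sets $\{v>\gamma_k\}$ on nested cylinders $Q_k$ shrinking from $2Q$ down to $Q$, with $\gamma_k \nearrow \gamma_\infty$ to be fixed.

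First I would split the argument according to the two branches of the minimum. In either branch the recursion has the form $W_{k+1} \lesssim b^k W_k (W_k/\gamma)^{2/n}$, with $\gamma = \gamma_\infty^2/\alpha$ in the first case and $\gamma = \gamma_\infty^{\nu_2/2}/\alpha^{(2-n)/n}$ in the second. Corollary \ref{cor} with exponent $2/n$ then forces $W_k\to 0$ as soon as $\gamma \gtrsim W_0$, which, solved for $\gamma_\infty$, gives the two admissible choices
\begin{equation*}
\gamma_\infty^{\,2} \sim \alpha\, W_0 \qquad \text{or} \qquad \gamma_\infty^{\,\nu_2/2} \sim \alpha^{(2-n)/n}\, W_0.
\end{equation*}

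Next I would pass to the limit $k\to\infty$. Because $W_k$ controls $\int_{Q_{k+1}}\phi(v)\chi_{\{v>\gamma_{k+1}\}}$ with the inner cylinder $Q$ sitting inside every $Q_{k+1}$ and $\gamma_{k+1}\leq \gamma_\infty$, the vanishing of $W_k$ together with monotone convergence forces $v \leq \gamma_\infty$ a.e.\ on $Q$. Unpacking $W_0 \sim \dashint_{2Q}\!\bigl(v^p + v^2/\alpha\bigr)$ (which uses $\phi(t)=t^p$ and the choice of $\zeta_0$ with $\zeta_0\equiv 1$ on $2Q$), each of the two choices of $\gamma_\infty$ yields one of the pointwise bounds
\begin{equation*}
\frac{(\sup_Q v)^{2}}{\alpha} \lesssim \dashint_{2Q}\!\left(\frac{v^{2}}{\alpha}+v^{p}\right), \qquad \frac{(\sup_Q v)^{\nu_2/2}}{\alpha^{(2-n)/n}} \lesssim \dashint_{2Q}\!\left(\frac{v^{2}}{\alpha}+v^{p}\right),
\end{equation*}
and taking the smaller of the two left-hand sides gives the claim.

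The main obstacle I expect is purely bookkeeping rather than conceptual: one has to verify that the constants absorbed into the $b^k$-factor along the interpolation chains in lemma \ref{energy2} and the switch between branches depend only on $n$ and $p$, so that the quantitative smallness threshold $W_0 \lesssim C^{-n/2}b^{-n^2/4}$ from corollary \ref{cor} rescales into the clean inequality stated. No new constraint on $p$ appears at this final step, since the restriction $p>2-4/n$ has already been consumed to ensure non-negativity of the interpolation exponents $\alpha_1,\alpha_2,\alpha_3$ above.
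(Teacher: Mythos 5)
Your proposal is correct and follows essentially the same route as the paper: iterate on $W_k$ using the derived recursion, invoke Corollary \ref{cor} to force $W_k\to 0$, choose $\gamma_\infty$ so that the threshold is proportional to $W_0$, and conclude $v\leq\gamma_\infty$ a.e.\ on the inner cylinder. The only immaterial differences are that you apply the corollary once per branch of the minimum rather than once with the minimum itself as the effective threshold, and that the paper's actual proof re-derives the recursion directly (using the monotonicity of $t^{p-2+\frac{4}{n}}$ in the $Z$-estimate) rather than recycling the interpolation-exponent computation from the pre-theorem discussion, but both routes land on the same inequality under the same hypothesis $p>2-\frac{4}{n}$.
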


\begin{proof}
	We use the definitions from lemma \ref{energy2} and get using equations \ref{pinfo} and \ref{weaktype}:
	\begin{align*}
		Y_{k+1}=&\norm{v\chi_{\{v>\gamma_{k+1}\}}}_{L^p\left(L^p\right)(k+1)}^p\\
		\leq&\norm{v\chi_{\{v>\gamma_{k+1}\}}}_{L^p\left(L^{p\frac{n}{n-2}}\right)(k+1)}^p\norm{\chi_{\{v>\gamma_{k+1}\}}}_{L^\infty\left(L^{\frac{pn}{2}}\right)(k+1)}^p\\
		=&\norm{v\chi_{\{v>\gamma_{k+1}\}}}_{L^p\left(L^{p\frac{n}{n-2}}\right)(k+1)}^p\norm{\chi_{\{v>\gamma_{k+1}\}}}_{L^\infty\left(L^2\right)(k+1)}^{\frac 4 n}\\
		=&\norm{v\chi_{v<\gamma_{k+1}}}_{L^p\left(L^p\right)(k+1)}^p\norm{\chi_{v<\gamma_{k+1}}}_{L^\infty\left(L^2\right)(k+1)}^{\frac{4}{n}}\\
		\lesssim& 2^{3k}W_k \frac {2^{3k\frac{2}{n}} W_k^{\frac 2 n}\alpha^{\frac{2}{n}}}{\gamma_\infty^\frac 4 n}=2^{3k\left(1+\frac 2 n\right)}W_k\left(\frac{W_k\alpha}{\gamma^2}\right)^{\frac 2 n}
	\end{align*}

	To estimate $Z$ note that for $p>2-\frac 4 n$ the function $t^{p-2+\frac 4 n}$ is increasing.
	\begin{align*}
		Z_{k+1}=&\norm{v\chi_{\{v>\gamma_{k+1}\}}}_{L^2\left(L^2\right)(k+1)}^2=\norm{v^2\chi_{\{v>\gamma_{k+1}\}}}_{L^1\left(L^1\right)(k+1)}\\
		=&\left\vert\left\vert\frac{v^{p-2+\frac 4 n}}{v^{p-2+\frac 4 n}}v^2\chi_{\{v>\gamma_{k+1}\}}\right\vert\right\vert_{L^1\left(L^1\right)(k+1)}\\
		\leq&\frac{1}{\gamma_{k+1}^{p-2+\frac 4 n}}\norm{{v^{p+\frac 4 n}}\chi_{\{v>\gamma_{k+1}\}}}_{L^1\left(L^1\right)(k+1)}\\
		\lesssim&\frac 1 {\gamma_\infty^{\frac{2\nu_2}{n}}}\norm{v^p\chi_{\{v>\gamma_{k+1}\}}}_{L^1\left(L^{\frac n {n-2}}\right)(k+1)}\norm{v^{\frac 4 n}\chi_{\{v>\gamma_{k+1}\}}}_{L^\infty\left(L^{\frac 2 n}\right)(k+1)}\\
		=&\frac 1 {\gamma^{\frac{\nu_2}{n}}}\norm{v\chi_{\{v>\gamma_{k+1}\}}}^p_{L^p\left(L^{p\frac n {n-2}}\right)(k+1)}\norm{v\chi_{\{v>\gamma_{k+1}\}}}_{L^\infty\left(L^{2}\right)(k+1)}^{\frac 4 n}\\
		\lesssim&2^{3k\left(1+\frac 2 n\right)}W_k\left(\frac{\alpha W_k}{\gamma^{\frac{\nu_2} 2}}\right)^{\frac 2 n}
	\end{align*}
	This means we have

	\begin{align*}
		W_{k+1}=&Y_{k+1}+\frac 1 \alpha Z_{k+1}\\
		\lesssim &2^{3k\left(1+\frac 2 n\right)}W_k\left(\alpha\frac{W_k}{\gamma_\infty^2}\right)^{\frac 2 n}+2^{3k\left(1+\frac 2 n\right)}\frac 1 \alpha W_k\left(\frac{W_k\alpha}{\gamma_\infty^{\frac{\nu_2} 2}}\right)^{\frac 2 n}\\
		\lesssim &2^{3k\left(1+\frac 2 n\right)}W_k \max\left\{\left(\frac{W_k\alpha}{\gamma_\infty^2}\right)^{\frac 2 n},\left(\frac{W_k\alpha^{\frac {2-n}{2}}}{\gamma_\infty^{\frac{\nu_2} 2}}\right)^{\frac 2 n}\right\}\\
		=&2^{3k\left(1+\frac 2 n\right)}W_k\left(\frac{W_k}{\min\left\{\frac{\gamma_\infty^{\frac{\nu_2} 2}}{\alpha^{\frac{2-n}{2}}},\frac{\gamma_\infty^2}{\alpha}\right\}}\right)^{\frac 2 n}
	\end{align*}

Like in the elliptic case we conclude with corollary \ref{cor} that $W_k\rightarrow 0$ for $W_0\sim \min\left\{\gamma^{\frac{\nu_2} 2},\gamma^2\right\}$ and we therefore get on $Q$: 
$$  \min\left\{\frac{v^{\frac{\nu_2} 2}}{\alpha^{\frac{2-n}{2}}},\frac{v^2}{\alpha}\right\}<\min\left\{\frac{\gamma_\infty^{\frac{\nu_2} 2}}{\alpha^{\frac{2-n}{2}}},\frac{\gamma_\infty^2}{\alpha}\right\}\sim W_0=\dashint v^p+\frac{v^2}{\alpha}$$
\end{proof}

We remark that we have $\frac{\nu_2}{2}<p$ for $p<2$ and $\frac{\nu_2}{2}>p$ for $p>2$.\\

To generalize the $p$-Laplacian case back to the $\phi$-Laplacian we have to ``translate'' the assumptions on $p$ to assumptions on an N-function $\phi$. As we do not have an easy relationship between $\norm{f}_\phi=\inf\left\{k>0:\,\int \frac {\phi} k\leq 1 \right\}$ and $\int{\phi(v)}$ we cannot use Bochner spaces like before. The proof we got at the end of the previous section is nonetheless easy to generalize. The final theorem of this thesis reads:
\begin{theorem}
	Let $\phi$ be an N-Function with $\Delta_2(\{\phi,\phi^\ast\})<\infty$ satisfying assumption \ref{mainassumption} where $\rho(t)^{\frac 2 n}:=\phi(t)t^{\frac 4 n -2}$ is an increasing function and let $\textbf{u}\in L^\phi_\loc(J\times\Omega)\cap C_{\text{loc}}(J,L^2_{loc}(\Omega,\mathbb{R}^m))$ with $v:=|\nabla \textbf{u}|\in L^\phi_{\text{loc}}(J\times\Omega)\cap L^2_{\text{loc}}(J,L^2_{\text{loc}}(\Omega))$ be a local weak solution to the parabolic $\phi$-Laplacian equation 
	$$\partial_t \textbf{u}-\Delta_\phi \textbf{u}=0$$ 
	on a cylindrical domain $J\times\Omega $. For a cylinder $Q=I\times B$ with $2Q\Subset J\times\Omega$ and $R_t=\alpha R_x^2$ we have
	$$\min\left\{\frac {\rho(v)}{\alpha^{\frac{2-n}{2}}},\frac{v^2}{\alpha}\right\}\lesssim \dashint_{2Q}\frac{v^2}{\alpha}+\phi(v) $$
\end{theorem}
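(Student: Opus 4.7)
The plan is to repeat, in the $\phi$-setting, the end-of-section argument of the $p$-Laplacian case, with $\phi(v)$ playing the role of $v^p$ and $\rho(v)$ playing the role of $v^{\nu_2/2}$. The assumption that $\rho^{2/n}$ is increasing is the precise analogue of the scalar inequality $p-2+4/n\ge 0$ that was used there. Keeping the same cut-off sequences $\zeta_k$ and level parameters $\gamma_k$ as in lemma \ref{energy2}, I would introduce
\[
Y_k := \|\phi(v)\chi_{\{v>\gamma_k\}}\|_{L^1(L^1)(k)}, \qquad Z_k := \|v^2\chi_{\{v>\gamma_k\}}\|_{L^1(L^1)(k)}, \qquad W_k := Y_k + \frac{Z_k}{\alpha},
\]
and aim for an iterative bound of the shape
\[
W_{k+1}\lesssim 2^{3k(1+2/n)}\,W_k\left(\frac{W_k}{\min\!\left\{\gamma_\infty^2/\alpha,\ \rho(\gamma_\infty)/\alpha^{(2-n)/2}\right\}}\right)^{2/n},
\]
to which corollary \ref{cor} can be applied.

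To estimate $Y_{k+1}$, I split by Hölder as
\[
Y_{k+1} \le \|\phi(v)\chi\|_{L^1(L^{n/(n-2)})(k+1)}\cdot\|\chi\|_{L^\infty(L^{n/2})(k+1)},
\]
control the first factor by \ref{instatabsch2}, and rewrite the second as $\|\chi\|_{L^\infty(L^2)(k+1)}^{4/n}$, which is estimated via the weak-type inequality $\chi_{\{v>\gamma_{k+1}\}}\le v/\gamma_{k+1}$ together with \ref{instatabsch1} and $\gamma_{k+1}\ge \gamma_\infty/2$. This should yield $Y_{k+1}\lesssim 2^{3k(1+2/n)}W_k(\alpha W_k/\gamma_\infty^2)^{2/n}$, which produces the first term in the minimum above.

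The heart of the argument is the bound on $Z_{k+1}$, and this is where the monotonicity of $\rho^{2/n}$ enters. On $\{v>\gamma_{k+1}\}$ the identity $v^2 = \phi(v)v^{4/n}/\rho(v)^{2/n}$ combined with monotonicity gives the pointwise bound
\[
v^2\chi_{\{v>\gamma_{k+1}\}}\le \frac{\phi(v)\,v^{4/n}}{\rho(\gamma_{k+1})^{2/n}}\,\chi_{\{v>\gamma_{k+1}\}}.
\]
A Hölder split of the same shape $L^1(L^{n/(n-2)})\cdot L^\infty(L^{n/2})$ and the same two energy bounds from lemma \ref{energy2} then yield $Z_{k+1}\lesssim 2^{3k(1+2/n)}W_k(\alpha W_k/\rho(\gamma_\infty))^{2/n}$; here I use that $\rho$ inherits the $\Delta_2$-condition from $\phi$ to replace $\rho(\gamma_{k+1})$ by $\rho(\gamma_\infty)$ up to constants. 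Dividing by $\alpha$, the factor $\alpha^{2/n-1}$ reorganises into $\alpha^{(2-n)/2}$ inside the bracket, giving the second term of the minimum.

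Adding the two bounds produces the iteration inequality stated above, after which corollary \ref{cor} forces $W_k\to 0$ provided $\min\{\gamma_\infty^2/\alpha,\ \rho(\gamma_\infty)/\alpha^{(2-n)/2}\}$ is a suitable constant multiple of $W_0$. Choosing $\gamma_\infty$ to achieve this, and noting that $W_0\le \dashint_{2Q}(\phi(v)+v^2/\alpha)$, delivers the claimed pointwise bound on $Q$. The main technical obstacle is precisely the $Z_{k+1}$ step: without the power-law structure one cannot raise $v$ to a fractional exponent on the level set in an ad hoc fashion, so the introduction of the auxiliary function $\rho$ together with its monotonicity is exactly what makes the substitution legitimate and selects the exponent $(2-n)/2$ appearing in the final inequality.
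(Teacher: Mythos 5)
Your proposal follows the paper's proof essentially step for step: same $Y_k$, $Z_k$, $W_k$ quantities, the same Hölder split into $L^1(L^{n/(n-2)})\cdot L^\infty(L^{n/2})$, the identical use of the monotonicity of $\rho^{2/n}$ to turn $v^2$ into $\phi(v)v^{4/n}/\rho(\gamma_{k+1})^{2/n}$ on the level set, and corollary~\ref{cor} applied to the resulting iteration. The only small addition you make — explicitly noting that $\rho$ inherits $\Delta_2$ from $\phi$ so that $\rho(\gamma_{k+1})\gtrsim\rho(\gamma_\infty)$ — is a justification the paper leaves implicit, and it is correct.
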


\begin{proof}
	We proceed as we did in the $p$-Laplacian case and use the definitions from lemma \ref{energy2}. For $Y$ we get:
	\begin{align*}
		Y_{k+1}=&\norm{\phi(v)\chi_{\{v>\gamma_{k+1}\}}}_{L^1\left(L^1\right)(k+1)}\\
		\leq&\norm{\phi(v)\chi_{\{v>\gamma_{k+1}\}}}_{L^1\left(L^{\frac{n}{n-2}}\right)(k+1)}\norm{\chi_{\{v>\gamma_{k+1}\}}}_{L^\infty\left(L^{\frac{n}{2}}\right)(k+1)}\\
		=&\norm{\phi(v)\chi_{\{v>\gamma_{k+1}\}}}_{L^1\left(L^{\frac{n}{n-2}}\right)(k+1)}\norm{\chi_{\{v>\gamma_{k+1}\}}}_{L^\infty\left(L^2\right)(k+1)}^{\frac 4 n}\\
		\lesssim& 2^{3k\left(1+\frac 2 n\right)}W_k\left(\frac{W_k\alpha}{\gamma_\infty^2}\right)^{\frac 2 n}
	\end{align*}
	And now for $Z$:
	\begin{align*}
		Z_{k+1}=&\norm{v^2\chi_{\{v>\gamma_{k+1}\}}}_{L^1\left(L^1\right)(k+1)}=\norm{\frac{\rho(v)^{\frac 2 n}}{\rho(v)^{\frac 2 n}}v^2\chi_{\{v>\gamma_{k+1}\}}}_{L^1\left(L^1\right)(k+1)}\\
		\leq&\frac{1}{\rho(\gamma_{k+1})^{\frac 2 n}}\norm{\phi(v){v^{\frac 4 n}}\chi_{\{v>\gamma_{k+1}\}}}_{L^1\left(L^1\right)(k+1)}\\
		\lesssim&\frac 1 {\rho(\gamma_{\infty})^{\frac 2 n}}\norm{\phi(v)\chi_{\{v>\gamma_{k+1}\}}}_{L^1\left(L^{\frac n {n-2}}\right)(k+1)}\norm{v^{\frac 4 n}\chi_{\{v>\gamma_{k+1}\}}}_{L^\infty\left(L^{\frac 2 n}\right)(k+1)}\\
		=&\frac 1 {\rho(\gamma_{\infty})^{\frac 2 n}}\norm{\phi(v)\chi_{\{v>\gamma_{k+1}\}}}_{L^1\left(L^{\frac n {n-2}}\right)(k+1)}\norm{v\chi_{v^2>\gamma_{k+1}}}_{L^\infty\left(L^{1}\right)(k+1)}^{\frac 2 n}\\
		\lesssim&2^{3k\left(1+\frac{2}{n}\right)} W_k\left(\frac{W_k\alpha}{\rho(\gamma_\infty)}\right)^{\frac 2 n}
	\end{align*}
	
	In total, we have
	\begin{align*}
		W_{k+1}=&Y_{k+1}+\frac 1 \alpha Z_{k+1}\\
		\lesssim &2^{3k\left(1+\frac{2}{n}\right)}W_k\left(\frac{W_k\alpha}{\gamma_\infty^2}\right)^{\frac 2 n}+\frac 1 \alpha 2^{3k\left(1+\frac{2}{n}\right)}W_k\left(\frac{W_k\alpha}{\rho(\gamma_\infty)}\right)^{\frac 2 n}\\
		\lesssim &2^{3k\left(1+\frac{2}{n}\right)}W_k \max\left\{\left(\frac{W_k\alpha}{\gamma_\infty^2}\right)^{\frac 2 n},\left(\frac{W_k\alpha^{\frac{2-n}{n}}}{\rho(\gamma_\infty)}\right)^{\frac 2 n}\right\}\\
		=&2^{3k\left(1+\frac{2}{n}\right)}W_k\left(\frac{W_k}{\min\left\{\frac{\rho\left(\gamma_\infty\right)}{\alpha^{\frac{2-n}{2}}},\frac{\gamma_\infty^2}{\alpha}\right\}}\right)^{\frac{2}{n}}
	\end{align*}
	and the theorem follows as before from corollary \ref{cor} as we have $W_k\rightarrow 0$ for $\min\left\{\frac{\rho\left(\gamma_\infty\right)}{\alpha^{\frac{2-n}{2}}},\frac{\gamma_\infty^2}{\alpha}\right\}\sim W_0$:
	$$  \min\left\{\frac{\rho(v)}{\alpha^{\frac{2-n}{2}}},\frac{v^2}{\alpha}\right\}<\min\left\{\frac{\rho\left(\gamma_\infty\right)}{\alpha^{\frac{2-n}{2}}},\frac{\gamma_\infty^2}{\alpha}\right\}\sim W_0=\dashint \phi(v)+\frac{v^2}{\alpha}$$
	\enlargethispage{1cm}
\end{proof}

\newpage
\section{Appendix}
\begin{lemma}\label{interpol}
	Let $(\Omega_1,\mathcal{A}_1, \mu_1)$ and $(\Omega_2,\mathcal{A}_2, \mu_2)$ be measure spaces and denote the corresponding Lebesgue-Bochner-spaces by $L^p(L^q):=L^p(\Omega_1,L^q(\Omega_2,\mathbb{R}^m))$.\\
	\begin{enumerate} 
	 \item Let $p$,$p_1$, $p_2$, $q$, $q_1$, $q_2$ be real numbers greater than $1$ or infinity with $\frac{1}{p}=\frac{1}{p_1}+\frac{1}{p_2}$ and $\frac{1}{q}=\frac{1}{q_1}+\frac{1}{q_2}$ ($\frac 1 \infty=0$) and let $f\in L^{p_1}\left(L^{q_1}\right)$ and  $g\in L^{p_2}\left(L^{q_2}\right)$.\\
	 Then we have $fg\in L^{p}\left(L^{q}\right)$ and $\norm{fg}_{L^{p}\left(L^{q}\right)}\leq\norm{f}_{L^{p_1}\left(L^{q_1}\right)}\norm{g}_{L^{p_2}\left(L^{q_2}\right)}$
	 \item Let $p_0$, $p_1$, $q_0$, $q_1$ be real numbers greater than $1$ or infinity and let $f\in L^{p_0}\left(L^{q_1}\right)\cap L^{p_2}\left(L^{q_2}\right)$
	 Then for $\Theta\in [0,1]$ with $\frac 1 p=\frac \Theta {p_1}+\frac{1-\Theta}{p_0}$ and $\frac 1 1=\frac \Theta {q_1}+\frac{1-\Theta}{q_0}$ we have $f\in L^{p}\left(L^{q}\right)$.
	\end{enumerate}

\end{lemma}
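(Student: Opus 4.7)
The statement splits into a mixed-norm Hölder inequality (a) and a logarithmically convex interpolation bound (b). Both are standard once the inner/outer structure of the $L^p(L^q)$ norm is peeled apart; I expect no serious obstacle, only bookkeeping with exponents. Throughout I write $\|f(x,\cdot)\|_{L^{q}}$ for the $\Omega_2$-norm of the section at $x\in\Omega_1$, so that $\|f\|_{L^p(L^q)}=\big\|\,\|f(x,\cdot)\|_{L^q}\,\big\|_{L^p(d\mu_1)}$.

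For part (a) my plan is to apply classical scalar Hölder twice. First, for $\mu_1$-a.e.\ $x\in\Omega_1$, the ordinary Hölder inequality on $(\Omega_2,\mu_2)$ with exponents $q_1,q_2$ (using $1/q=1/q_1+1/q_2$) gives
\begin{equation*}
\|f(x,\cdot)g(x,\cdot)\|_{L^q}\leq \|f(x,\cdot)\|_{L^{q_1}}\,\|g(x,\cdot)\|_{L^{q_2}}.
\end{equation*}
Then I raise to the $p$-th power, integrate over $\Omega_1$, and apply Hölder on $(\Omega_1,\mu_1)$ with exponents $p_1/p$ and $p_2/p$ (permissible because $1/p=1/p_1+1/p_2$ forces $p/p_1+p/p_2=1$) to obtain
\begin{equation*}
\|fg\|_{L^p(L^q)}^{p}\leq \big\|\,\|f(x,\cdot)\|_{L^{q_1}}^p\,\big\|_{L^{p_1/p}}\,\big\|\,\|g(x,\cdot)\|_{L^{q_2}}^p\,\big\|_{L^{p_2/p}}=\|f\|_{L^{p_1}(L^{q_1})}^p\,\|g\|_{L^{p_2}(L^{q_2})}^p,
\end{equation*}
which is the desired estimate. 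The endpoint cases where some exponent equals $\infty$ are handled by the usual convention $\|\cdot\|_\infty\cdot\|\cdot\|_1$ replacements in each Hölder step.

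For part (b) the plan is to reduce to part (a) by the standard factorisation trick. Write $|f|=|f|^{\Theta}\cdot|f|^{1-\Theta}$ and set
\begin{equation*}
a_1:=p_1/\Theta,\quad a_0:=p_0/(1-\Theta),\quad b_1:=q_1/\Theta,\quad b_0:=q_0/(1-\Theta),
\end{equation*}
so that by hypothesis $1/a_1+1/a_0=1/p$ and $1/b_1+1/b_0=1/q$. A direct computation gives
\begin{equation*}
\big\||f|^{\Theta}\big\|_{L^{a_1}(L^{b_1})}=\|f\|_{L^{p_1}(L^{q_1})}^{\Theta},\qquad \big\||f|^{1-\Theta}\big\|_{L^{a_0}(L^{b_0})}=\|f\|_{L^{p_0}(L^{q_0})}^{1-\Theta}.
\end{equation*}
Applying part (a) to the product $|f|^{\Theta}\cdot|f|^{1-\Theta}$ with these exponents then yields
\begin{equation*}
\|f\|_{L^p(L^q)}\leq \|f\|_{L^{p_1}(L^{q_1})}^{\Theta}\,\|f\|_{L^{p_0}(L^{q_0})}^{1-\Theta},
\end{equation*}
which is finite by assumption, so $f\in L^p(L^q)$. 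The endpoints $\Theta=0,1$ are trivial. The only point where I would slow down is checking that the factorisation $|f|=|f|^\Theta|f|^{1-\Theta}$ is well defined on the set $\{f=0\}$ (it is, by the convention $0^{\Theta}=0$) and that the intersection hypothesis $f\in L^{p_0}(L^{q_0})\cap L^{p_1}(L^{q_1})$ guarantees measurability of the factors; both are immediate, so no real obstacle remains.
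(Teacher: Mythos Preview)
Your proof is correct and follows essentially the same approach as the paper: for (a) you apply H\"older first on the inner variable and then on the outer variable, and for (b) you factor $|f|=|f|^{\Theta}|f|^{1-\Theta}$ and invoke (a), exactly as in the paper's argument.
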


\begin{proof}
	\begin{enumerate}
	 \item 
	 \begin{align*}
		\norm{fg}_{L^{p}\left(L^{q}\right)}&=\norm{\norm{fg}_{L^p}}_{L^q}\leq\norm{\norm{f}_{L^{p_1}}\norm{g}_{L^{p_2}}}_{L^q}\\
		&\leq\norm{\norm{f}_{L^{p_1}}}_{L^{q_1}}\norm{\norm{g}_{L^{p_2}}}_{L^{q_2}}=\norm{f}_{L^{p_1}\left(L^{q_1}\right)}\norm{g}_{L^{p_2}\left(L^{q_2}\right)}
	 \end{align*}
	 \item We use the H\"older-type estimate from above
	 \begin{align*}
		\norm{f}_{L^{p}\left(L^{q}\right)}&=\norm{f^{\Theta}f^{1-\Theta}}_{L^{p}\left(L^{q}\right)}\leq\norm{f^{\Theta}}_{L^{\frac{p_1}{\Theta}}\left(L^{\frac{q_1}{\Theta}}\right)}\norm{f^{1-\Theta}}_{L^{\frac{p_0}{1-\Theta}}\left(L^{\frac{q_0}{1-\Theta}}\right)}\\
		&=\norm{f}^{\Theta}_{L^{p_1}\left(L^{q_1}\right)}\norm{f}^{1-\Theta}_{L^{p_0}\left(L^{q_0}\right)}
	 \end{align*}

	\end{enumerate}

\end{proof}

\begin{lemma}\label{phiconvergence}
	Let $\phi$ be an N-Function with $\Delta_2(\phi)<\infty$. Then the following are equivalent:
	\begin{enumerate}
		\item $||f_n-f||_\phi\rightarrow 0$
		\item $\int \phi(|f_n-f|)\rightarrow 0$
	\end{enumerate}
	and those imply
	\begin{equation}\label{phiconvergence3}
		\left\vert\int \phi(|f_n|)-\int \phi(|f|)\right\vert\rightarrow 0
	\end{equation}
\end{lemma}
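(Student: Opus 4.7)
The plan is to establish the equivalence (a) $\Leftrightarrow$ (b) by sandwiching the Luxemburg norm with the modular $\int\phi(|f_n-f|)$, and then to deduce \eqref{phiconvergence3} from (b) via a domination argument combined with a subsequence extraction.

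For (a) $\Rightarrow$ (b) I would use only convexity of $\phi$ and $\phi(0)=0$, which gives $\phi(\lambda t)\le \lambda\,\phi(t)$ for $\lambda\in[0,1]$. Once $n$ is large enough that $\norm{f_n-f}_\phi\le 1$, the definition of the Luxemburg norm yields $\int \phi(|f_n-f|/\norm{f_n-f}_\phi)\le 1$, and then
\[
\int \phi(|f_n-f|)\;\le\;\norm{f_n-f}_\phi\int\phi\bigl(|f_n-f|/\norm{f_n-f}_\phi\bigr)\;\le\;\norm{f_n-f}_\phi\;\longrightarrow 0.
\]

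For (b) $\Rightarrow$ (a) the $\Delta_2$-condition is essential: iterating $\phi(2t)\le\Delta_2(\phi)\phi(t)$ a logarithmic number of times yields, for every $\epsilon\in(0,1)$, a constant $C_\epsilon$ with $\phi(t/\epsilon)\le C_\epsilon\,\phi(t)$ for all $t\ge 0$. Given (b), choose $n$ large enough that $C_\epsilon\int\phi(|f_n-f|)\le 1$; then $\int\phi(|f_n-f|/\epsilon)\le 1$, so the defining inequality of the Luxemburg norm is satisfied at level $\epsilon$ and $\norm{f_n-f}_\phi\le\epsilon$.

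To deduce \eqref{phiconvergence3} I would first use convexity and $\Delta_2$ to bound
\[
\phi(|f_n|)\;\le\;\tfrac12\phi(2|f|)+\tfrac12\phi(2|f_n-f|)\;\le\;c\,\phi(|f|)+c\,\phi(|f_n-f|),
\]
and symmetrically $\phi(|f|)\le c\,\phi(|f_n|)+c\,\phi(|f_n-f|)$, which in particular shows $\int\phi(|f|)<\infty$. Since (b) means $\phi(|f_n-f|)\to 0$ in $L^1$, a subsequence $f_{n_k}$ converges to $f$ pointwise almost everywhere, so $\phi(|f_{n_k}|)\to\phi(|f|)$ a.e.\ by continuity of $\phi$. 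The above bound supplies the $L^1$-convergent dominant $c\,\phi(|f|)+c\,\phi(|f_{n_k}-f|)\to c\,\phi(|f|)$ in $L^1$, and the generalized Lebesgue dominated convergence theorem (where the dominant is allowed to vary with $k$ as long as its $L^1$-norm converges to that of the a.e.\ limit) gives $\int\phi(|f_{n_k}|)\to\int\phi(|f|)$ along this subsequence. A standard subsequence-of-subsequence argument then upgrades this to convergence of the full sequence, yielding \eqref{phiconvergence3}.

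The main obstacle is the quantitative tracking of the $\Delta_2$-constant in (b) $\Rightarrow$ (a) and the correct invocation of the variable-dominant version of Lebesgue's theorem for the last step; without $\Delta_2$ the equivalence (a) $\Leftrightarrow$ (b) would fail outright, so the hypothesis is used decisively in both the second and the third step.
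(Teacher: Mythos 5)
Your proof of the equivalence (a) $\Leftrightarrow$ (b) follows the same route as the paper: (a) $\Rightarrow$ (b) uses convexity and $\phi(0)=0$ to extract the factor $\|f_n-f\|_\phi$, and (b) $\Rightarrow$ (a) uses the $\Delta_2$-condition to control $\phi(t/\epsilon)$. Where you genuinely diverge is in the last implication \eqref{phiconvergence3}. The paper establishes the modular quasi-subadditivity $\int\phi(|f+g|)\lesssim\int\phi(|f|)+\int\phi(|g|)$ and asserts that this suffices; taken literally this only gives the two-sided bounds $\int\phi(|f_n|)\le c\int\phi(|f|)+o(1)$ and $\int\phi(|f|)\le c\int\phi(|f_n|)+o(1)$, which do not identify the limit when $c>1$. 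One can rescue that strategy by upgrading, via convexity and a quantitative $\Delta_2$-iteration, to a $(1+\epsilon)$-subadditivity $\phi(s+t)\le(1+\epsilon)\phi(s)+C_\epsilon\phi(t)$ and then letting $\epsilon\to 0$, but the paper does not spell this out. Your approach avoids the issue entirely: from (b) you extract a subsequence along which $f_{n_k}\to f$ a.e., use the variable-dominant version of Lebesgue's theorem with dominant $c\,\phi(|f|)+c\,\phi(|f_{n_k}-f|)\to c\,\phi(|f|)$ in $L^1$, and then upgrade to the full sequence by the subsequence-of-subsequences argument. This is a complete and correct proof; the tradeoff relative to a sharpened subadditivity is that yours relies on a qualitative compactness step rather than a purely quantitative estimate, but it settles the limit cleanly without needing the $(1+\epsilon)$-refinement.
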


\begin{proof} (\cite{rao1991theory} Theorem 3.14.12)
	We show the theorem for $f=0$. For the general case we can just use $g_n=f_n-f$.\\
	(a)$\Rightarrow$(b): As we have $\norm{f_n}_\phi\rightarrow 0$ we have $\norm{f_n}_\phi\leq 1$ for $n$ large enough. This leads to
	\begin{equation*}
		\int \phi(|f_n|)=\int \phi\left(\frac{\norm{f_n}_\phi f_n}{\norm{f_n}_\phi}\right)\leq \norm{f_n}_\phi\int \phi\left(\frac{ f_n}{\norm{f_n}_\phi}\right)\leq \norm{f_n}_\phi\rightarrow 0
	\end{equation*}
	(b)$\Rightarrow$(a): Take $\epsilon>0$. Because of the $\Delta_2$-regularity of $\phi$ we have 
	$$\int \phi\left(\frac{|f_n|}{\epsilon}\right)\leq c_\epsilon \int \phi(|f_n|) $$
	As $\int \phi(|f_n|)\rightarrow 0$ there is an $N$ such that $\int \phi(|f_n|)\leq \frac{1}{c_\epsilon}$. But this means $\norm{f_n}_\phi\leq\epsilon$.\\
	For the last assertion it suffices to show that $\int \phi(|f+g|)\lesssim\int(\phi(|f|)+\phi(|g|))$. With the convexity and monotony of $\phi$ and the $\Delta_2$-condition we get
	$$\phi(|f+g|)\leq\phi(|f|+|g|)\leq\frac 1 2\left(\phi(2|f|)+\phi(2|g|)\right)\leq\frac {\Delta_2(\phi)}{2}\left(\phi(|f|)+\phi(|g|)\right)  $$	
\end{proof}

\begin{lemma}
	Let $\phi$ be a $\Delta_2$-regular N-function and $\Omega$ a bounded domain. Then the space of $C^\infty$-functions on $\Omega$ is dense in the Orlicz space $K^\phi(\Omega)$.
\end{lemma}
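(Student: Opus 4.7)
The plan is to approximate any $f\in K^\phi(\Omega)$ by smooth functions via a two-step scheme: first truncate $f$ to obtain boundedness, then mollify. Throughout, I would convert between modular convergence $\int\phi(|f_n-f|)\to 0$ and Luxemburg-norm convergence $\|f_n-f\|_\phi\to 0$ using the previous lemma (Lemma \ref{phiconvergence}), which applies since $\Delta_2(\phi)<\infty$. Note also that $\Delta_2(\phi)<\infty$ implies $K^\phi(\Omega)=L^\phi(\Omega)$, so $\int_\Omega\phi(|f|)\dx<\infty$.

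\textbf{Step 1 (Truncation).} For $N\in\N$, set $f_N:=f\,\chi_{\{|f|\leq N\}}$. Then $|f-f_N|=|f|\chi_{\{|f|>N\}}\to 0$ pointwise a.e.\ as $N\to\infty$, and $\phi(|f-f_N|)\leq\phi(|f|)\in L^1(\Omega)$. Dominated convergence gives $\int_\Omega\phi(|f-f_N|)\dx\to 0$, hence $\|f-f_N\|_\phi\to 0$ by Lemma \ref{phiconvergence}.

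\textbf{Step 2 (Mollification).} Since $\Omega$ is bounded and $|f_N|\leq N$, extend $f_N$ by zero to $\R^n$ so that $f_N\in L^\infty(\R^n)\cap L^1(\R^n)$. Let $\xi_\epsilon$ be a standard mollifier and define $f_{N,\epsilon}:=f_N\ast\xi_\epsilon\in C^\infty(\R^n)$. Then $|f_{N,\epsilon}|\leq N$ uniformly in $\epsilon$, $f_{N,\epsilon}$ is supported in a fixed bounded neighbourhood $\Omega'\supset\Omega$, and $f_{N,\epsilon}\to f_N$ pointwise a.e.\ as $\epsilon\to 0$. Hence $\phi(|f_N-f_{N,\epsilon}|)\leq\phi(2N)\chi_{\Omega'}\in L^1(\R^n)$, and another application of dominated convergence yields $\int_\Omega\phi(|f_N-f_{N,\epsilon}|)\dx\to 0$, so $\|f_N-f_{N,\epsilon}\|_\phi\to 0$ by Lemma \ref{phiconvergence}.

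\textbf{Step 3 (Diagonalization).} For each $N$ choose $\epsilon_N$ so small that $\|f_N-f_{N,\epsilon_N}\|_\phi<\tfrac{1}{N}$. The triangle inequality then gives
\begin{equation*}
\|f-f_{N,\epsilon_N}\|_\phi\leq\|f-f_N\|_\phi+\|f_N-f_{N,\epsilon_N}\|_\phi\to 0,
\end{equation*}
and the restrictions $f_{N,\epsilon_N}\big|_\Omega$ are smooth functions on $\Omega$ (in fact smooth on $\R^n$) converging to $f$ in $K^\phi(\Omega)$.

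The argument is essentially standard and the only technical care is at the boundary of $\Omega$: because the truncation is bounded and $\Omega$ is bounded, extension by zero is harmless and the mollification remains controlled by $\phi(2N)$ on a fixed bounded set, so no boundary pathology arises. The role of $\Delta_2(\phi)<\infty$ is crucial twice—first to guarantee $K^\phi=L^\phi$ so that the modular is finite on $f$, and second through Lemma \ref{phiconvergence} to convert the modular convergence produced by Lebesgue's theorem into Luxemburg-norm convergence. Without $\Delta_2$ these two convergences differ and the argument would break.
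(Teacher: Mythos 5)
Your proof is correct, but it follows a genuinely different route from the paper's. The paper first approximates $f$ by simple functions (built from the definition of the Lebesgue integral applied to $\phi(|f|)$) and then approximates simple functions by smooth ones by invoking density of $C^\infty$ in $L^{\alpha_1}(\Omega)$ together with the polynomial sandwich $\phi(t)\lesssim (t^{\alpha_1}+t^{\alpha_2})\phi(1)$ for $\Delta_2$-regular N-functions, cited from Gustavsson--Peetre. Your argument instead truncates $f$ to obtain a bounded function, mollifies, and diagonalizes, with the two convergences controlled by dominated convergence and Lemma \ref{phiconvergence}. The trade-off: your route is more elementary and self-contained — it needs only the $\Delta_2$-condition (through modular $\Leftrightarrow$ norm convergence) plus two applications of dominated convergence, and requires no external estimate relating $\phi$ to power functions — whereas the paper's route reuses the classical $L^p$ density result wholesale at the cost of the Gustavsson--Peetre bound and a Hölder step to reduce $\|\cdot\|_{\alpha_2}$ to $\|\cdot\|_{\alpha_1}$ on the bounded domain. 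Both use boundedness of $\Omega$ in an essential way (you via $\phi(2N)\chi_{\Omega'}\in L^1$, the paper via $|\Omega|^{(\alpha_1+\alpha_2)/(\alpha_1\alpha_2)}$). One minor point worth making explicit in your Step 2: the a.e.\ convergence $f_{N,\epsilon}\to f_N$ holds because $f_N\in L^1_{\text{loc}}(\R^n)$, so the mollifications converge at every Lebesgue point of $f_N$, which is a.e.
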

\begin{proof}	
	The proof is analogous to the $L^p$ case using that convergence in mean and convergence in norm are the same for a $\Delta_2$-regular $\phi$. At first, we show that simple functions are dense in $K^\phi$:\\
	Since $\phi(|\textbf{u}|)\in L^1$, we can find an increasing sequence of simple functions with $\int\phi(|\textbf{u}_n|)\nearrow \int\phi(|\textbf{u}|)$ by the definition of the Lebesgue integral. Since $\phi(|\textbf{u}_n|)\geq\phi(|\textbf{u}|)$ almost everywhere we have $\int |\phi(|\textbf{u}|)-\phi(|\textbf{u}_n|)|\rightarrow 0$ and can find a subsequence $\textbf{v}_n$ with $\textbf{v}_n\rightarrow \textbf{u}$ almost everywhere. By the monotone convergence theorem we therefore get $\int \phi(|\textbf{u}-\textbf{v}_n|)\rightarrow 0$.\\
	As we can approximate any simple function by a $C^\infty$-function in every $L^p$-space we can do so in $L^\phi$-spaces as well as we have $\phi(t)\lesssim (t^{\alpha_1}+t^{\alpha_2})\phi(1)$ (see \cite{1218603}) by taking a sequence of $C^\infty$-functions $u_n$ with (w.l.o.g. $\alpha_1>\alpha_2$) $\norm{\textbf{u}_n-\textbf{u}}_{\alpha_1}\rightarrow 0$. Then we get:
	\begin{align*}
		\int_{\Omega}\phi(|\textbf{u}_n-\textbf{u}|)&\lesssim \phi(1)\left(\norm{\textbf{u}_n-\textbf{u}}^{\alpha_1}_{\alpha_1}+\norm{\textbf{u}_n-\textbf{u}}_{\alpha_2}^{\alpha_2}\right)\\
		&\leq\phi(1)\left(\norm{\textbf{u}_n-\textbf{u}}^{\alpha_1}_{\alpha_1}+|\Omega|^{\frac{\alpha_1+\alpha_2}{\alpha_1\alpha_2}}\norm{\textbf{u}_n-\textbf{u}}_{\alpha_1}^{\frac{\alpha_2}{\alpha_1}}\right)\rightarrow 0
	\end{align*}
\end{proof}

\begin{lemma}
	Let $\phi$ be a $\Delta_2$-regular N-function and $\xi_\epsilon$ a standard mollifier. Denote by $\omega^\epsilon$ the outer parallel set of $\omega\Subset\Omega$. Then  for $\omega^{\epsilon}\Subset\Omega$ we have:
	$$\int_\omega \phi(|\textbf{u}_\epsilon|) \leq \int_{\omega^\epsilon}\phi(|\textbf{u}|)$$. 
\end{lemma}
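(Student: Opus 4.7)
The plan is to use Jensen's inequality, exploiting the fact that the mollifier $\xi_\epsilon$ is a non-negative probability density (so $\int \xi_\epsilon = 1$) and that $\phi$ is convex. This is the standard argument that transfers almost verbatim from the $L^p$ case, with convexity of $\phi$ replacing the power in Jensen's inequality.

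First I would write out the mollification explicitly: for $x \in \omega$,
\begin{equation*}
    \textbf{u}_\epsilon(x) = \int \textbf{u}(x-y)\, \xi_\epsilon(y)\, \dy,
\end{equation*}
where the integral is effectively over the support of $\xi_\epsilon$, i.e.\ over $\{|y| \le \epsilon\}$. In particular, $x - y$ ranges over $\omega^\epsilon$, which by assumption is compactly contained in $\Omega$, so everything is well-defined.

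Next I would apply Jensen's inequality. Since $\xi_\epsilon \geq 0$ with $\int \xi_\epsilon(y)\dy = 1$, the measure $\xi_\epsilon(y)\dy$ is a probability measure. First pull the absolute value inside:
\begin{equation*}
    |\textbf{u}_\epsilon(x)| \le \int |\textbf{u}(x-y)|\,\xi_\epsilon(y)\dy.
\end{equation*}
Since $\phi$ is non-decreasing and convex, Jensen's inequality yields
\begin{equation*}
    \phi(|\textbf{u}_\epsilon(x)|) \le \phi\!\left(\int |\textbf{u}(x-y)|\,\xi_\epsilon(y)\dy\right) \le \int \phi(|\textbf{u}(x-y)|)\,\xi_\epsilon(y)\dy.
\end{equation*}

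Finally I would integrate over $\omega$ and swap the order of integration using Tonelli's theorem (the integrand is non-negative, so this is unconditional):
\begin{equation*}
    \int_\omega \phi(|\textbf{u}_\epsilon(x)|)\dx \le \int \xi_\epsilon(y)\!\int_\omega \phi(|\textbf{u}(x-y)|)\dx\,\dy \le \int \xi_\epsilon(y)\!\int_{\omega^\epsilon} \phi(|\textbf{u}(z)|)\dz\,\dy = \int_{\omega^\epsilon}\phi(|\textbf{u}|),
\end{equation*}
where the inclusion $\{x - y : x \in \omega,\, |y| \le \epsilon\} \subset \omega^\epsilon$ justifies the middle step, and the last equality uses $\int \xi_\epsilon = 1$. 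There is no real obstacle here; the only thing to check carefully is the geometry of supports ensuring that $x - y \in \omega^\epsilon$ whenever $x \in \omega$ and $y \in \supp \xi_\epsilon$, which is exactly the definition of the outer parallel set. Note that the $\Delta_2$-condition is not actually invoked in this argument — only convexity and monotonicity of $\phi$ are used.
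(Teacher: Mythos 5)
Your proof is correct and uses essentially the same argument as the paper: apply Jensen's inequality to the convolution (exploiting that $\xi_\epsilon$ defines a probability measure and $\phi$ is convex and non-decreasing), then swap the order of integration via Tonelli/Fubini and use that translates of $\omega$ by at most $\epsilon$ lie in $\omega^\epsilon$. The only cosmetic difference is that the paper writes the convolution as $\int \xi_\epsilon(y-x)\textbf{u}(y)\dy$ and views $\xi_\epsilon(y-x)\dy$ as an $x$-dependent probability measure on $\omega^\epsilon$, while you fix $\xi_\epsilon(y)\dy$ as the probability measure and translate $\textbf{u}$; your observation that the $\Delta_2$-condition is not actually needed for this particular lemma is correct.
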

\begin{proof}
	For $L^1_{\text{loc}}$-functions $\textbf{u}$ we get using $\int \xi=1$:
	\begin{align*}
		&\int_\omega\int_{\omega^\epsilon}\xi_\epsilon(y-x)|\textbf{u}(y)|\dz\dx\\
		\leq&\int_{\omega^\epsilon}\int_{\omega\cap B_\epsilon(y)}\xi_\epsilon(y-x)\dx|\textbf{u}(y)|\dy\leq\int_{\omega^\epsilon}|\textbf{u}(y)|\dy
	\end{align*}
	We now define an $x$-dependent measure via $\text{d}\mu_x=\xi_\epsilon(y-x)\dy$ and note that $\int_{\omega^\epsilon} \text{d}\mu_x=1$. Using Jensen's inequality and the above result with the $L^1_{\text{loc}}$-function $\phi(|\textbf{u}|)$ we get:
	\begin{align*}
		&\int_{\omega}\phi\left(\left\vert\int_{\omega^\epsilon}\xi_\epsilon(y-x)\textbf{u}(y)\dy\right\vert\right)\dx\leq\int_{\omega}\phi\left(\int_{\omega^\epsilon} |\textbf{u}(y)|\text{d}\mu_x\right)\dx\\
		\leq&\int_{\omega}\int_{\omega^\epsilon} \phi\left(|\textbf{u}(y)|\right)\text{d}\mu_x\dx\leq\int_{\omega^\epsilon}\phi(|\textbf{u}(y)|)\dy
	\end{align*}
\end{proof}

\begin{lemma}
	Let $\phi$ be a $\Delta_2$-regular N-function and $\xi_\epsilon$ a standard mollifier. Then for every $\textbf{u}\in L^\phi_{\text{loc}}$ we have $\textbf{u}_\epsilon:=\textbf{u}\ast\xi_\epsilon\rightarrow \textbf{u}$ as $\epsilon\rightarrow 0$.
\end{lemma}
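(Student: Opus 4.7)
The plan is to reduce Luxemburg-norm convergence to modular (integral) convergence via Lemma~\ref{phiconvergence}, and then run the classical three-term $\epsilon/3$ argument using the two immediately preceding lemmas (density of $C^\infty$ in $K^\phi$ on bounded domains, and the pointwise modular bound $\int_\omega \phi(|\textbf{u}_\epsilon|) \le \int_{\omega^\epsilon}\phi(|\textbf{u}|)$). Both ingredients are already in place, so the task is essentially bookkeeping on nested domains.

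Concretely, I would fix $\omega \Subset \Omega$, pick $\epsilon_0 > 0$ small enough that the outer parallel set $\omega^{2\epsilon_0}$ still satisfies $\omega^{2\epsilon_0} \Subset \Omega$, and given $\delta > 0$ invoke the density lemma on the bounded domain $\omega^{2\epsilon_0}$ to obtain $\textbf{v} \in C^\infty(\omega^{2\epsilon_0})$ with $\int_{\omega^{2\epsilon_0}} \phi(|\textbf{u} - \textbf{v}|) < \delta$. By convexity of $\phi$ combined with $\Delta_2(\phi)<\infty$, which gives $\phi(|a+b+c|) \lesssim \phi(|a|)+\phi(|b|)+\phi(|c|)$, for $\epsilon < \epsilon_0$ one splits
\begin{equation*}
\int_\omega \phi(|\textbf{u}_\epsilon - \textbf{u}|) \lesssim \int_\omega \phi(|(\textbf{u}-\textbf{v})_\epsilon|) + \int_\omega \phi(|\textbf{v}_\epsilon - \textbf{v}|) + \int_\omega \phi(|\textbf{v} - \textbf{u}|).
\end{equation*}

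The first summand is at most $\int_{\omega^\epsilon}\phi(|\textbf{u}-\textbf{v}|) < \delta$ by the preceding mollification lemma; the third summand is $<\delta$ by the choice of $\textbf{v}$; and for the middle summand I would use that $\textbf{v}$ is smooth (hence uniformly continuous) on a neighborhood of $\overline{\omega}$, so $\textbf{v}_\epsilon \to \textbf{v}$ uniformly on $\overline{\omega}$, which forces $\int_\omega \phi(|\textbf{v}_\epsilon - \textbf{v}|) \to 0$ by the continuity of $\phi$ and the finiteness of $|\omega|$.

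Sending $\epsilon \to 0$ followed by $\delta \to 0$ yields $\int_\omega \phi(|\textbf{u}_\epsilon - \textbf{u}|) \to 0$, and applying Lemma~\ref{phiconvergence} in the direction (b)$\Rightarrow$(a) upgrades this to $\lVert \textbf{u}_\epsilon - \textbf{u}\rVert_{L^\phi(\omega)} \to 0$, which is the desired local convergence. There is no real obstacle: the whole subtlety is setting up the nested domains $\omega \Subset \omega^{\epsilon_0} \Subset \omega^{2\epsilon_0} \Subset \Omega$ so that the mollification estimate and the density lemma each apply on the appropriate set, after which the argument mirrors the classical $L^p$ case verbatim.
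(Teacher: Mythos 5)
Your proof is correct and structurally the same as the paper's: both run the classical $\epsilon/3$ argument through the density-of-$C^\infty$ lemma and the modular mollification bound $\int_\omega\phi(|\textbf{u}_\epsilon|)\leq\int_{\omega^\epsilon}\phi(|\textbf{u}|)$. The only minor difference is that the paper works directly with the Luxemburg norm (using its exact triangle inequality, so no $\Delta_2$-subadditivity constant and no final modular-to-norm upgrade via Lemma~\ref{phiconvergence}), whereas you run the decomposition at the modular level and convert at the end; both are fine, the paper's is marginally slicker.
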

\begin{proof}
	Take an $\omega\Subset\Omega$. We know that for smooth functions $\textbf{v}$ we have $\textbf{v}_\epsilon\rightarrow \textbf{v}$ locally uniform and therefore also in $\phi$-mean and in the $\phi$-Luxemburg norm. Let $\delta>0$ be fixed. For $\textbf{u}\in L^\phi$ we chose a $\textbf{v}\in C^\infty$ such that $\norm{\textbf{v}-\textbf{u}}_{\phi,\omega^{\epsilon_0}}\leq\frac \delta 3$ for some $\epsilon_0>0$. We also chose $0<\epsilon<\epsilon_0$ small enoug that $\norm{\textbf{v}_\epsilon-\textbf{v}}_{\phi,\omega}\leq\frac\delta 3$holds. Then we get:
	\begin{align*}
		&\norm{\textbf{u}-\textbf{u}_\epsilon}_{\phi,\omega}\leq\norm{\textbf{u}-\textbf{v}}_{\phi,\omega}+\norm{\textbf{v}-\textbf{v}_\epsilon}_{\phi,\omega}+\norm{\textbf{v}_\epsilon-\textbf{u}_\epsilon}_{\phi,\omega}\\
		\leq&\norm{\textbf{u}-\textbf{v}}_{\phi,\omega}+\norm{\textbf{v}-\textbf{v}_\epsilon}_{\phi,\omega}+\norm{\textbf{v}-\textbf{u}}_{\phi,\omega^{\epsilon_0}}<\delta
	\end{align*}
\end{proof}

\begin{lemma}\label{convexintegral}(cf \cite{diening2008fractional} Lemma 20)
	Let $\phi$ be an N-function with $\Delta_2(\{\phi,\phi^\ast\})<\infty$ and $[\textbf{P},\textbf{Q}]_s=s\textbf{P}+(1-s)\textbf{Q}$ as before. Then we have 
	$$\int_0^1\frac{\phi'(|[\textbf{P},\textbf{Q}]_s| )}{|[\textbf{P},\textbf{Q}]_s|}\ds\sim\frac{\phi'(|\textbf{P}|+|\textbf{Q}|)}{|\textbf{P}|+|\textbf{Q}|}$$
\end{lemma}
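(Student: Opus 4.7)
Setting the notation $a := |\textbf{P}|+|\textbf{Q}|$ and assuming without loss of generality that $|\textbf{P}| \geq |\textbf{Q}|$ (so $|\textbf{P}| \geq a/2$), the plan is to handle the two sides of the equivalence separately, working throughout with the equivalent quantity $\phi(t)/t^2$. Indeed, $\Delta_2(\phi)$ gives $\phi'(t) \sim \phi(t)/t$ and hence $\phi'(t)/t \sim \phi(t)/t^2$, so both the integrand and the right-hand side can be rewritten in this form.

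For the lower bound, I would restrict the integral to $s \in [3/4,1]$. The triangle inequality yields $|[\textbf{P},\textbf{Q}]_s| \geq s|\textbf{P}| - (1-s)|\textbf{Q}| \geq (2s-1)|\textbf{P}| \geq |\textbf{P}|/2 \geq a/4$, and trivially $|[\textbf{P},\textbf{Q}]_s| \leq a$. Thus $|[\textbf{P},\textbf{Q}]_s| \sim a$ throughout $[3/4,1]$, so $\Delta_2(\phi)$ forces $\phi(|[\textbf{P},\textbf{Q}]_s|) \sim \phi(a)$, and integrating the integrand over a set of measure $1/4$ produces $\int_0^1 \phi'(|[\textbf{P},\textbf{Q}]_s|)/|[\textbf{P},\textbf{Q}]_s|\,\ds \gtrsim \phi'(a)/a$.

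For the upper bound, $\Delta_2(\phi^\ast)$ (equivalently, $\phi \in \nabla_2$) supplies an exponent $p > 1$ such that $\phi(t) \lesssim (t/a)^p \phi(a)$ for all $t \leq a$. Since $|[\textbf{P},\textbf{Q}]_s| \leq a$, this reduces the upper bound to the estimate $\int_0^1 |[\textbf{P},\textbf{Q}]_s|^{p-2}\,\ds \lesssim a^{p-2}$. The case $p \geq 2$ is immediate from $|[\textbf{P},\textbf{Q}]_s| \leq a$. For $1 < p < 2$, I would expand the quadratic $|[\textbf{P},\textbf{Q}]_s|^2 = m^2 + (s-s_0)^2 |\textbf{P}-\textbf{Q}|^2$ with $s_0 = -\textbf{Q}\cdot(\textbf{P}-\textbf{Q})/|\textbf{P}-\textbf{Q}|^2$ and $m$ the minimum value, yielding $|[\textbf{P},\textbf{Q}]_s| \geq |s-s_0|\,|\textbf{P}-\textbf{Q}|$. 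Hence $\int_0^1 |[\textbf{P},\textbf{Q}]_s|^{p-2}\,\ds \leq |\textbf{P}-\textbf{Q}|^{p-2} \int_0^1 |s-s_0|^{p-2}\,\ds \leq C_p\,|\textbf{P}-\textbf{Q}|^{p-2}$, with $C_p<\infty$ because $p>1$.

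It only remains to convert $|\textbf{P}-\textbf{Q}|^{p-2}$ to $a^{p-2}$, which I would do by splitting on whether $|\textbf{P}-\textbf{Q}| \geq a/4$ or $|\textbf{P}-\textbf{Q}| < a/4$. In the first case, $p-2 < 0$ gives $|\textbf{P}-\textbf{Q}|^{p-2} \lesssim a^{p-2}$ directly. In the second case, combining $|\textbf{P}| - |\textbf{Q}| \leq |\textbf{P}-\textbf{Q}| < a/4$ with $|\textbf{P}|+|\textbf{Q}| = a$ forces $|\textbf{P}|, |\textbf{Q}| \in (3a/8,5a/8)$, so $|[\textbf{P},\textbf{Q}]_s| \geq |\textbf{Q}| - |\textbf{P}-\textbf{Q}| \geq a/8$ for every $s$; the original integrand is then $\sim \phi'(a)/a$ throughout $[0,1]$ by a single application of $\Delta_2(\phi)$, bypassing the polynomial estimate altogether. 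The main obstacle is precisely the subquadratic case, where $s \mapsto |[\textbf{P},\textbf{Q}]_s|$ may pass through (or close to) zero; the quadratic identity combined with the finiteness of $\int_0^1 |s-s_0|^{p-2}\,\ds$ (which crucially uses $p>1$ coming from $\nabla_2$) is what absorbs this potential singularity.
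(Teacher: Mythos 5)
Your proof is correct, and it follows the same broad strategy as the paper (reduce the upper bound to a power integral over the segment), but the two arguments differ in both directions of the equivalence in ways worth noting. For the lower bound, the paper uses Jensen's inequality applied to $\phi$ together with the elementary fact that $\int_0^1 |[\textbf{P},\textbf{Q}]_s|\,\ds \gtrsim |\textbf{P}|+|\textbf{Q}|$; you instead localize to $s\in[3/4,1]$ where $|[\textbf{P},\textbf{Q}]_s|\sim a$, which is more hands-on but equally valid. For the upper bound, the paper pulls out the monotone part by writing $\phi'(t)\sim\rho'(t)t^{1/\theta-1}$ with $\rho$ an N-function obtained from the decomposition $\phi^\theta\sim\rho$ (a structural consequence of $\Delta_2(\phi^\ast)<\infty$), and then asserts $\int_0^1|[\textbf{P},\textbf{Q}]_s|^{1/\theta-2}\,\ds\lesssim(|\textbf{P}|+|\textbf{Q}|)^{1/\theta-2}$ without further detail (deferring to \cite{diening2008fractional}); you instead extract a power $p>1$ directly from the $\nabla_2$ condition, which plays exactly the same role as $1/\theta$. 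Both reductions leave precisely the power integral $\int_0^1|[\textbf{P},\textbf{Q}]_s|^{\alpha}\,\ds$ with $\alpha>-1$, and here your argument is actually more complete than the paper's: the completion-of-the-square identity $|[\textbf{P},\textbf{Q}]_s|^2=m^2+(s-s_0)^2|\textbf{P}-\textbf{Q}|^2$, the uniform bound $\int_0^1|s-s_0|^{\alpha}\,\ds\leq 2/(\alpha+1)$, and the case split on $|\textbf{P}-\textbf{Q}|$ versus $a/4$ together give a self-contained justification of the step the paper leaves to the cited reference. In short, your route is slightly more elementary and fills in a gap the paper glosses over, at the cost of being somewhat longer.
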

\begin{proof}
	Because of $\Delta_2({\phi^\ast})<\infty$ we have (cf \cite{kokilashvili1991weighted} Lemmas 1.2.2 and 1.2.3) a $\theta\in(0,1)$ and an N-function $\rho$ such that $\phi^\theta\sim\rho$ with $\Delta_2(\{\rho,\rho^\ast\})<\infty$ and $\rho'(t)t\sim\rho(t)$ and therefore $\phi'(t)\sim\frac{\phi(t)}{t}\sim\frac{\rho(t)^{\frac 1 \Theta}}{t}\sim\rho'(t)t^{\frac 1 \Theta-1}$. This gives
	\begin{align*}
		\int_0^1\frac{\phi'(|[\textbf{P},\textbf{Q}]_s|)}{|[\textbf{P},\textbf{Q}]_s|}\ds&\lesssim\int_0^1\rho'(|[\textbf{P},\textbf{Q}]_s|)^{\frac 1 \theta}|[\textbf{P},\textbf{Q}]_s|^{\frac 1 \theta -2}\ds\\
		&\leq\left(\rho'(|\textbf{P}|+|\textbf{Q}|)\right)^{\frac 1 \theta}\int_0^1|[\textbf{P},\textbf{Q}]_s|^{\frac 1 \theta -2}\ds\\
		&\lesssim \left(\rho'(|\textbf{P}|+|\textbf{Q}|)\right)^{\frac 1 \theta}(|\textbf{P}|+|\textbf{Q}|)^{\frac 1 \theta -2}\\
		&=\frac{\left(|\textbf{P}|+|\textbf{Q}|)(\rho'(|\textbf{P}|+|\textbf{S}|)\right)^{\frac 1 \theta}}{(|\textbf{P}|+|\textbf{Q}|)^2}\\
		&\sim \frac{\phi'(|\textbf{P}|+|\textbf{Q}|)}{|\textbf{P}|+|\textbf{Q}|}
	\end{align*}
	where we used $(|\textbf{P}|+|\textbf{Q}|)\sim\int_0^1|[\textbf{P},\textbf{Q}]_s|\ds $.\\
	For the other direction we see using $\phi(t)\sim\phi'(t)t$, $|[\textbf{P},\textbf{Q}]_s|\leq|\textbf{P}|+|\textbf{Q}|$ and Jensen's inequality that
	\begin{align*}
		\int_0^1\frac{\phi'(|[\textbf{P},\textbf{Q}]_s|)}{|[\textbf{P},\textbf{Q}]_s|}\ds\gtrsim\int_0^1\frac{\phi(|[\textbf{P},\textbf{Q}]_s|)}{(|\textbf{P}|+|\textbf{Q}|)^2}\geq\frac{\phi\left(\int_0^1|[\textbf{P},\textbf{Q}]_s|\ds\right)}{(|\textbf{P}|+|\textbf{Q}|)^2}
	\end{align*}
	We now use that $\int_0^1|[\textbf{P},\textbf{Q}]_s|\ds\gtrsim c(|\textbf{P}|+|\textbf{Q}|)$ (see for example \cite{acerbi1989regularity}) and use the $\Delta_2$ regularity of $\phi$:
		\begin{align*}
			\int_0^1\frac{\phi'(|[\textbf{P},\textbf{Q}]_s|)}{|[\textbf{P},\textbf{Q}]_s|}\ds\gtrsim\frac{\phi\left(|\textbf{P}|+|\textbf{Q}|\right)}{(|\textbf{P}|+|\textbf{Q}|)^2}\sim\frac{\phi'\left(|\textbf{P}|+|\textbf{Q}|\right)}{|\textbf{P}|+|\textbf{Q}|}
		\end{align*}
\end{proof}

\begin{lemma}\label{psi}
	Let $\phi$ be an N-function satisfying assumption \ref{mainassumption}. Then the associated N-function $\psi$ defined via $\psi'(t)=\sqrt{t\phi'(t)}$ also satisfies assumption \ref{mainassumption} and we have $\psi''(t)\sim\sqrt{\phi''(t)}$
\end{lemma}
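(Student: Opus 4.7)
The plan is to extract an explicit formula for $\psi''$ from the defining identity $\psi'(t)^2 = t\phi'(t)$ and then apply assumption \ref{mainassumption} twice: once to simplify the resulting numerator, and once to re-express the answer in terms of $\phi''$. Concretely, squaring the definition avoids a chain rule through the square root: differentiating $\psi'(t)^2 = t\phi'(t)$ yields $2\psi'(t)\psi''(t) = \phi'(t) + t\phi''(t)$, so
\[
\psi''(t) \;=\; \frac{\phi'(t) + t\phi''(t)}{2\sqrt{t\phi'(t)}}.
\]

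Next I would invoke assumption \ref{mainassumption}, which states $\phi'(t)\sim t\phi''(t)$. The two summands in the numerator are therefore equivalent with constants independent of $t$, so the numerator is $\sim \phi'(t)$, giving
\[
\psi''(t) \;\sim\; \frac{\phi'(t)}{\sqrt{t\phi'(t)}} \;=\; \sqrt{\frac{\phi'(t)}{t}}.
\]
Applying the assumption once more in the form $\phi''(t) \sim \phi'(t)/t$ immediately yields $\psi''(t)\sim\sqrt{\phi''(t)}$, which is the second assertion.

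For the remaining claim that $\psi$ itself satisfies assumption \ref{mainassumption}, I would multiply the previous estimate by $t$: from $\psi''(t)\sim\sqrt{\phi'(t)/t}$ one gets $t\psi''(t)\sim\sqrt{t\phi'(t)}=\psi'(t)$, as required. The only point requiring a bit of care — and the closest thing to an obstacle — is justifying that the equivalence $\phi'(t)+t\phi''(t)\sim\phi'(t)$ is uniform in $t>0$; but since both summands are non-negative and the two-sided bound from assumption \ref{mainassumption} holds with constants independent of $t$, this is immediate. The entire argument collapses to a single differentiation followed by two applications of the structural assumption on $\phi$.
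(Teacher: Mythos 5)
Your argument is correct and is essentially the paper's proof: both compute $\psi''$ from $\psi'=\sqrt{t\phi'(t)}$, use $\phi'(t)\sim t\phi''(t)$ to simplify the numerator $\phi'(t)+t\phi''(t)$ to $\sim\phi'(t)$, and then convert between $\phi'(t)/t$ and $\phi''(t)$ via the same assumption. The only cosmetic difference is ordering (you establish $\psi''\sim\sqrt{\phi''}$ first and then multiply by $t$, while the paper derives $t\psi''\sim\psi'$ first), and your device of differentiating $\psi'(t)^2=t\phi'(t)$ instead of $\psi'$ directly cleanly avoids the square-root chain rule.
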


\begin{proof} We get
	\begin{align*}
		t\psi''(t)=\frac{1}{2\sqrt{t\phi'(t)}}\left(\phi'(t)+t\phi''(t)\right)\sim\sqrt{t\phi'(t)}=\psi'(t)
	\end{align*}
	and use this to show
	\begin{equation*}
		t\psi''(t)\sim\psi'(t)=\sqrt{t\phi'(t)}\sim\sqrt{t^2\phi''(t)}=t\sqrt{\phi''(t)}
	\end{equation*}
\end{proof}

\begin{lemma}\label{shiftedDelta2}
\sloppy	Let $\phi$ be an N-function with $\Delta_2(\{\phi,\phi^\ast\})<\infty$. Then $\Delta_2(\{\phi_\lambda,\phi_\lambda^\ast\}_{\lambda\geq0})$ is bounded uniformly in $\lambda$.
\end{lemma}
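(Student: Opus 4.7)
The plan is to establish two uniform bounds separately: that $\Delta_2(\phi_\lambda)$ is uniformly bounded in $\lambda$, and that $\Delta_2(\phi_\lambda^\ast)$ is uniformly bounded in $\lambda$. Both will rest on the pointwise identity
\[
\phi'_\lambda(t)=\frac{\phi'(\lambda+t)}{\lambda+t}\,t\sim\frac{\phi(\lambda+t)}{(\lambda+t)^{2}}\,t,
\]
where the equivalence $\phi(s)\sim s\phi'(s)$ is available from $\Delta_2(\phi)<\infty$, together with classical growth characterisations of the $\Delta_2$ and $\nabla_2$ conditions.

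For the uniform $\Delta_2(\phi_\lambda)$ bound, I would start from the defining integral and substitute $s=2u$:
\[
\phi_\lambda(2t)=\int_0^{2t}\frac{\phi'(\lambda+s)}{\lambda+s}s\,ds=4\int_0^{t}\frac{\phi'(\lambda+2u)}{\lambda+2u}u\,du.
\]
Then I would prove the pointwise comparison $\phi'(\lambda+2u)/(\lambda+2u)\lesssim\phi'(\lambda+u)/(\lambda+u)$: this follows from $\phi(\lambda+2u)\le\phi(2(\lambda+u))\le\Delta_2(\phi)\,\phi(\lambda+u)$ combined with $(\lambda+2u)^2\ge(\lambda+u)^2$ and the equivalence $\phi'(r)/r\sim\phi(r)/r^2$. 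Integrating yields $\phi_\lambda(2t)\lesssim\phi_\lambda(t)$ with a constant depending only on $\Delta_2(\phi)$, hence independent of $\lambda$.

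For the uniform $\Delta_2(\phi_\lambda^\ast)$ bound I would instead verify the equivalent $\nabla_2$ condition for $\phi_\lambda$, namely the existence of $k>1$ (independent of $\lambda$) with $\phi_\lambda(kt)\ge 2k\,\phi_\lambda(t)$. The key input is the classical fact that $\Delta_2(\phi^\ast)<\infty$ is equivalent to the growth bound $\phi(st)\gtrsim s^{p}\phi(t)$ for some fixed $p>1$ and all $s\ge 1$, $t\ge 0$. Writing $s_\lambda:=(\lambda+ku)/(\lambda+u)\in[1,k]$ and applying this with $t=\lambda+u$ gives $\phi(\lambda+ku)\gtrsim s_\lambda^{p}\phi(\lambda+u)$; dividing by $(\lambda+ku)^2=s_\lambda^{2}(\lambda+u)^2$ produces
\[
\frac{\phi'(\lambda+ku)}{\lambda+ku}\gtrsim s_\lambda^{p-2}\,\frac{\phi'(\lambda+u)}{\lambda+u}\ge\min(1,k^{p-2})\,\frac{\phi'(\lambda+u)}{\lambda+u}.
\]
The substitution $s=ku$ in the defining integral for $\phi_\lambda(kt)$ then gives $\phi_\lambda(kt)\gtrsim k^{\min(p,2)}\phi_\lambda(t)$; since $p>1$, this exponent strictly exceeds $1$, so taking $k$ sufficiently large produces the required factor $2k$ and hence the uniform $\nabla_2$ for $\phi_\lambda$.

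The main obstacle is the second step. The $\Delta_2$ direction is essentially monotone and requires nothing beyond $\lambda+2u\le 2(\lambda+u)$, but the $\nabla_2$ direction must survive both regimes: when $\lambda\gg u$ the shift nearly flattens $\phi_\lambda$ into a quadratic, while when $\lambda\ll u$ the function behaves like $\phi$ itself. The crucial point is that the exponent $\min(p,2)$ appearing in the argument is strictly larger than $1$ in every regime, and this in turn is exactly the quantitative content of $\Delta_2(\phi^\ast)<\infty$ via the $\nabla_2$ growth exponent $p>1$.
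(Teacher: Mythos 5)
Your proof is correct, but it takes a genuinely different route from the paper. The paper's argument is very terse: it invokes as known the uniform equivalence $\phi'_\lambda(t)\,t\sim\phi_\lambda(t)$ (a standard fact for shifted N-functions, cf.\ Diening--Ettwein) together with the doubling $\phi'(2t)\sim\phi'(t)$ and $\lambda+2t\sim\lambda+t$ to read off $\phi'_\lambda(2t)\sim\phi'_\lambda(t)$ pointwise, hence $\Delta_2(\phi_\lambda)$ uniformly, and then dispatches the conjugate case as ``analogous'' --- implicitly via the shifted-conjugate identity $\phi_\lambda^\ast\sim(\phi^\ast)_{\phi'(\lambda)}$, which makes the literal same computation available for $\phi^\ast$. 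Your approach is more self-contained: for $\Delta_2(\phi_\lambda)$ you work directly from the defining integral with a substitution and a pointwise monotone comparison, needing only the unshifted $\phi(r)\sim r\phi'(r)$; and for $\Delta_2(\phi_\lambda^\ast)$ you verify the equivalent $\nabla_2$ condition for $\phi_\lambda$ directly, tracking how the growth exponent $p>1$ coming from $\Delta_2(\phi^\ast)$ survives the shift in both regimes $\lambda\ll t$ and $\lambda\gg t$. What your version buys is transparency about exactly where $\Delta_2(\phi^\ast)<\infty$ is used (only through the $\nabla_2$ exponent $p>1$) and independence from the shifted-conjugate identity; what it costs is length, whereas the paper's argument is a one-liner once the background facts are taken as given. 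Your computation $\phi_\lambda(kt)\gtrsim k^{\min(p,2)}\phi_\lambda(t)$ and the observation that $\min(p,2)>1$ for $p>1$ are correct and suffice to make $k^{\min(p,2)}\ge 2k$ for $k$ large, independently of $\lambda$.
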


\begin{proof}(cf \cite{diening2008fractional} Lemma 23)
	As we have $\phi'_\lambda(t)t\sim\phi_\lambda(t)$ uniformly in $\lambda$ and $\phi'(2t)\sim\phi'(t)$ and $\lambda+2t\sim\lambda+t$ we get
	\begin{align*}
		\phi'_\lambda(2t)=\frac{\phi'(\lambda+2t)}{\lambda+2t}2t\sim\frac{\phi'(\lambda+t)}{\lambda+t}t=\phi'_\lambda(t)
	\end{align*}
	and this proves the claim for $\phi_\lambda$. The proof for $\phi_\lambda^\ast$ is analogous.
\end{proof}

\begin{lemma}\label{rausziehen}
	Let $\phi$ be an N-function with $\Delta_2(\{\phi,\phi^\ast\})<\infty$. Then we have an $\epsilon>0$ depending only on $\Delta_2(\{\phi,\phi^\ast\})$ such that $\phi_\lambda(kt)\lesssim k^{1+\epsilon}\phi_\lambda(t)$ holds for all $0\leq k \leq 1$.
\end{lemma}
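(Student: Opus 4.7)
The plan is to upgrade the trivial convexity bound $\phi_\lambda(kt)\le k\,\phi_\lambda(t)$ (which follows from $\phi_\lambda$ being convex with $\phi_\lambda(0)=0$) to the sharper power $k^{1+\epsilon}$ by exploiting that $\Delta_2(\phi^\ast)<\infty$ encodes genuinely super-linear growth of $\phi$. More precisely, I will use the classical equivalence (see e.g.\ \cite{rao1991theory}) between $\Delta_2(\phi^\ast)<\infty$ and the $\nabla_2$-condition on $\phi$: there exist constants $\ell>1$ and $K>\ell$, depending only on $\Delta_2(\phi^\ast)$, such that
\begin{equation*}
\phi(\ell t)\ge K\,\phi(t)\qquad\text{for all }t\ge 0.
\end{equation*}

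Next I would invoke Lemma \ref{shiftedDelta2}, which guarantees that $\Delta_2(\{\phi_\lambda^\ast\}_{\lambda\ge 0})$ is bounded uniformly in $\lambda$. Since the $\nabla_2$ constants of $\phi_\lambda$ depend only on $\Delta_2(\phi_\lambda^\ast)$, the preceding inequality transfers to the shifted functions with uniform constants: there exist $\ell>1$ and $K>\ell$, independent of $\lambda$, with $\phi_\lambda(\ell t)\ge K\,\phi_\lambda(t)$ for all $t\ge 0$ and all $\lambda\ge 0$. Iterating this, one obtains $\phi_\lambda(\ell^{-n}t)\le K^{-n}\phi_\lambda(t)$ for every $n\in\mathbb{N}_0$ by a straightforward induction substituting $t\mapsto \ell^{-n-1}t$.

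To extract the power estimate, given $0<k\le 1$ I would choose $n\in\mathbb{N}_0$ with $\ell^{-(n+1)}<k\le\ell^{-n}$. Using monotonicity of $\phi_\lambda$,
\begin{equation*}
\phi_\lambda(kt)\le \phi_\lambda(\ell^{-n}t)\le K^{-n}\phi_\lambda(t).
\end{equation*}
Setting $1+\epsilon:=\log K/\log\ell$, which is strictly larger than $1$ because $K>\ell$, one has $K^{-n}=\ell^{-n(1+\epsilon)}<(\ell\, k)^{1+\epsilon}=\ell^{1+\epsilon} k^{1+\epsilon}$, and hence
\begin{equation*}
\phi_\lambda(kt)\lesssim k^{1+\epsilon}\phi_\lambda(t),
\end{equation*}
with an implicit constant depending only on $\ell$ and $K$, i.e.\ only on $\Delta_2(\{\phi,\phi^\ast\})$, as required. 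The boundary case $k=0$ is trivial.

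The main obstacle is the first step: establishing, in a self-contained way, the $\nabla_2$-bound $\phi(\ell t)\ge K\phi(t)$ with $K>\ell$ from the hypothesis $\Delta_2(\phi^\ast)<\infty$. One clean route is via Young's equality $\phi^\ast(\phi'(t))\sim\phi(t)$ (equation \eqref{stern}): differentiating/iterating $\phi^\ast(2s)\le c\phi^\ast(s)$ together with $\phi^\ast(t)\sim t(\phi^\ast)'(t)$ yields $(\phi^\ast)'(2s)\lesssim (\phi^\ast)'(s)$, i.e.\ $(\phi')^{-1}$ is $\Delta_2$, which translates into $\phi'(\ell t)\ge 2\ell\phi'(t)$ for suitable $\ell>1$, and integrating gives $\phi(\ell t)\ge K\phi(t)$ with $K=2\ell>\ell$. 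All constants stay under control by $\Delta_2(\phi^\ast)$, which is exactly what is needed to invoke Lemma \ref{shiftedDelta2} uniformly.
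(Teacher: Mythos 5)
Your argument takes a genuinely different route from the paper's. The paper applies the Kokilashvili--Krbec power decomposition already cited in Lemma \ref{convexintegral}: since $\Delta_2(\phi^\ast)<\infty$, there is $\Theta\in(0,1)$ and an N-function $\rho$ with $\phi^\Theta\sim\rho$, whence convexity of $\rho$ gives $\phi(kt)\sim\rho(kt)^{1/\Theta}\lesssim(k\rho(t))^{1/\Theta}\sim k^{1/\Theta}\phi(t)$, so $\epsilon=\tfrac 1\Theta-1$ works; Lemma \ref{shiftedDelta2} then carries this to $\phi_\lambda$ with uniform constants. You instead use the $\nabla_2$-characterization of $\Delta_2(\phi^\ast)<\infty$ together with a dyadic iteration. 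Your main chain of estimates (the iteration, the choice of $n$ with $\ell^{-(n+1)}<k\le\ell^{-n}$, and $1+\epsilon=\log K/\log\ell$) is correct and yields the claim once the $\nabla_2$-bound is in hand; it is more elementary and self-contained, whereas the paper's is shorter because it delegates the substance to the cited decomposition lemma.

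The one concrete gap is in your final paragraph, where you sketch a self-contained derivation of the $\nabla_2$-bound. From $(\phi')^{-1}(2s)\le c'(\phi')^{-1}(s)$ one can deduce $\phi'(c't)\ge 2\phi'(t)$, but \emph{not} $\phi'(\ell t)\ge 2\ell\phi'(t)$; the latter is false in general. For $\phi(t)=t^p$ with $1<p\le 2$ one has $\phi'(\ell t)=\ell^{p-1}\phi'(t)$ and $\ell^{p-1}<2\ell$ for every $\ell>1$, so no admissible $\ell$ exists, yet the lemma must hold in this range. The fix is to iterate $\phi'((c')^n t)\ge 2^n\phi'(t)$ and integrate over $[\ell t/c',\ell t]$ with $\ell=(c')^n$: using $t\phi'(t)\ge\phi(t)$ one gets $\phi(\ell t)\ge \ell(1-1/c')2^{n-1}\phi(t)$, and since $c'\ge 2$, taking $n=3$ already makes the prefactor exceed $\ell$, giving some $K>\ell$ with constants controlled by $\Delta_2(\phi^\ast)$. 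Alternatively you can simply cite the classical equivalence $\Delta_2(\phi^\ast)<\infty\Leftrightarrow\phi\in\nabla_2$ (Rao--Ren), which is what your second paragraph already relies on; the closing sketch is then unnecessary and, as written, should be dropped or corrected.
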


\begin{proof}(see Lemma 31 in \cite{diening2008fractional})
	Like in the proof of \ref{convexintegral} we have an N-function $\rho$ with $\phi^\Theta\sim\rho$ for a $0<\Theta<1$. Then we get uniformly in $t$ and $k$:
	\begin{equation*}
		\phi(kt)\sim\left(\rho(kt)\right)^{\frac 1 \Theta}\sim k^{\frac 1 \Theta}\phi(t)
	\end{equation*}
	This shows the claim for $\lambda=0$ with $\epsilon=\frac 1 \Theta -1$. As we have $\Delta_2(\{\phi_\lambda,\phi_\lambda^\ast\}_{\lambda\geq0})$ from lemma \ref{shiftedDelta2} the proof for $\phi_\lambda$ is analogous.
\end{proof}

\begin{lemma}\label{shiftedrausziehen}
	Let $\phi$ be an N-function with $\Delta_2(\{\phi,\phi^\ast\})<\infty$. Then we have $\phi_\lambda(\lambda_k)\sim k^2\phi(\lambda)$ uniformly in $0\leq k \leq 1$
\end{lemma}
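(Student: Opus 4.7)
The plan is to unravel the definition $\phi_\lambda(t)=\int_0^t \phi_\lambda'(s)\,ds$ with $\phi_\lambda'(s)=\frac{\phi'(\lambda+s)}{\lambda+s}s$ and exploit the fact that on the interval of integration $[0,\lambda k]$ the argument $\lambda+s$ is comparable to $\lambda$. This reduces the integral to a multiple of $\int_0^{\lambda k}s\,ds$, after which the identity $\lambda\phi'(\lambda)\sim\phi(\lambda)$ delivers the claim.

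More concretely, first I would fix $0\le k\le 1$ and observe that for every $s\in[0,\lambda k]$ we have $\lambda\le\lambda+s\le 2\lambda$. The $\Delta_2$-condition on $\phi$ (which implies $\phi'(2t)\sim\phi'(t)$ via $\phi(t)\sim t\phi'(t)$ established earlier in the paper) then yields
\begin{equation*}
\frac{\phi'(\lambda+s)}{\lambda+s}\sim\frac{\phi'(\lambda)}{\lambda}
\end{equation*}
uniformly in $s\in[0,\lambda k]$ and in $\lambda\ge 0$ (with the degenerate case $\lambda=0$ handled by sending $\lambda\to 0$ or treating it separately via $\phi_0=\phi$ and $\phi(\lambda k)\lesssim k^2\phi(\lambda)$ trivially for $\lambda=0$).

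Next I would insert this equivalence into the definition:
\begin{equation*}
\phi_\lambda(\lambda k)=\int_0^{\lambda k}\frac{\phi'(\lambda+s)}{\lambda+s}\,s\,ds\;\sim\;\frac{\phi'(\lambda)}{\lambda}\int_0^{\lambda k} s\,ds\;=\;\frac{\phi'(\lambda)}{\lambda}\cdot\frac{(\lambda k)^2}{2}\;\sim\;k^{2}\lambda\phi'(\lambda).
\end{equation*}
Finally, applying $\lambda\phi'(\lambda)\sim\phi(\lambda)$ (stated in the paper as a consequence of $\Delta_2(\phi)<\infty$) closes the chain:
\begin{equation*}
\phi_\lambda(\lambda k)\sim k^{2}\phi(\lambda),
\end{equation*}
with implicit constants depending only on $\Delta_2(\{\phi,\phi^\ast\})$.

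There is no real obstacle here; the only point requiring a bit of care is making sure the equivalence $\phi'(\lambda+s)/(\lambda+s)\sim\phi'(\lambda)/\lambda$ is genuinely uniform in both $s\in[0,\lambda k]$ and $\lambda\ge 0$, which is where the $\Delta_2$-hypothesis (rather than mere continuity) is essential. Everything else is a direct computation.
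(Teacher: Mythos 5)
Your proof is correct, and it takes a route that is genuinely different from the paper's, though both rest on the same underlying facts. The paper's proof does not touch the integral: it first invokes the one-line equivalence $\phi_\lambda(t)\sim t\,\phi'_\lambda(t)$ evaluated at $t=k\lambda$ (which relies on Lemma~\ref{shiftedDelta2}, i.e.\ on the uniform bound $\Delta_2(\{\phi_\lambda\}_{\lambda\geq 0})<\infty$, to make the constants $\lambda$-independent), then substitutes the definition $\phi'_\lambda(k\lambda)=\frac{\phi'(\lambda+k\lambda)}{\lambda+k\lambda}k\lambda$ and uses $\lambda+k\lambda\sim\lambda$ together with $\phi'(c\lambda)\sim\phi'(\lambda)$ and $\lambda\phi'(\lambda)\sim\phi(\lambda)$. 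Your version instead returns to the integral definition and pulls the near-constant factor $\frac{\phi'(\lambda+s)}{\lambda+s}\sim\frac{\phi'(\lambda)}{\lambda}$ (uniform over $s\in[0,k\lambda]$ precisely because $\lambda\le\lambda+s\le 2\lambda$) out of $\int_0^{k\lambda}s\,ds$. The two arguments differ in where the ``$\sim$'' is inserted: the paper applies the general $\phi_\lambda(t)\sim t\phi'_\lambda(t)$ equivalence as a black box, while you re-derive its effect directly from the integral, which lets you bypass the explicit appeal to the uniform $\Delta_2$-bound on the shifted family. Your calculation is cleaner in that respect and is a perfectly acceptable alternative; the paper's is shorter once Lemma~\ref{shiftedDelta2} is available. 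Your handling of the degenerate endpoint $\lambda=0$ (where both sides vanish) is also fine.
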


\begin{proof}
	We note that $k\lambda+\lambda\sim\lambda$ and $\phi'(ct)\sim\phi(t)$ because of the $\Delta_2$ condition and estimate
	\begin{equation*}
		\phi_\lambda(k\lambda)\sim k\lambda\phi'_\lambda(k\lambda)=k^2\lambda^2\frac{\phi'(k\lambda+\lambda)}{k\lambda+\lambda}\sim k^2\lambda\phi'(\lambda)\sim k^2\phi(\lambda)
	\end{equation*}
\end{proof}

\begin{theorem}\label{stattheorem1}
	Let $\phi$ be an N-function satisfying assumption \ref{mainassumption} with $\Delta_2(\{\phi,\phi^\ast\})<\infty$ and $\textbf{u}\in W^{1,\phi}_{\text{loc}}(\Omega)$ be a local weak solution to $\Delta_\phi \textbf{u}=0$ on a domain $\Omega\subset\mathbb{R}^n$. Then we have $\textbf{\textbf{V}}(\nabla \textbf{u})\in W_{\text{loc}}^{1,2}(\Omega)$.
\end{theorem}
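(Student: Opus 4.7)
The plan is to run the difference quotient argument used in Lemma \ref{statenergylemma}, but in the simplified form $f\equiv 1$, and to draw the conclusion directly from a uniform $L^2$ bound on difference quotients of $\textbf{V}(\nabla \textbf{u})$ rather than from dominated convergence. This avoids the circularity that would arise from invoking Lemma \ref{statenergylemma} here.

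Concretely, fix a ball $B\Subset\Omega$ and a cutoff $\eta\in C_0^\infty(B')$ with $B\Subset B'\Subset\Omega$, $\eta\equiv1$ on $B$ and $0\le\eta\le1$, and pick $q>2$ such that $\phi(\eta^{q-1}t)\le\eta^q\phi(t)$ (Lemma \ref{rausziehen}). For $|h|$ small enough so that $\supp\eta+he_j\Subset\Omega$, test the weak formulation against
\[
\boldsymbol\zeta=\delta_{j,-h}\!\left(\delta_{j,h}\textbf{u}\,\eta^q\right),
\]
apply discrete integration by parts, sum over $j$, and obtain
\[
0=\sum_{j}\Bigl\langle\delta_{j,h}\textbf{A}(\nabla\textbf{u}),\,\delta_{j,h}\nabla\textbf{u}\,\eta^q\Bigr\rangle
+\sum_{j}\Bigl\langle\delta_{j,h}\textbf{A}(\nabla\textbf{u}),\,\delta_{j,h}\textbf{u}\,q\eta^{q-1}\nabla\eta\Bigr\rangle
=:\mathrm{II}+\mathrm{III}.
\]
By Theorem \ref{phiestimates}(d) applied pointwise to the increments,
\[
\mathrm{II}\sim \dashint_{B'}|\delta_h\textbf{V}(\nabla\textbf{u})|^2\eta^q,
\]
so $\mathrm{II}$ controls exactly the quantity whose uniform boundedness in $h$ we wish to establish.

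For $\mathrm{III}$, I would repeat verbatim the chain of estimates leading to \eqref{phiestimate} in the proof of Lemma \ref{statenergylemma} (applied with $f\equiv1$): use Theorem \ref{phiestimates}(b),(c) to replace $\delta_{j,h}\textbf{A}(\nabla\textbf{u})$ by $\phi'_{|\nabla\textbf{u}|}(|\tau_{j,h}\nabla\textbf{u}|)/h$, combine with Theorem \ref{phiestimates}(e), shifted Young's inequality together with \eqref{stern}, Lemma \ref{shiftedrausziehen} to absorb the factor $h|\nabla\eta|\le1$, and Theorem \ref{phiestimates}(d) to return to the squared difference quotients of $\textbf{V}(\nabla\textbf{u})$. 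This yields, for any $\varepsilon>0$,
\[
|\mathrm{III}|\le \varepsilon \dashint_{B'}|\delta_h\textbf{V}(\nabla\textbf{u})|^2\eta^q
+c_\varepsilon\dashint_{B'}\phi(v)\,|\nabla\eta|^2,
\]
after translating the shifted integrands back via the invariance of Lebesgue measure on the enlarged ball containing $\supp\eta\cup(\supp\eta+he_j)$, exactly as in the treatment of $\mathrm{II}'_j$ and $\mathrm{III}'_j$ in Lemma \ref{statenergylemma}. Choosing $\varepsilon$ small and absorbing gives
\[
\dashint_{B'}|\delta_h\textbf{V}(\nabla\textbf{u})|^2\eta^q
\lesssim \dashint_{B'}\phi(v)\,|\nabla\eta|^2.
\]

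The right-hand side is independent of $h$ and finite because $v\in L^\phi_{\text{loc}}(\Omega)$ and $|\nabla\eta|$ is bounded with compact support. Hence, since $\eta\equiv1$ on $B$,
\[
\sup_{0<|h|<h_0}\|\delta_h\textbf{V}(\nabla\textbf{u})\|_{L^2(B)}<\infty,
\]
and by the standard characterisation of Sobolev functions via uniformly bounded $L^2$ difference quotients, $\textbf{V}(\nabla\textbf{u})\in W^{1,2}(B)$. As $B\Subset\Omega$ was arbitrary, the claim follows. The main obstacle is the careful handling of $\mathrm{III}$, in particular keeping the shifts aligned so that the shifted Young inequality produces only $\phi$-terms at the point $x$ and $\textbf{V}$-difference-quotient terms that can be absorbed; everything else is bookkeeping reducible to Theorem \ref{phiestimates} and the Lemmas of the appendix.
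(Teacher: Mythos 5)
There is a genuine gap. Your plan is right in spirit---test with $\boldsymbol\zeta=\delta_{j,-h}(\delta_{j,h}\textbf{u}\,\eta^q)$, extract $\dashint|\delta_h\textbf{V}(\nabla\textbf{u})|^2\eta^q$ from $\mathrm{II}$, estimate $\mathrm{III}$ via Theorem~\ref{phiestimates} and the shifted Young inequality---and you correctly spotted the circularity danger. But the absorption step does not close at the discrete level, and this is precisely where the paper's appendix argument (Lemma~\ref{statlemma1}, Lemma~\ref{Giaquinta}, Theorem~\ref{stattheorem2}) does something essentially different from what you propose.

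Two obstructions make the direct absorption fail. First, the $\varepsilon$-terms produced by \eqref{phiestimate} are not $|\delta_h\textbf{V}|^2$ but averages $\dashint_0^h|\tau_{j,s}\textbf{V}|^2\,\ds/h^2$ and $\dashint_0^h|\tau_{j,h-s}\textbf{V}\circ T_{se_j}|^2\,\ds/h^2$; you cannot dominate $|\tau_{j,s}\textbf{V}|^2$ for $s<h$ by $|\tau_{j,h}\textbf{V}|^2$ pointwise or after averaging, since small-scale oscillations may cancel at scale $h$. The paper removes the $s$-versus-$h$ mismatch by integrating the inequality in $h$ over $(0,\lambda)$ (see the identity at the end of the proof of Lemma~\ref{statlemma1}), so that both sides become $\dashint_0^\lambda\int|\tau_s\textbf{V}|^2$. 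Second---and this is the part you cannot fix by bookkeeping---the translated integrands live on an enlarged domain: after undoing $T_{se_j}$, the $\varepsilon$-term sits on $4Q$ (or your $B'\supset B$), not on the same set as the left side. An $\varepsilon$-term on a strictly larger set cannot be absorbed directly. In Lemma~\ref{statenergylemma} the paper sidesteps this only because it takes $h\to 0$ first, which is legitimate there precisely because $\textbf{V}(\nabla\textbf{u})\in W^{1,2}_{\text{loc}}$ is \emph{already known} from Theorem~\ref{stattheorem1}; using the same passage here would be circular, as you noted. The paper's actual resolution is the Giaquinta--Modica/hole-filling Lemma~\ref{Giaquinta}: iterate the inequality across a dyadic chain of cubes so that the small $\varepsilon$ beats the geometric accumulation coming from enlarging the domain at each step. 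That iteration is not ``bookkeeping reducible to the appendix lemmas''; it is the missing idea. Without it, the uniform bound $\sup_{|h|<h_0}\|\delta_h\textbf{V}(\nabla\textbf{u})\|_{L^2(B)}<\infty$ that you want does not follow from your displayed inequality.
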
	

We proceed like in \cite{diening2008fractional} and begin by showing the following
\begin{theorem}\label{stattheorem2} 
	Let $\textbf{u}$ be a local weak solution of $\Delta_\phi \textbf{u}=0$ on $\Omega$. For a cube $Q$ with side-length $R$ and $5Q\Subset\Omega$ we have the inequality:
	\begin{equation}\label{stattheorem2equation}
		\dashint_Q|\tau_h \textbf{V}(\nabla \textbf{u})|^2\dx\lesssim\frac{|h|^2}{R^2}\dashint_{5Q}|\textbf{V}(\nabla \textbf{u})|^2\dx
	\end{equation}
\end{theorem}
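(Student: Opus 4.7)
The plan is to test the weak equation $\int \mathbf{A}(\nabla \mathbf{u})\cdot \nabla \zeta\,\dx = 0$ with $\zeta = \tau_{-h}(\eta^q\,\tau_h \mathbf{u})$, where $\eta\in C_0^\infty(2Q)$ satisfies $\eta\equiv 1$ on $Q$ and $|\nabla\eta|\lesssim R^{-1}$, and $q$ is chosen (via Lemma \ref{rausziehen}) so that $\phi(\eta^{q-1} t)\leq \eta^q \phi(t)$. For $|h|\leq R$ the test function is supported in $4Q\Subset 5Q\Subset\Omega$, hence admissible. After the standard discrete integration by parts for the finite-difference operator this becomes
\begin{equation*}
0=\int \tau_h \mathbf{A}(\nabla \mathbf{u})\cdot \nabla(\eta^q\,\tau_h \mathbf{u})\,\dx \ =:\ \mathrm{I}+\mathrm{II},
\end{equation*}
where $\mathrm{I}=\int \eta^q\,\tau_h\mathbf{A}(\nabla \mathbf{u})\cdot\tau_h\nabla\mathbf{u}\,\dx$ and $\mathrm{II}=\int q\eta^{q-1}\,\tau_h\mathbf{A}(\nabla\mathbf{u})\cdot(\nabla\eta\otimes\tau_h\mathbf{u})\,\dx$.

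By Theorem \ref{phiestimates}(d) I immediately obtain $\mathrm{I}\sim\int \eta^q\,|\tau_h \mathbf{V}(\nabla \mathbf{u})|^2\,\dx$, so the proof reduces to controlling $|\mathrm{II}|$ by $\varepsilon\,\mathrm{I}$ plus $(|h|/R)^2\int_{5Q}|\mathbf{V}(\nabla\mathbf{u})|^2$. From Theorem \ref{phiestimates}(b)--(c) I have the pointwise bound $|\tau_h \mathbf{A}(\nabla \mathbf{u})|\lesssim \phi'_{|\nabla\mathbf{u}|}(|\tau_h \nabla \mathbf{u}|)$. The crucial manipulation, modelled on the shifted Young block in the proof of Lemma \ref{statenergylemma}, is first to \emph{change the shift} via Theorem \ref{phiestimates}(e) by inserting the intermediate point $|\nabla\mathbf{u}(x+sh)|$, and then to apply Young's inequality in the shifted N-function $\phi_{|\nabla\mathbf{u}(\cdot+sh)|}$ (whose $\Delta_2$-constants are uniform in the shift by Lemma \ref{shiftedDelta2}) using the duality $(\phi_\lambda)^\ast(\phi'_\lambda(s))\sim\phi_\lambda(s)$. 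Averaging over $s\in[0,1]$ and combining $|\tau_h\mathbf{u}(x)|\leq |h|\int_0^1|\nabla\mathbf{u}(x+sh)|\,ds$ with $|\nabla\eta|\lesssim R^{-1}$ produces terms of the form $\phi_{|\nabla\mathbf{u}(x+sh)|}\!\bigl(\tfrac{|h|}{R}|\nabla\mathbf{u}(x+sh)|\bigr)$, which by Lemma \ref{shiftedrausziehen} equal $(|h|/R)^2\phi(|\nabla\mathbf{u}(x+sh)|)\sim(|h|/R)^2|\mathbf{V}(\nabla\mathbf{u})(x+sh)|^2$.

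After Fubini and the change of variables $y=x+sh$, the resulting integrals are controlled by $(|h|/R)^2\int_{5Q}|\mathbf{V}(\nabla\mathbf{u})|^2\,\dy$, while the $\varepsilon$-terms arising from the same Young step are of shape $\varepsilon\int\eta^q|\tau_{h'}\mathbf{V}(\nabla\mathbf{u})|^2$ with $h'\in\{sh,(1-s)h\}$; a further averaging in $s$ lets me absorb these into $\mathrm{I}$ up to an arbitrarily small constant. Dividing by $|Q|$ yields \eqref{stattheorem2equation}. The main obstacle is precisely the interplay of shift-change and scaling: one must use Theorem \ref{phiestimates}(e) to arrange the Young step so that the shift and the argument are both equal to $|\nabla\mathbf{u}(x+sh)|$, so that Lemma \ref{shiftedrausziehen} delivers the $k^{2}=(|h|/R)^2$ factor rather than merely the generic $k^{1+\varepsilon}$ coming from Lemma \ref{rausziehen}. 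The remaining technicalities --- justifying the discrete integration by parts, the Fubini step, and the absorption of cross-terms --- are standard and mirror the corresponding passages in the proof of Lemma \ref{statenergylemma}.
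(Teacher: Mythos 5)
Your outline of the Young / shift-change manipulation for $\mathrm{II}$ is exactly what the paper does in Lemma~\ref{statlemma1} (via \eqref{phiestimate}), and your observation that Theorem~\ref{phiestimates}(e) plus Lemma~\ref{shiftedrausziehen} must be arranged so that the shift and the argument coincide is correct. But the final absorption step is wrong, and this is where the genuine mathematical content of the proof lies. After Young's inequality, the $\varepsilon$-terms are of the schematic form
\begin{equation*}
\varepsilon\int_{2Q}\dashint_0^h\bigl|\tau_{s}\mathbf{V}(\nabla\mathbf{u})\bigr|^2\,\mathrm{d}s\,\dx
\quad\text{and}\quad
\varepsilon\int_{2Q}\dashint_0^h\bigl|\tau_{h-s}\mathbf{V}(\nabla\mathbf{u})\circ T_{se_j}\bigr|^2\,\mathrm{d}s\,\dx,
\end{equation*}
and after the translation change of variables they live on $4Q$, not on $\operatorname{supp}\eta\subset 2Q$. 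These are \emph{not} comparable to $\mathrm{I}=\int\eta^q|\tau_h\mathbf{V}(\nabla\mathbf{u})|^2$: the difference increment ranges over $s\in(0,h)$ rather than being fixed at $h$, and the domain of integration is strictly larger than the domain on the left. Averaging in $s$ does nothing to close this gap --- it just reproduces the same averaged quantity --- so there is no way to ``absorb these into $\mathrm{I}$ up to an arbitrarily small constant'' as you claim.

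The paper resolves this in two stages that your proposal omits. First (Lemma~\ref{statlemma1}), one averages the whole inequality in $h$ over $(0,\lambda)$, using the elementary estimate $\dashint_0^\lambda\frac{h}{\lambda}\dashint_0^h|g(s)|\,\mathrm{d}s\,\mathrm{d}h\leq\dashint_0^\lambda|g(s)|\,\mathrm{d}s$; this makes the quantity on both sides literally the same averaged difference quotient, but on the mismatched domains $Q$ and $4Q$. Second, and crucially, one invokes the Giaquinta--Modica iteration lemma (Lemma~\ref{Giaquinta}, quoted from \cite{diening2008fractional}), a covering argument that promotes an inequality of the form ``average on $Q$ $\leq$ $\varepsilon\cdot$average on $4Q$ $+$ good term'' to a self-improved bound on a single cube $Q_0$ in terms of $5Q_0$, and only then does one remove the average in $\lambda$. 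Without this lemma the proof does not close: the $\varepsilon$-term cannot be hidden, because it is supported where the cutoff is not. You should state and prove (or cite) a Giaquinta--Modica type lemma, average the estimate in $h$ first, and only then invoke it; the direct absorption you describe fails.
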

The proof is split into two parts
\begin{lemma}\label{statlemma1}
	Let $u$ be a local weak solution of $\Delta_\phi \textbf{u}=0$ on $\Omega$. For a cube $Q$ with side-length $R$ and $4Q\Subset\Omega$ we have the inequality:
	\begin{equation}
		\dashint_0^\lambda\int_{Q}|\tau_s V(\nabla \textbf{u})|^2\dx\lesssim\epsilon\dashint_0^\lambda\int_{4Q}|\tau_s \textbf{V}(\nabla \textbf{u})|^2\dx\text{d}\lambda+c_\epsilon\frac{\lambda^2}{R^2}\int_{4Q}\phi(|\nabla \textbf{u}|)\dx
	\end{equation}
\end{lemma}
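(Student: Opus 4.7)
The plan is to derive the estimate by testing the equation against a translation-shifted function and then absorbing bad terms. Because $\textbf{u}$ is a local weak solution of $\Delta_\phi\textbf{u}=0$, the translate $\textbf{u}(\cdot+s)$ also solves the equation on the shifted domain, so subtracting the two weak formulations against a common test function $\boldsymbol{\zeta}$ gives
$$\int\tau_s\textbf{A}(\nabla\textbf{u})\cdot\nabla\boldsymbol{\zeta}\,\dx=0$$
whenever $\supp\boldsymbol{\zeta}$ lies in both domains. I would pick $\boldsymbol{\zeta}:=(\tau_s\textbf{u})\,\eta^q$, where $\eta\in C_0^\infty(2Q)$ satisfies $\eta\equiv 1$ on $Q$ and $|\nabla\eta|\lesssim R^{-1}$, and $q\geq 2$ is the exponent supplied by Lemma \ref{rausziehen}. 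The product rule then yields
$$\int\tau_s\textbf{A}(\nabla\textbf{u})\cdot\tau_s\nabla\textbf{u}\,\eta^q\,\dx=-q\int\tau_s\textbf{A}(\nabla\textbf{u})\cdot(\tau_s\textbf{u}\otimes\nabla\eta)\,\eta^{q-1}\,\dx.$$

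By Theorem \ref{phiestimates}(d) the left side is equivalent to $\int|\tau_s\textbf{V}(\nabla\textbf{u})|^2\eta^q\,\dx$. On the right, Theorem \ref{phiestimates}(b),(c) yield the pointwise bound $|\tau_s\textbf{A}(\nabla\textbf{u})|\lesssim\phi'_{|\nabla\textbf{u}|}(|\tau_s\nabla\textbf{u}|)$; a Young inequality with the shifted N-function $\phi_{|\nabla\textbf{u}|}$, combined with \eqref{stern} applied to $\phi_{|\nabla\textbf{u}|}$ and with Lemma \ref{shiftedrausziehen} to pull the factor $\eta^{q-1}$ inside, then gives
$$|\tau_s\textbf{A}(\nabla\textbf{u})|\,|\tau_s\textbf{u}|\,|\nabla\eta|\,\eta^{q-1}\lesssim \epsilon\,\eta^q|\tau_s\textbf{V}(\nabla\textbf{u})|^2+c_\epsilon\,\phi_{|\nabla\textbf{u}|}\bigl(|\tau_s\textbf{u}|\,|\nabla\eta|\bigr).$$
After integrating in $s\in(0,\lambda)$ and averaging, the $\epsilon$-contribution on the right matches the $\epsilon$-term on the right of the desired inequality (where it may be enlarged to $4Q$), while the first summand on the left retains the form $\dashint_0^\lambda\int_Q|\tau_s\textbf{V}(\nabla\textbf{u})|^2\,\dx\,\ds$, since $\eta\equiv 1$ on $Q$.

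The main obstacle is transforming the surviving remainder
$$\dashint_0^\lambda\int_{4Q}\phi_{|\nabla\textbf{u}|}\bigl(|\tau_s\textbf{u}|\,|\nabla\eta|\bigr)\,\dx\,\ds$$
into the advertised $\tfrac{\lambda^2}{R^2}\int_{4Q}\phi(|\nabla\textbf{u}|)\,\dx$. For this I would use the fundamental theorem of calculus $|\tau_s\textbf{u}(x)|\leq|s|\int_0^1|\nabla\textbf{u}(x+ts\mathbf{e})|\,dt\lesssim\lambda\dashint_0^1|\nabla\textbf{u}(x+ts\mathbf{e})|\,dt$, then Jensen's inequality applied to the convex function $\phi_{|\nabla\textbf{u}|}$ (whose $\Delta_2$-constants are uniform in the basepoint by Lemma \ref{shiftedDelta2}), and finally Fubini--Tonelli to swap the $s$- and $x$-integrations, absorbing the convex combination into the domain $4Q$ via a change of variables. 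The delicate point is that the basepoint of the shifted N-function depends on $x$ while its argument involves $|\nabla\textbf{u}|$ at the translated location $y=x+ts$; this is handled by the pointwise bound $\phi_\lambda(t)\lesssim\phi(\lambda)+\phi(t)$ together with Lemma \ref{rausziehen} applied to the small parameter $\lambda R^{-1}\leq 1$, which produces the claimed $\lambda^2/R^2$ factor and reduces everything to an integral of $\phi(|\nabla\textbf{u}|)$ over $4Q$.
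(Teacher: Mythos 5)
Your overall scheme — test against $\boldsymbol{\zeta}=(\tau_s\textbf{u})\eta^q$, expand by the product rule, identify the leading term with $|\tau_s\textbf{V}(\nabla\textbf{u})|^2\eta^q$ via Theorem \ref{phiestimates}(d), and bound the cross term by Young's inequality in a shifted N-function — is exactly the paper's skeleton (equation \eqref{all} on $2Q$ with $f\equiv1$, multiplied by $h^2$). The gap is in your final step, where you claim to extract the $\lambda^2/R^2$ factor.

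After Young's inequality with $\phi_{|\nabla\textbf{u}(x)|}$ centered at the basepoint $x$, your remainder is $\phi_{|\nabla\textbf{u}(x)|}\bigl(|\tau_s\textbf{u}(x)|\,|\nabla\eta(x)|\bigr)$. After the fundamental theorem of calculus, the argument involves $|\nabla\textbf{u}|$ at the translated points $x+ts$, which is \emph{not} a multiple of the shift parameter $|\nabla\textbf{u}(x)|$, so Lemma \ref{shiftedrausziehen} — the only tool that yields a genuinely quadratic factor $k^2$ — does not apply. Your fallback, $\phi_\lambda(t)\lesssim\phi(\lambda)+\phi(t)$ together with Lemma \ref{rausziehen}, gives at best a factor $(\lambda/R)^{1+\epsilon'}$, where $\epsilon'$ comes from $\Delta_2(\phi^\ast)$; for $\phi(t)=t^p$ with $1<p<2$ one has $1+\epsilon'=p<2$, so $(\lambda/R)^{1+\epsilon'}$ is strictly larger than $\lambda^2/R^2$ and the claimed bound does not emerge. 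This power matters: Theorem \ref{stattheorem1} divides the resulting estimate by $h^2$, so anything weaker than $h^2/R^2$ would not close. The paper avoids the mismatch by first invoking Theorem \ref{phiestimates}(e) to re-center the shift at the translated point, replacing $\phi'_{|\nabla\textbf{u}|}(|\tau_h\nabla\textbf{u}|)$ by $\phi'_{|\nabla\textbf{u}\circ T_{se_j}|}(|\tau_{j,h-s}\nabla\textbf{u}\circ T_{se_j}|)+\phi'_{|\nabla\textbf{u}\circ T_{se_j}|}(|\tau_s\nabla\textbf{u}|)$; after Young's inequality the remainder then has the form $\phi_{|\nabla\textbf{u}\circ T_{se_j}|}\bigl(h|\nabla\eta|\,|\nabla\textbf{u}\circ T_{se_j}|\bigr)$ — argument a small multiple of the shift parameter — to which Lemma \ref{shiftedrausziehen} applies cleanly. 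That same re-centering also produces the two translated $\textbf{V}$-terms which live on $4Q$ rather than $Q$ and so cannot all be absorbed; they are precisely what survives as the $\epsilon$-term on the right of the lemma's inequality, whereas your version as written would have nothing to play that role.
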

\begin{proof}	
	We take equation \ref{all} on $2Q$ and $f\equiv 1$, multiply with $h^2$ and take the $C^\infty$ function $\eta$ with $\chi_Q\leq\eta\leq\chi_{2Q}$ and $|\nabla \eta|<R^{-1}$. We get
	
	\begin{align}
		0&=\langle \textbf{A}(\nabla \textbf{u}),\nabla (\tau_{j,-h}(\tau_{j,h}\textbf{u}\eta^q))\rangle=	\langle\tau_{j,h}\textbf{A}(\nabla \textbf{u}),\nabla(\delta_{j,h}\textbf{u}\eta^q)\nonumber\\
	&=\langle \delta_{j,h}\textbf{A}(\nabla \textbf{u}),\delta_{j,h}\nabla \textbf{u}\eta^q+\delta_{j,h}\textbf{u} q\eta^{q-1}\nabla \eta\rangle=\text{I}+\text{II}\label{all2}
	\end{align}
	Like in \ref{I} we get 
	\begin{equation}\label{I2}
		\text{I}\sim\dashint_{2Q}|\tau_{j,h}\textbf{V}(\nabla \textbf{u})|^2\eta^q\dx\geq \dashint_{Q}|\tau_{j,h}\textbf{V}(\nabla \textbf{u})|\dx
	\end{equation}
	and in analogy to \ref{IIIbeg} we get
	\begin{align}
		\text{II}\lesssim&\int_{2Q}\dashint_0^h\eta^{q-1}\phi'_{|\nabla \textbf{u}|}(|\tau_{j,h}\nabla \textbf{u}|)|\nabla \textbf{u}\circ T_{se_j}|\,h|\nabla \eta|\ds\nonumber\\
		\leq&\int_{2Q}\dashint_0^h \eta^{q-1}\frac{h}{R}\phi'_{|\nabla \textbf{u}|}(|\tau_{j,h}\nabla \textbf{u}|)|\nabla \textbf{u}\circ T_{se_j}|\ds\label{II2}
	\end{align}
	Replacing the factor $h$ by $\lambda$ and and $|\nabla\eta|$ by $R^{-1}$ in \ref{phiestimate} we get the inequality
	\begin{align*}
		&\eta^{q-1}\phi'_{|\nabla \textbf{u}|}(|\tau_h\nabla \textbf{u}|)|\nabla \textbf{u}\circ T_{se_j}|\,\frac{\lambda}{R}\nonumber\\
		\lesssim&\epsilon\eta^q|\tau_{j,h-s}\textbf{V}(\nabla \textbf{u})\circ T_{se_j}|^2+\epsilon\eta^q|\tau_{j,s} \textbf{V}(\nabla \textbf{u})|^2+c_\epsilon \frac{\lambda^2}{R^2}\phi\left(|\nabla \textbf{u}\circ T_{se_j}|\right)
	\end{align*}
	Putting this in \ref{II2} we get
	\begin{align}
		\text{II}\leq&\epsilon\frac{h}{\lambda}\int_{2Q}\dashint_0^h \eta^q|\tau_{j,h-s}\textbf{V}(\nabla \textbf{u})\circ T_{se_j}|^2\ds\dx\nonumber\\
		+&\epsilon\frac{h}{\lambda}\int_{2Q}\dashint_0^h\eta^q|\tau_{j,s} \textbf{V}(\nabla \textbf{u})|^2\ds\dx+c_\epsilon \frac{\lambda^2}{R^2}\int_{2Q}\dashint_0^h\phi\left(|\nabla \textbf{u}\circ T_{se_j}|\right)\ds\dx\nonumber\\
		\leq&\epsilon\frac{h}{\lambda}\int_{2Q}\dashint_0^h |\tau_{j,h-s}\textbf{V}(\nabla \textbf{u})\circ T_{se_j}|^2\dx\nonumber\\
		+&\epsilon\frac{h}{\lambda}\int_{2Q}\dashint_0^h|\tau_{j,s} \textbf{V}(\nabla \textbf{u})|^2\ds\dx+c_\epsilon \frac{\lambda^2}{R^2}\int_{2Q}\dashint_0^h\phi\left(|\nabla \textbf{u}\circ T_{se_j}|\right)\ds\dx\label{II3}
	\end{align}
	We now note for a general $f\in L^1_{\text{loc}}$ and $s<R$
	
	\begin{align*}
		&\int_{2Q}\dashint_0^h |(f\circ T_s)(x)|\ds\dx\\
		=&\dashint_0^h \int_{\mathbb{R}^n}\chi_{2Q}(x)|(f\circ T_s)(x)|\dx\ds\\
		=&\dashint_0^h \int_{\mathbb{R}^n}\underbrace{(\chi_{2Q}\circ T_{-s})}_{\leq\chi_{4Q}(x)}(x)|f(x)|\dx\ds\\
		\leq&\int_{4Q}\dashint_0^h |(f)(x)|\ds\dx\\
	\end{align*}
	and
	\begin{align*}
		&\int_{2Q}\dashint_0^h|\left(\tau_{h-s}f\circ T_s\right)(x)|\ds\dx\\
		=&\int_{2Q}\dashint_0^h|\left(\tau_{s}f\circ T_{h-s}\right)(x)|\ds\dx\\
		=&\dashint_0^h\int_{\mathbb{R}^n}\underbrace{(\chi_{2Q}\circ T_{s-h})(x)}_{\leq\chi_{4Q}(x)}|\left(\tau_{s}f\right)(x)|\ds\dx\\
		\leq&\int_{4Q}\dashint_0^h|\left(\tau_{s}f\right)(x)|\ds\dx\\
	\end{align*}
	Putting those 2 estimates in \ref{II3} and putting it with \ref{I2} in \ref{all2} we get
	\begin{align}
		\dashint_{Q}|\tau_{j,h}\textbf{V}(\nabla \textbf{u})|^2\dx\leq\epsilon\frac{h}{\lambda}\int_{4Q}\dashint_0^h |\tau_{j,s}\textbf{V}(\nabla \textbf{u})|^2\dx+c_\epsilon \frac{\lambda^2}{R^2}\int_{4Q}\left(|\nabla \textbf{u}|\right)\dx\label{II4}
	\end{align}
	 We note that we get for any $L^1$-function $g$:
	\begin{align*}
		&\dashint_0^\lambda\frac{h}{\lambda}\dashint_0^h|g(s)|\ds\text{d}h=\frac{1}{\lambda^2}\int_0^1\int_0^1\chi_{(0,h)}(s)\chi_{(0,\lambda)}(h) |g(s)|\ds\text{d}h\\
		=&\frac{1}{\lambda^2}\int_0^1\int_0^1\chi_{(s,\lambda)}(h)\chi_{(0,\lambda)}(s) |g(s)|\ds\text{d}h=\dashint_0^\lambda\frac 1 \lambda \int_s^\lambda\text{d}h|g(s)|\ds\\
		\leq&\dashint_0^\lambda|g(s)|\ds
	\end{align*}
	Integrating \ref{II4} via $\dashint_0^\lambda\text{d}h$ proves lemma \ref{statlemma1}.
\end{proof}
To conclude the proof of theorem \ref{stattheorem2} we need a lemma from \cite{diening2008fractional}:
\begin{lemma}\label{Giaquinta}
	Let $\gamma_1$, $\gamma_2$ functions such that $\gamma_i(R,h)$ is non decreasing in $h$ and $\frac{h}{R}$. Let $f\in L^2_{\text{loc}}(\Omega)$ and $g_i\in L^2_{\text{loc}}(\Omega)$ be functions such that the following statement is true: For every $\epsilon>0$ there is a $c_\epsilon>0$ such that for every cube $Q$ with side length $R$ and $4Q\Subset\Omega$ and every $0<h<R$ holds:
	\begin{align}
		\dashint_0^\lambda\int_{Q}|\tau_s f|^2\dx\lesssim\epsilon\dashint_0^\lambda\int_{4Q}|\tau_s f|^2\dx\ds+c_\epsilon\sum_{i=1}^2\gamma_i(R,h)\int_{4Q}g_i\dx
	\end{align}
	Then there exist constants $N_2(n)$ and $c$ such that for every $0<h<\frac {R_0} {10}$ and every cube $Q_0$ with $5Q_0\Subset\Omega$ holds
	\begin{equation}
		\int_{Q_0}|\tau_s f|^2\dx\lesssim c\sum_{i=1}^2\gamma_i(R,h)\int_{5Q_0}g_i\dx
	\end{equation}
\end{lemma}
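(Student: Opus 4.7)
\emph{Proof plan.} The proof is a Giaquinta-style hole-filling iteration combined with a Besicovitch-type covering at each scale. Fix $h < R_0/10$ and introduce nested cubes $Q^{(k)}$ concentric with $Q_0$ of side lengths $R_k = R_0 + 4R_0(1 - 2^{-k})$, so that $Q^{(0)} = Q_0$ and $Q^{(k)} \nearrow 5Q_0$; the gaps $R_{k+1} - R_k = 4R_0 \cdot 2^{-k-1}$ shrink geometrically. At each level $k$, cover $Q^{(k)}$ by a family $\{q_{k,j}\}_j$ of cubes of common side length $r_k \sim R_0 \cdot 2^{-k}$ chosen so that the enlargements $4q_{k,j}$ lie in $Q^{(k+1)}$ and overlap at most $N_2 = N_2(n)$ times (a standard Vitali/Besicovitch construction). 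Write $A_k := \dashint_0^h \int_{Q^{(k)}} |\tau_s f|^2 \dx\ds$.

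Apply the hypothesis on each $q_{k,j}$ (admissible as long as $r_k > h$) and sum over $j$; the bounded overlap yields the recursion
\begin{equation*}
A_k \leq N_2 \epsilon \, A_{k+1} + c_\epsilon N_2 \sum_{i=1}^{2} \gamma_i(r_k, h) \int_{5Q_0} g_i \dx.
\end{equation*}
Setting $q := N_2 \epsilon$ and iterating to the maximal depth $K \sim \log_2(R_0/h)$ permitted by $r_K > h$ gives
\begin{equation*}
A_0 \leq q^K A_K + c \sum_{k=0}^{K-1} q^k \sum_i \gamma_i(r_k, h) \int_{5Q_0} g_i \dx.
\end{equation*}
The monotonicity of $\gamma_i$ in $h/R$, combined with $r_k \sim R_0 \cdot 2^{-k}$, implies $\gamma_i(r_k, h) \lesssim 2^{Ck}\gamma_i(R_0, h)$ for some $C$ depending on the growth exponent of $\gamma_i$. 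Choosing $\epsilon$ small enough that $q \cdot 2^C < 1$ makes the resulting geometric series collapse to $\lesssim \gamma_i(R_0, h)$, while the residual $q^K A_K$ is absorbed using the uniform bound $A_K \lesssim \|f\|_{L^2(5Q_0)}^2$ and the smallness of $q^K$ (which is a fixed power of $h/R_0$).

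To upgrade from the averaged-in-$s$ estimate on $A_0$ to the stated pointwise-in-$h$ bound on $\int_{Q_0} |\tau_h f|^2 \dx$, one applies the triangle inequality $|\tau_h f(x)| \leq |\tau_{h-s} f(x + s e_j)| + |\tau_s f(x)|$, averages over $s \in (0, h)$, and uses translation invariance; transferring the resulting bound to a slightly enlarged cube still contained in $5Q_0$ costs only a fixed rescaling of $R_0$.

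\textbf{Main obstacle.} The delicate point is the balance in the iteration step: the geometric decay $q^K$ must dominate the polynomial growth $2^{Ck}$ of $\gamma_i(r_k, h)$ as the auxiliary cubes shrink, so the freedom to pick $\epsilon$ arbitrarily small (at the cost of a larger $c_\epsilon$) is essential, the required threshold being determined by the growth exponent of $\gamma_i$ in $h/R$. A secondary subtle point is handling the residual $q^K A_K$: since $K$ is capped by the condition $r_K > h$, the factor $q^K$ is only a fixed power of $h/R_0$, and clean absorption into the right-hand side typically exploits the structural relationship $\|f\|^2 \lesssim \int g$ present in the application (as in Theorem \ref{stattheorem2}, where $f = \textbf{V}(\nabla \textbf{u})$ and $g \sim |\textbf{V}(\nabla \textbf{u})|^2$).
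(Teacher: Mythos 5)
The paper does not prove this lemma; the proof block consists of the single citation ``\cite{diening2008fractional}, Lemma 13'', so there is nothing in the text to compare your argument against and I can only judge it on its own merits. Your scaffold (concentric cubes, bounded-overlap cover at each scale, hole-filling recursion) has the right shape, but two steps fail as written.

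First, the bound $\gamma_i(r_k,h)\lesssim 2^{Ck}\gamma_i(R_0,h)$ is not available. You claim it follows from ``the monotonicity of $\gamma_i$ in $h/R$'' combined with ``the growth exponent of $\gamma_i$'', but the hypotheses supply no growth exponent, and the stated monotonicity runs in the \emph{opposite} direction: since $r_k<R_0$ forces $h/r_k>h/R_0$, monotonicity in $h/R$ gives $\gamma_i(r_k,h)\geq\gamma_i(R_0,h)$ with no upper control whatsoever. Your geometric series therefore has no reason to converge, and the whole ``choose $\epsilon$ so that $q\cdot 2^C<1$'' step is vacuous because $C$ is not a quantity determined by the hypotheses. (The remark immediately after the lemma in the paper, that $\gamma_1(N_2R,N_2h)=\gamma_1(R,h)$ is needed to conclude, strongly suggests the correct conclusion carries $N_2$-rescaled arguments $\gamma_i(N_2R_0,N_2h)$; this is another sign that the fine-scale factors $\gamma_i(r_k,h)$ with $r_k\sim R_02^{-k}$ cannot simply be re-expressed at scale $R_0$ by monotonicity alone.)

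Second, you dispose of the residual $q^KA_K$ by invoking ``the structural relationship $\|f\|^2\lesssim\int g$ present in the application (as in Theorem~\ref{stattheorem2} \ldots)''. The lemma is stated for arbitrary $f\in L^2_{\text{loc}}$ and $g_i$, and a general lemma cannot be proved by importing a fact from one of its applications; for generic data there is no such comparison, and your own phrasing (``typically exploits \ldots present in the application'') concedes that the argument does not close in the generality required. Since $K\sim\log_2(R_0/h)$ is capped by the admissibility constraint $r_K>h$, the leftover $q^KA_K$ is a genuine term that your proposal never controls by $\sum_i\gamma_i(R_0,h)\int_{5Q_0}g_i\dx$.

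A smaller gap: the passage from the $s$-averaged quantity $\dashint_0^h\int_{Q_0}|\tau_sf|^2\dx\ds$ to the pointwise estimate on $\int_{Q_0}|\tau_hf|^2\dx$ is only gestured at via the triangle inequality. That step shifts the domain by $se_j$, so it consumes part of the margin between $Q_0$ and $5Q_0$ and must be built into the choice of intermediate cubes, not tacked on afterwards.
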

\begin{proof}
	\cite{diening2008fractional} Lemma 13.
\end{proof}

We are now able to prove theorem \ref{stattheorem2}.
\begin{proof}[Proof of theorem \ref{stattheorem2}]
	From lemma \ref{statlemma1} we know that the assumptions of lemma \ref{Giaquinta} are fulfilled with $f=\textbf{V}(\nabla \textbf{u}) $, $\gamma_1(R,h)=\frac{h^2}{R^2}$, $\gamma_2=0$ and $g_1=\phi(|\nabla \textbf{u}|)$. To conclude the proof we note $\gamma_1(N_2R,N_2h)=\gamma_1(R,h)$ 
\end{proof}

\begin{proof}[Proof of Theorem \ref{stattheorem1}]
	We divide equation \ref{stattheorem2equation} by $h^2$ and get
	\begin{equation*}
		\dashint_Q|\delta_h \textbf{V}(\nabla \textbf{u})|^2\dx\lesssim\frac{1}{R^2}\dashint_{5Q}|\textbf{V}(\nabla \textbf{u})|^2\dx<\infty
	\end{equation*}
	This implies the existence of $\nabla \textbf{V}(\nabla \textbf{u})\in L^2(Q)$ for every Cube $Q$ with $5Q\Subset\Omega$. For any other $\omega\Subset\Omega$ we denote by $R=\text{dist}(\omega,\partial\Omega)$. Take the open covering $\omega\subset\cap_{x\in\omega}Q_{\frac{R}{6}}(x)\subset \Omega$ since $\omega$ is compact we have a finite subcovering of cubes $Q_i:=Q_{\frac{R}{6}}(x_i)$, $i=1,...,N$, with $5Q_i\Subset\Omega$. Therefore we have 
	\begin{equation*}
		\dashint_\omega|\delta_h \textbf{V}(\nabla \textbf{u})|^2\dx\lesssim\frac{1}{R^2}\sum_{i=1}^N\dashint_{5Q_i}|\textbf{V}(\nabla \textbf{u})|^2\dx<\infty
	\end{equation*}
\end{proof}

\begin{theorem}\label{instatheorem1}
	Let $\phi$ be an N-function satisfying assumption \ref{mainassumption} and $\textbf{u}\in L^\phi_{\text{loc}}(J\times\Omega,\mathbb{R}^m)\cap C_{\text{loc}}(J,L^2(\Omega,\mathbb{R}^m))$ be a local weak solution to $\Delta_\phi \textbf{u}=\textbf{u}_t$ on a cylindric domain $J\times\Omega\subset\mathbb{R}^{1+n}$ with $v:=|\nabla \textbf{u}|\in L_{\text{loc}}^2(J\times \Omega)\cap L_{\text{loc}}^\phi(J\times \Omega)$. Then we have  $\textbf{V}(\nabla \textbf{u})\in L^2_{\text{loc}}(I,W_{\text{loc}}^{1,2}(\Omega,\mathbb{R}^m))$.	
\end{theorem}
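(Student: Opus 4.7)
The plan is to repeat the spatial difference-quotient argument that gave Theorem~\ref{stattheorem1}, adapting it to the parabolic setting by time-mollifying the equation exactly as in the proof of Lemma~\ref{instatenergylemma}. Concretely, I would aim for the parabolic analog of Theorem~\ref{stattheorem2}: for every cube $Q$ of side length $R$ with $5Q\Subset\Omega$ and every subinterval $[t_1,t_2]\Subset J$,
\begin{equation*}
\int_{t_1}^{t_2}\!\int_Q |\tau_h\textbf{V}(\nabla \textbf{u})|^2\dx\dt \lesssim \frac{|h|^2}{R^2}\int_{t_1'}^{t_2'}\!\int_{5Q}\bigl(\phi(v)+v^2\bigr)\dx\dt.
\end{equation*}
Dividing by $|h|^2$, sending $h\to 0$, and covering as in the final step of Theorem~\ref{stattheorem1} then gives $\nabla_x \textbf{V}(\nabla \textbf{u})\in L^2([t_1,t_2]\times\omega)$ for every $\omega\Subset\Omega$, i.e.\ the claimed $L^2_{\text{loc}}(J,W^{1,2}_{\text{loc}}(\Omega))$ regularity.

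To obtain this estimate I would test the time-mollified equation
\begin{equation*}
\dashint_Q [\textbf{A}(\nabla\textbf{u})]_\sigma\cdot\nabla\textbf{g}\dz = \dashint_Q \textbf{u}_\sigma\,\partial_t\textbf{g}\dz
\end{equation*}
with $\textbf{g}=\tau_{j,-h}\bigl(\tau_{j,h}\textbf{u}_\sigma\,\rho(t)\,\eta^q(x)\bigr)$, where $\chi_Q\le\eta\le\chi_{2Q}$, $|\nabla\eta|\lesssim 1/R$, and $\rho\in C^\infty_0(J)$ is a smooth approximation of $\chi_{[t_1,t_2]}$. Shifting the outer $\tau_{j,-h}$ onto the other factor via the discrete integration-by-parts identity $\int f\,\tau_{j,-h}g=\int(\tau_{j,h}f)\,g$ and summing in $j$, the left-hand side splits exactly as in \eqref{all} into a term bounded below by $\sim|\tau_{j,h}\textbf{V}(\nabla\textbf{u})|^2\eta^q$ (via Theorem~\ref{phiestimates}(d)) and a cutoff term controlled by the Young-inequality computation~\eqref{phiestimate} with $h|\nabla\eta|$ replaced by $h/R$. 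For the right-hand side, the spatial shift plus integration by parts in $t$ produces
\begin{equation*}
\dashint_Q \tau_{j,h}\textbf{u}_\sigma\cdot\partial_t(\tau_{j,h}\textbf{u}_\sigma)\,\rho\,\eta^q\dz = -\tfrac12\dashint_Q|\tau_{j,h}\textbf{u}_\sigma|^2\rho'\eta^q\dz,
\end{equation*}
and the Jensen-type bound $|\tau_{j,h}\textbf{u}|^2\le h^2\!\dashint_0^h|\nabla\textbf{u}\circ T_{se_j}|^2\ds$ delivers the extra $h^2 v^2$ contribution on the right of the target inequality.

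Sending $\sigma\to 0$ by exactly the arguments of Lemma~\ref{instatenergylemma} (using $[\textbf{A}(\nabla\textbf{u})]_\sigma\to\textbf{A}(\nabla\textbf{u})$ in $L^{\phi^\ast}(Q)$ and $\nabla\textbf{u}_\sigma\to\nabla\textbf{u}$ in $L^\phi(Q)$) yields the parabolic version of Lemma~\ref{statlemma1},
\begin{equation*}
\dashint_{t_1}^{t_2}\!\int_Q|\tau_{j,h}\textbf{V}(\nabla\textbf{u})|^2\dx\dt\lesssim \epsilon\dashint_{t_1}^{t_2}\!\int_{4Q}|\tau_{j,h}\textbf{V}(\nabla\textbf{u})|^2\dx\dt+c_\epsilon\frac{h^2}{R^2}\!\dashint_{t_1}^{t_2}\!\!\int_{4Q}\!\bigl(\phi(v)+v^2\bigr)\dx\dt,
\end{equation*}
after which Lemma~\ref{Giaquinta} applied spatially, with $t$ treated as a parameter and the time-integrated right-hand side playing the role of the datum $g_i$, absorbs the $\epsilon$-term and yields the target estimate. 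I expect the main obstacle to be the simultaneous handling of the spatial difference quotients, the time mollification, and the integration by parts in $t$: because $\partial_t\textbf{u}$ is not available weakly, all of these manipulations have to be carried out while the $\sigma$-smoothing is still active, and the limits $\sigma\to 0$ and $h\to 0$ must only be taken at the end of the argument, exactly in the spirit of Lemma~\ref{instatenergylemma}.
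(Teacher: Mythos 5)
Your proposal is correct and follows essentially the same route as the paper's: a parabolic version of Lemma~\ref{statlemma1} obtained by testing the time-mollified equation with the spatial difference-quotient test function (the paper simply reuses the already established inequality~\eqref{instatdiscretenergy} with $f\equiv 1$ and multiplies by $h^2$, rather than re-deriving the $\sigma\to 0$ passage), followed by the Giaquinta--Modica lemma~\ref{Giaquinta} and a covering argument. The only organizational difference is that the paper takes its cutoff $\eta$ as a genuine space-time function and applies Lemma~\ref{Giaquinta} on cubes in $\mathbb{R}^{1+n}$ (so the $v^2$-datum carries the weight $\lambda^2/R$, not $\lambda^2/R^2$), whereas you separate $\rho(t)\eta(x)$ and keep the time interval fixed while iterating spatially; both bookkeepings lead to $\textbf{V}(\nabla\textbf{u})\in L^2_{\text{loc}}(J,W^{1,2}_{\text{loc}}(\Omega))$.
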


In analogy to the elliptic case we divide the proof.
\begin{lemma}\label{instatlemma1}
	Let $\phi$ be an N-function satisfying assumption \ref{mainassumption} and $\textbf{u}\in L^\phi_{\text{loc}}(J\times\Omega,\mathbb{R}^m)\cap C_{\text{loc}}(J,L^2(\Omega,\mathbb{R}^m))$ be a local weak solution to $\Delta_\phi \textbf{u}=\partial_t{\textbf{u}}$ on a cylindric domain $J\times\Omega\subset\mathbb{R}^{1+n}$ with with $v:=|\nabla \textbf{u}|\in L_{\text{loc}}^2(J\times\Omega)\cap L_{\text{loc}}^\phi(J\times\Omega)$. Then for every space time cube $Q$ of sidelength $R$ with $4Q\Subset J\times\Omega$ and every $\lambda<R$ we have
	\begin{align}\label{instatlemma1equation}
		&\dashint_0^\lambda\int_{Q}|\tau_s \textbf{V}(\nabla \textbf{u})|^2\dz\leq\epsilon\dashint_0^\lambda\int_{4Q}|\tau_s \textbf{V}(\nabla \textbf{u})|^2\dx\text{d}s\nonumber\\
		+c_\epsilon&\left(\frac{\lambda^2}{R^2}\int_{4Q}\phi(|\nabla \textbf{u}|)\dz+\frac{\lambda^2}{R}\int_{4Q}|\nabla \textbf{u}|^2\dz\right)
	\end{align}
\end{lemma}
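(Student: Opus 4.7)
The plan is to mimic the proof of Lemma \ref{statlemma1} direction by direction while accommodating the extra term produced by the parabolic weak formulation. Fix a spatial direction $j\in\{1,\dots,n\}$, a shift $h<R$, and a cutoff $\eta\in C_0^\infty(4Q)$ with $\chi_Q\leq\eta\leq\chi_{2Q}$, $|\nabla\eta|\lesssim R^{-1}$ and $|\partial_t\eta|\lesssim R^{-1}$ (admissible because $Q$ has side length $R$ in every coordinate, time included). Since $\textbf{u}$ is not weakly differentiable in $t$, I would first mollify it in time, $\textbf{u}_\sigma=\textbf{u}\ast\xi_\sigma$, and use the test function $\textbf{g}_\sigma=\tau_{j,-h}(\tau_{j,h}\textbf{u}_\sigma\,\eta^q)$ in the $\sigma$-regularised identity
\[
\int_{4Q}[\textbf{A}(\nabla\textbf{u})]_\sigma\cdot\nabla\textbf{g}_\sigma\,\dz=\int_{4Q}\textbf{u}_\sigma\,\partial_t\textbf{g}_\sigma\,\dz,
\]
exactly as in the proof of Lemma \ref{instatenergylemma}.

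A discrete integration by parts in the $j$-direction, followed by the $\sigma\to 0$ passage (whose $L^\phi$/$L^{\phi^\ast}$ bookkeeping is identical to the one carried out in Lemma \ref{instatenergylemma}), turns this into
\[
\underbrace{\int_{4Q}\tau_{j,h}\textbf{A}(\nabla\textbf{u})\cdot(\tau_{j,h}\nabla\textbf{u})\,\eta^q\dz}_{\text{I}_j}+\underbrace{\int_{4Q}\tau_{j,h}\textbf{A}(\nabla\textbf{u})\cdot(\tau_{j,h}\textbf{u})\,q\eta^{q-1}\nabla\eta\,\dz}_{\text{II}_j}=\int_{4Q}\tau_{j,h}\textbf{u}\cdot\partial_t(\tau_{j,h}\textbf{u}\,\eta^q)\,\dz.
\]
For $\text{I}_j$ and $\text{II}_j$ I would simply quote the elliptic computation: Theorem \ref{phiestimates}(d) gives $\text{I}_j\gtrsim\int_Q|\tau_{j,h}\textbf{V}(\nabla\textbf{u})|^2\dz$, and the pointwise estimate \ref{phiestimate} combined with Theorem \ref{phiestimates}(b),(e), Young's inequality and the shifted-$\Delta_2$ Lemmas \ref{shiftedDelta2}--\ref{shiftedrausziehen} absorb $\text{II}_j$, after averaging $\dashint_0^\lambda\mathrm{d}h$, into $\epsilon\,\dashint_0^\lambda\int_{4Q}|\tau_{j,s}\textbf{V}(\nabla\textbf{u})|^2\dz\,\ds+c_\epsilon\tfrac{\lambda^2}{R^2}\int_{4Q}\phi(v)\dz$, reproducing the elliptic right hand side.

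The only genuinely new contribution is the parabolic right hand side. Expanding $\partial_t(\tau_{j,h}\textbf{u}\,\eta^q)$ and performing a single integration by parts in $t$ (legitimate at the mollified level and preserved in the limit since $\textbf{u}\in C_{\text{loc}}(J,L^2_{\text{loc}}(\Omega))$) collapses it to $\tfrac12\int_{4Q}|\tau_{j,h}\textbf{u}|^2\,\partial_t(\eta^q)\,\dz$. Using $|\partial_t\eta^q|\lesssim R^{-1}$, the Cauchy-Schwarz bound $|\tau_{j,h}\textbf{u}(x,t)|^2\leq h\int_0^h|\partial_j\textbf{u}(x+se_j,t)|^2\,\ds$, Fubini and the translation trick $\int_{2Q}(f\circ T_{se_j})\leq\int_{4Q}f$ already used in Lemma \ref{statlemma1} yield $\lesssim\tfrac{h^2}{R}\int_{4Q}|\nabla\textbf{u}|^2\dz$; averaging over $h\in(0,\lambda)$ produces precisely the $c_\epsilon\tfrac{\lambda^2}{R}\int_{4Q}|\nabla\textbf{u}|^2\dz$ summand in the statement. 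The main obstacle here is not any single estimate but the bookkeeping of the $\sigma\to 0$ passage: one has to run the entire convergence argument of Lemma \ref{instatenergylemma} while carrying the spatial difference quotient $\tau_{j,h}$ through every factor and checking uniformity in $h$ of the dominating functions. Once that is done, the result is merely the sum of the elliptic output of Lemma \ref{statlemma1} and the cheap parabolic contribution just analysed.
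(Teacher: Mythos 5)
Your proposal is correct and follows the same route as the paper: mollify in time, test with $\tau_{j,-h}(\tau_{j,h}\textbf{u}\,\eta^q)$ (equivalently take $f\equiv1$ in the parabolic energy identity), control the elliptic part exactly as in Lemma \ref{statlemma1}, and bound the parabolic remainder by $\frac{h^2}{R}\int_{4Q}|\nabla\textbf{u}|^2$ before averaging in $h$. The only difference is bookkeeping: the paper shortcuts by multiplying the already-established inequality \eqref{instatdiscretenergy} by $h^2$ and discarding the sup term, so the $\sigma\to0$ passage you propose to redo is simply inherited from Lemma \ref{instatenergylemma}, and your pointwise Cauchy--Schwarz/Fubini estimate for $\int|\tau_h\textbf{u}|^2$ replaces the paper's (equivalent, if more loosely stated) difference-quotient convergence argument.
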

\begin{proof}
	We multiply the inequality \ref{instatdiscretenergy} on $2Q$ by $h^2$, set $f\equiv 1$ and discard $\text{II'}$:	
	\begin{align*}
		\int_{2Q'}\tau_{h,j}\textbf{A}(\nabla \textbf{u})\nabla (\tau_{h,j}\textbf{u}\rho(t)\eta^q)\dz\leq h^2\int_{2Q}  H(|\delta_h \textbf{u}|)\partial_t\left(\eta^q\right)\dz
	\end{align*}
	We now take $\eta\in C_0^\infty$ such that $\chi_Q\leq\eta\leq\chi_{2Q}$, $|\nabla \eta|\leq R^{-1}$ and $|\partial_t \eta|\leq R^{-1} $ and get

\begin{align}
		\text{I''}:=\int_{Q}\tau_{h,j}\textbf{A}(\nabla \textbf{u})\nabla (\tau_{h,j}\textbf{u})\dz\leq R^{-1}\int_{2Q}  |\tau_h \textbf{u}|^2\dz=:\text{II''}\label{instatdiscretenergy2}
	\end{align}
Since $\textbf{u}\in L^2(W^{1,2})$ we have $\frac 1 {h^2} \int_{2Q}  |\tau_h \textbf{u}|^2\dz\rightarrow\int_{2Q}|\nabla \textbf{u}|^2\dz$ and therefore for every $\lambda>h$

$$  \text{II''}\leq \frac 2 R {h^2}\int_{2Q}|\nabla \textbf{u}|^2\dz\leq 2{\lambda^2}\int_{4Q}|\nabla \textbf{u}|^2\dz $$

We then handle $\text{I''}$ like in lemma \ref{statlemma1} and take $\max \{c_\epsilon,2\}$ as our new $c_\epsilon$ to get the result of lemma \ref{instatlemma1}
\end{proof}

\begin{proof}[Proof of theorem \ref{instatheorem1}]
	We use the Giaquinta-Modica type lemma \ref{Giaquinta} with $\gamma_1(R,\lambda)=\frac{\lambda^2}{R^2}$, $\gamma_2=\frac{\lambda^2}{R}$, $g_1=\phi(|\nabla \textbf{u}|)$ and $g_2=|\nabla \textbf{u}|^2$. We get
	
	\begin{equation*}
		\dashint_Q|\tau_\lambda \textbf{V}(\nabla \textbf{u})|^2\dz\leq c\left(\frac{\lambda^2}{R^2}\dashint_{5_Q}\phi(|\nabla \textbf{u}|)+\frac{\lambda^2}{R}\dashint_{5Q}|\nabla \textbf{u}|^2\right)
	\end{equation*}
	Dividing this by $\lambda^2$ leads to 
	\begin{equation*}
		\dashint_Q|\delta_\lambda \textbf{V}(\nabla \textbf{u})|^2\dz\leq c\left(\frac{1}{R^2}\dashint_{5_Q}\phi(|\nabla \textbf{u}|)+\frac{1}{R}\dashint_{5Q}|\nabla \textbf{u}|^2\right)<\infty
	\end{equation*}
	which implies $\textbf{V}(\nabla \textbf{u})\in W^{1,2}(Q)$ for every cube $Q$ with $5Q\Subset\Omega$. The same simple covering argument as in the elliptic case leads to $\textbf{V}(\nabla \textbf{u})\in W^{1,2}_{\text{loc}}(\Omega)$
\end{proof}

\newpage

\bibliographystyle{unsrt}

\begin{thebibliography}{10}

\bibitem{hilbert1900mathematische}
David Hilbert.
\newblock {Mathematische probleme}.
\newblock {\em Nachrichten von der Gesellschaft der Wissenschaften zu
  G{\"o}ttingen, Mathematisch-Physikalische Klasse}, 1900:253--297, 1900.

\bibitem{de1957sulla}
Ennio {De Giorgi}.
\newblock {Sulla differenziabilitae l{\rq}analiticita delle estremali degli
  integrali multipli regolari}.
\newblock {\em Mem. Accad. Sci. Torino. Cl. Sci. Fis. Mat. Nat.}, 3(3):25--43,
  1957.

\bibitem{nash1958continuity}
John Nash.
\newblock {Continuity of solutions of parabolic and elliptic equations}.
\newblock {\em American Journal of Mathematics}, pages 931--954, 1958.

\bibitem{MR0159139}
J{\"u}rgen Moser.
\newblock {A {H}arnack inequality for parabolic differential equations}.
\newblock {\em Comm. Pure Appl. Math.}, 17:101--134, 1964.

\bibitem{uhlenbeck1977regularity}
Karen Uhlenbeck.
\newblock {Regularity for a class of non-linear elliptic systems}.
\newblock {\em Acta Mathematica}, 138(1):219--240, 1977.

\bibitem{acerbi1989regularity}
Emilio Acerbi and Nicola Fusco.
\newblock {Regularity for minimizers of non-quadratic functionals: the case 1<
  p< 2}.
\newblock {\em Journal of mathematical analysis and applications},
  140(1):115--135, 1989.

\bibitem{evans1982new}
Lawrence~C Evans.
\newblock {A new proof of local C 1, $\alpha$ regularity for solutions of
  certain degenerate elliptic pde}.
\newblock {\em Journal of Differential Equations}, 45(3):356--373, 1982.

\bibitem{marcellini2006nonlinear}
Paolo Marcellini and Gloria Papi.
\newblock {Nonlinear elliptic systems with general growth}.
\newblock {\em Journal of Differential Equations}, 221(2):412--443, 2006.

\bibitem{diening2009everywhere}
Lars Diening, Bianca Stroffolini, and Anna Verde.
\newblock {Everywhere regularity of functionals with $\varphi$-growth}.
\newblock {\em manuscripta mathematica}, 129(4):449--481, 2009.

\bibitem{breit2011general}
Dominic Breit, Bianca Stroffolini, and Anna Verde.
\newblock {A general regularity theorem for functionals with $\varphi$-growth}.
\newblock {\em Journal of Mathematical Analysis and Applications},
  383(1):226--233, 2011.

\bibitem{esteban1986equation}
Juan~R Esteban and Juan~L V{\'a}zquez.
\newblock {On the equation of turbulent filtration in one-dimensional porous
  media}.
\newblock {\em Nonlinear Analysis: Theory, Methods \& Applications},
  10(11):1303--1325, 1986.

\bibitem{dibenedetto1993degenerate}
Emmanuele DiBenedetto.
\newblock {\em {Degenerate parabolic equations}}.
\newblock Springer, 1993.

\bibitem{dibenedetto1984regularity}
Emmanuele DiBenedetto and Avner Friedman.
\newblock {Regularity of solutions of nonlinear degenerate parabolic systems}.
\newblock {\em J. reine angew. Math}, 349:83--128, 1984.

\bibitem{choe1991holder}
Hi~Jun Choe.
\newblock {H{\"o}lder regularity for the gradient of solutions of certain
  singular parabolic systems}.
\newblock {\em Communications in partial differential equations},
  16(11):1709--1732, 1991.

\bibitem{acerbi2007gradient}
Emilio Acerbi, Giuseppe Mingione, et~al.
\newblock {Gradient estimates for a class of parabolic systems}.
\newblock {\em Duke Mathematical Journal}, 136(2):285--320, 2007.

\bibitem{lieberman2006holder}
Gary~M Lieberman.
\newblock {H{\"o}lder regularity for the gradients of solutions of degenerate
  parabolic systems}.
\newblock {\em Ukr. Math. Bull}, 3:352--373, 2006.

\bibitem{adams2003sobolev}
Robert~A Adams and John~JF Fournier.
\newblock {\em {Sobolev spaces}}, volume 140.
\newblock Academic press, 2003.

\bibitem{rao1991theory}
Malempati~Madhusudana Rao and Zhong~Dao Ren.
\newblock {\em {Theory of Orlicz spaces}}.
\newblock M. Dekker New York, 1991.

\bibitem{diening2008fractional}
Lars Diening and Frank Ettwein.
\newblock {Fractional estimates for non-differentiable elliptic systems with
  general growth}.
\newblock In {\em {Forum Mathematicum}}, volume~20, pages 523--556, 2008.

\bibitem{1218603}
Jan Gustavsson and Jaak Peetre.
\newblock {Interpolation of Orlicz spaces}.
\newblock 60(1):33--59, 1977.

\bibitem{kokilashvili1991weighted}
Vakhtang~Mikhalovich Kokilashvili and Miroslav Krbec.
\newblock {\em {Weighted inequalities in Lorentz and Orlicz spaces}},
  volume~57.
\newblock World Scientific, 1991.

\end{thebibliography}

\end{document}